\title{Epsilon-regularity for the Brakke flow with boundary}
\author{Carlo Gasparetto\thanks{Dipartimento di Matematica, Università di Pisa, Pisa (Italy). \href{mailto:carlo.gasparetto@dm.unipi.it}{carlo.gasparetto@dm.unipi.it} }}
\date{}
\numberwithin{equation}{section}						
\renewenvironment{proof}[1][\proofname]{{\par\medskip\noindent\bfseries #1. }}{\qed\par}		
\newcommand{\RR}{{\mathbb R}}
\newcommand{\NN}{{\mathbb N}}
\newcommand{\Lc}{\mathcal{L}}
\newcommand{\Mc}{\mathcal{M}}
\newcommand{\Vc}{\mathcal{V}}
\newcommand{\Dc}{\mathcal{D}}
\newcommand{\Hc}{\mathcal{H}}
\newcommand{\Mb}{\mathbf{M}}
\newcommand{\Nb}{\mathbb{N}}
\newcommand{\weakly}{\rightharpoonup}
\newcommand{\graph}{\mathop{\rm graph}}
\newcommand{\compact}{\subset\subset}
\newcommand{\rest}{\llcorner}
\DeclareMathOperator{\supp}{supp}
\DeclareMathOperator{\dist}{dist}
\DeclareMathOperator{\Int}{Int}
\DeclareMathOperator{\trace}{trace}
\DeclareMathOperator{\dive}{div}
\DeclareMathOperator{\Span}{span}
\newcommand{\push}{_\sharp}
\newcommand{\e}{\mathbf{e}}
\newcommand{\eps}{\varepsilon}		
\let\oldphi\phi						
\newcommand{\phin}{\oldphi}
\renewcommand{\phi}{\varphi}
\let\emptyset\varnothing
\newcommand{\de}{\partial}
\newcommand{\compl}{c}			
\newcommand{\pt}[1]{\left( #1 \right) }
\newcommand{\norm}[1]{\left\| #1 \right\|}
\newcommand{\iprod}[2]{\left\langle #1 ; #2 \right\rangle}			
\renewcommand{\hat }{\widehat}
\renewcommand{\tilde }{\widetilde}
\newcommand{\Tint}[1]{(#1]}			
\newcommand{\BF}{\mathcal{BF}_m}			
\newcommand{\IM}{\mathcal{IM}}		
\newcommand{\Var}{\mathrm{Var}}		
\newcommand{\hk}{\Psi}			
\DeclareMathOperator{\Clos}{Clos}		
\newcommand{\Flat}{\mathcal{F}}			
\newcommand{\sprod}{\colon}
\newcommand{\trasp}{T}
\newcommand{\Gr}{\mathrm{Gr}}
\DeclareMathOperator{\mdr}{mdr}
\newcommand{\haus}{\Hc}				
\DeclareMathOperator{\osc}{osc}
\newcommand{\bound}{K}
\newcommand{\Id}{I}
\newcommand{\pball}{\mathcal{B}}
\crefname{subsection}{Subsection}{Subsections}		
\theoremstyle{plain}
\newtheorem{theorem}{Theorem}[section]
\newtheorem{proposition}[theorem]{Proposition}
\newtheorem{lemma}[theorem]{Lemma}
\newtheorem{corollary}[theorem]{Corollary}
\newtheorem{definition}[theorem]{Definition}
\theoremstyle{definition}
\newtheorem{remark}[theorem]{Remark}
\begin{document}
\maketitle
\begin{abstract}
	We prove that, if a Brakke flow with boundary is close enough to a stationary half-plane with density one, then it is $C^{1,\alpha}$. Our approach is based on viscosity techniques introduced by Savin in the context of elliptic equations. The same techniques can be used to give a proof of Brakke's (interior) regularity theorem which is alternative to the original one.
\end{abstract}
\section{Introduction}
In this paper, we state and prove a Brakke-type theorem for the mean curvature flow with boundary, that is a flow of $m$-dimensional surfaces in $\RR^d$ so that at every point the normal component of the velocity is equal to the mean curvature and the boundary is fixed. A weak notion of such a flow has been recently introduced in \cite{whitebdry} by using integral varifolds, as devised by Brakke \cite{brakke}. The objects in question are called \textit{integral Brakke flows with boundary}.

In short, given a $(m-1)$-dimensional submanifold $\Gamma$, an integral Brakke flow with boundary $\Gamma$ is a collection $\{V_t\}_{t\in I}$ of $m$-dimensional integral varifolds with the constraint that the first variation of $V_t$ is a measure whose singular part with respect to $||V_t||$ behaves like $\haus^{m-1}\rest\Gamma$ and the varifolds satisfy an evolution equation that encodes the information on the velocity. A precise definition will be given in \Cref{sec:notation}.

The main result of this paper (\Cref{thm:final}) is that, if a Brakke flow in a ball of radius $1$ is close enough (in some appropriate topology) to a unit-density half plane (which is a stationary solution to the mean curvature flow with a prescribed straight boundary), then the Brakke flow becomes smooth up to the boundary in a smaller ball and after some fixed waiting time. Roughly stated, the main result is the following
\begin{theorem}[$\eps$-regularity]\label{thm:main_intro}
	Let $\Gamma$ be a $C^{1,\alpha}$, $(m-1)$-dimensional submanifold of $B_1$ and let $\{V_t\}_{t\in[-\Lambda,0]}$ be an integral Brakke flow with boundary $\Gamma$ in $B_1\times[-\Lambda,0]$. Assume the following:
	\begin{enumerate}
		\item \label{ass:disap} $0\in\supp||V_0||$;
		\item \label{ass:mass} at time $t=-\Lambda$, the mass measure $||V_t||$ is close to that of a $m$-dimensional half disk;
		\item \label{ass:flat}there exists a half-plane $S^+$ such that, for every $t\in[-\Lambda,0]$:
		\begin{equation*}
		\supp \norm{V_t}\subset\{x\in \RR^d\colon \dist(x,S^+)\le\eps\}.
		\end{equation*}
	\end{enumerate}
	If $\eps$ and $\Lambda$ are small enough, then there exist small constants $\eta,\beta$ and a family $\{N_t\}_{t\in\Tint{-\eta^2,0}}$ of $C^{1,\beta}$ surfaces with boundary $\Gamma$ such that
	\begin{equation*}
	\supp \norm{V_t} \cap B_\eta = N_t
	\end{equation*}
	for every $t\in\Tint{-\eta^2,0}$.
\end{theorem}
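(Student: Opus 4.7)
The plan is to adapt Savin's viscosity/improvement-of-flatness method to this parabolic boundary setting. After rotating, I take $S^+ = \{x_m \ge 0,\ x_{m+1} = \dots = x_d = 0\}$ and view $\Gamma$ as a $C^{1,\alpha}$ perturbation of $\de S^+$. The overall picture is that hypotheses \ref{ass:disap}--\ref{ass:flat} trap $\supp\|V_t\|$ in a thin tubular neighborhood of $S^+$, and I promote this flatness to smoothness by an iterative dichotomy which, at each dyadic scale, either produces a tilted half-plane with respect to which the flow is flatter, or leads to a contradiction.

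First I would construct one-sided caloric barriers. Smooth mean curvature flows with boundary $\Gamma$ that are near $S^+$ are, to leading order, graphs over $S^+$ of solutions to the heat equation on the half-plane with Dirichlet data matching $\Gamma$ on $\de S^+$. Using such graphs as smooth test surfaces in the Brakke inequality (with an appropriate cutoff) gives a one-sided avoidance principle, so that $\supp\|V_t\|$ cannot cross the caloric barriers from either side inside $B_1$. Combined with the boundary version of Huisken's monotonicity formula (whose boundary error is controlled by the $\haus^{m-1}$-mass of $\Gamma$), this yields a uniform density upper bound, prevents mass concentration on $\Gamma$, and translates the flatness hypothesis into an $L^\infty$ control of the signed distance from $S^+$ by a caloric function.

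The core of the argument is the improvement-of-flatness dichotomy: there exist universal $\rho, C \in (0,1)$ and $\eps_0 > 0$ such that, whenever the flow is $\eps$-flat ($\eps < \eps_0$) with respect to some half-plane $S^+$ in the parabolic cylinder $B_1 \times [-1, 0]$, then it is $\rho^{1+\alpha}\eps$-flat in $B_\rho \times [-\rho^2, 0]$ with respect to a tilted half-plane $S^+_1$ still having boundary on $\Gamma$, with $|S^+ - S^+_1| \le C\eps$. I would prove this by compactness and contradiction: a failing sequence of flows, rescaled by $\eps_k^{-1}$ in the normal directions, converges (thanks to the barriers above) to the graph of a bounded viscosity solution $u$ of the heat equation on $S^+$ with zero Dirichlet data on $\de S^+$; Schauder theory gives $u(x,t) = \ell(x) + O(|x|^{1+\alpha} + |t|^{(1+\alpha)/2})$ near the origin, and the tilt $\ell$ (which must vanish on $\de S^+$) defines the new half-plane $S^+_1$, contradicting the assumed failure. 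Iterating at scales $\rho^k$ yields a summable tilt series and hence a unique tangent half-plane at $0$ with Hölder modulus; running the same argument at every boundary point near $0$ and combining with interior Brakke regularity produces the $C^{1,\beta}$ sheets $N_t$, while the identity $\|V_t\| \rest B_\eta = \haus^m \rest N_t$ follows from the unit-density constraint \ref{ass:mass}.

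The main obstacle is the compactness step inside the dichotomy: one needs the rescaled supports to converge to the graph of a \emph{single-valued} caloric function, not to a more complicated limiting Brakke flow. Two pathologies must be ruled out — loss of mass, handled by the monotonicity formula together with \ref{ass:mass}, and formation of additional sheets sharing the same tangent half-plane, excluded via the constancy theorem announced in the abstract, which forces the limiting varifold on $S^+$ to have constant integer density, hence density exactly one by \ref{ass:mass}. Once these are controlled, the viscosity characterization of the limit $u$ follows from the barrier construction of the second paragraph, and the Savin iteration runs without further surprises.
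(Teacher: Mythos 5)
Your overall architecture coincides with the paper's: a Savin-type improvement of flatness proved by contradiction and compactness, identification of the blow-up limit with a caloric graph on the half-plane, Schauder estimates, iteration, and the constancy theorem to pass from the support to the measure. The genuine gap is in the compactness step. You assert that the supports, rescaled by $\eps_k^{-1}$ in the normal directions, converge to the graph of a \emph{single-valued} continuous caloric function ``thanks to the barriers,'' and that extra sheets are excluded ``via the constancy theorem.'' Neither tool delivers this. The $L^\infty$ trapping between barriers and the weak convergence $M^j_t\weakly\haus^m\rest S^+$ (which is what the constancy theorem plus the mass bound give) only control the picture at scale $O(1)$; they are blind to the possibility of two portions of $\supp\norm{V_t}$ lying over the same point $(x',t)$ at normal distance comparable to $\eps_k$, or to oscillations of order $\eps_k$ in the normal coordinate. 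In that regime the normally rescaled sets $F_{\eps_k}(\Sigma^k)$ would converge in Hausdorff distance to a multivalued or discontinuous object, and the caloric-limit and Schauder steps break down.

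What is needed --- and what the paper supplies --- is a quantitative decay of oscillations (a Harnack-type inequality) at scale $\eps$. One first shows (\Cref{cor:max_princ2}) that the ``top sheet'' $u(x',t)=\sup\{|z|\colon(x',z)\in\Sigma_t\}$ is a viscosity subsolution of $\de_t\phi-\Mc^+(D^2\phi)\le0$ for a Pucci maximal operator; Wang's measure estimate for such subsolutions then shows that if one point of $\Sigma$ sits at normal height close to $\eps$, the set where $u$ is close to $\eps$ has almost full $\Lc^{m,1}$-measure; applying this to the distances from two hypothetical points separated by about $2\eps$ in $S^\perp$ forces the Gaussian density to approach $2$, contradicting the propagated form of hypothesis~(2). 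Iterating over dyadic scales yields a uniform H\"older modulus for the rescaled supports (\Cref{prop:holder_STATEMENT}), and this --- not the barriers --- is the actual source of the Arzel\`a--Ascoli compactness. Your ``one-sided caloric avoidance'' is at best the touching-point maximum principle in disguise; without the measure estimate it cannot be upgraded to the equicontinuity you need. (The boundary barrier argument you describe does appear in the paper, but it only controls the behavior near $\Gamma$.) The remaining steps of your outline --- Schauder for the Dirichlet problem, summable tilts, and the constancy theorem to obtain $\norm{V_t}\rest B_\eta=\haus^m\rest N_t$ --- match the paper.
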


We briefly comment on the assumptions. The key assumptions are \Cref{ass:mass} and \Cref{ass:flat}, which describe how the Brakke flow is close to being a half-plane (with a straight boundary). \Cref{ass:disap}, on the other hand, prevents a \enquote{pathological} behavior of Brakke flows, which is the possibility of a sudden loss of mass (see, for example, \cite[Section 2.3]{Tonegawabook}). The statement and the assumptions will be made more rigorous in Sections \ref{sec:IOF} and \ref{sec:final}


A central point in our work is that, under appropriate assumptions, the support of an integral Brakke flow with boundary satisfies a maximum principle. In order to fix ideas, assume that the support of the flow is the graph of some function $u:\RR^m\to\RR^{d-m}$. Then it can be proved that $|u|$ is a viscosity subsolution (in a suitable sense which we will describe at a later stage) to
\begin{equation*}
\de_t\phi-\Mc^+(D^2\phi)\le0,
\end{equation*} 
where $\Mc^+$ is a Pucci maximal operator.
We may therefore exploit this property to adopt a technique developed by Savin in \cite{savin07} in the framework of elliptic equations and later adapted by Wang in \cite{wangsmall} to parabolic equations, which we now summarize in our case.
The key step in proving \Cref{thm:main_intro} is proving the following \textit{improvement of flatness}:
\begin{proposition}[Improvement of flatness]\label{prop:IOF_intro}
	Under the assumption of \Cref{thm:main_intro}, there exist $\eta>0$ and a half plane $T^+$ close to $S^+$ such that, for every $t\in(-\eta^2,0]$,
	\begin{equation*}
	\supp \norm{V_t}\cap B_\eta\subset\bigg\{x\in \RR^d\colon \dist(x,T^+)\le\frac{\eps}{2}\eta\bigg\}.
	\end{equation*}
\end{proposition}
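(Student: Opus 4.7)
The plan is to argue by compactness and contradiction, following the scheme introduced by Savin for elliptic equations and adapted by Wang to the parabolic setting. Suppose the conclusion fails. Then there exist a fixed admissible $\eta$ and a sequence of integral Brakke flows $\{V_t^k\}_{t\in[-\Lambda,0]}$ on $B_1\times[-\Lambda,0]$ with $C^{1,\alpha}$ boundaries $\Gamma_k$ and reference half-planes $S_k^+$ satisfying \Cref{ass:disap}, \Cref{ass:mass}, \Cref{ass:flat} with flatness parameter $\eps_k\to 0$, and such that for no half-plane $T_k^+$ the improvement of flatness holds at scale $\eta$.

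Before rescaling I would use \Cref{ass:mass} (the mass density sits strictly below the two-sheet threshold $\omega_m$) and \Cref{ass:disap} together with Huisken's monotonicity for Brakke flows with boundary and the flatness assumption \Cref{ass:flat} to conclude that, in a slightly smaller parabolic subcylinder, $\supp\norm{V_t^k}$ is, up to a negligible set, a density-one Lipschitz graph over a portion of $S_k^+$ with small Lipschitz constant. Denote by $u_k\colon S_k^+\to (S_k^+)^\perp$ this graph; then $\abs{u_k}\le\eps_k$ and, by the discussion preceding the proposition and the maximum principle in \Cref{subsec:MAX}, the scalar height $\abs{u_k}$ is a viscosity subsolution of $\de_t\phi-\Mc^+(D^2\phi)\le 0$.

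Set $w_k:=u_k/\eps_k$, so $\abs{w_k}\le 1$. Uniform Hölder estimates for viscosity subsolutions of uniformly parabolic fully nonlinear equations, combined with the Hausdorff convergence of the supports, let us extract a locally uniform limit $w_\infty$ on (a subdomain of) $S^+\times[-\Lambda,0]$. The operator $\Mc^+$ linearizes around the flat tangent as $\eps_k\to 0$, since the tilt of the graph vanishes in the limit, so $w_\infty$ is a classical solution of the heat equation $\de_t w_\infty-\Delta w_\infty=0$. The fixed-in-time boundary $\Gamma_k$, which is $C^{1,\alpha}$ and $\eps_k$-close to $\partial S_k^+$, rescales to a Dirichlet trace of $w_\infty$ on $\partial S^+$; choosing $S_k^+$ so as to minimize the $L^2$-flatness (which can be done without loss of generality) makes this trace vanish to first order at the origin.

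Interior and boundary regularity for the heat equation on the half-space with zero trace then yield an affine approximation $L$ of $w_\infty$ with $\abs{w_\infty-L}\le C\eta^{1+\beta}$ on $B_\eta\times(-\eta^2,0]$. Choosing $\eta$ small enough that $C\eta^{\beta}<1/4$, the half-plane $T_k^+$ obtained by tilting $S_k^+$ by $\eps_k L$ witnesses the required flatness improvement for every large $k$, contradicting the choice of the sequence. The main obstacle is the compactness step: a Brakke flow is only a family of varifolds, so proving the graph representation, excluding higher multiplicity and sudden mass losses, controlling the boundary traces after rescaling, and justifying the degeneration of $\Mc^+$ to the Laplacian all rely on a careful interplay between Huisken's monotonicity for flows with boundary, the density assumption $\tfrac{3}{4}\omega_m$, and the $\Mc^+$-subsolution property inherited from \Cref{subsec:MAX}.
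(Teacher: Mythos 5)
Your overall skeleton (contradiction, compactness, identification of the limit as a caloric function, boundary Schauder estimates) is the same as the paper's, but the three steps you treat as routine are precisely where the work lies, and as written each contains a genuine gap. First, you assert that monotonicity plus the mass bound imply that $\supp\norm{V_t^k}$ is a density-one Lipschitz graph with small Lipschitz constant over $S_k^+$; this is essentially the statement of an Allard/Brakke-type theorem and cannot be assumed --- it is what one is trying to prove. The paper never produces a graph representation before passing to the limit: it works with the space-time track $\Sigma$ as a closed set, defines the (possibly empty, possibly multivalued) vertical slices $u^j(x',t)$, and shows only that the rescaled tracks converge in Hausdorff distance to a single-valued graph. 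Second, your compactness step invokes ``uniform H\"older estimates for viscosity subsolutions''; but $|u_k|$ is only a \emph{sub}solution of $\de_t\phi-\Mc^+(D^2\phi)\le0$, and subsolutions alone satisfy no interior H\"older estimate. The paper's actual mechanism is a Harnack-type inequality (\Cref{lemma:harnack_0}): if the oscillation fails to decay, Wang's measure estimate is applied \emph{twice}, to the distance from each of two well-separated sheets, forcing the Gaussian density to be close to $2$; this contradicts the upper density bound propagated from \eqref{eq:double_dens_0}. This is where the hypotheses \Cref{ass:disap} and \Cref{ass:mass} enter quantitatively, and your proposal does not identify this mechanism.

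Third, the claim that ``$\Mc^+$ linearizes to the Laplacian as the tilt vanishes'' is not correct: $\Mc^+$ is the Pucci operator with fixed ellipticity constants $1/2$ and $2$ and does not degenerate to $\Delta$; moreover the subsolution property gives only one-sided information in the limit. The paper identifies the limit as caloric by a separate argument (\Cref{lemma:visc_sol'}): comparing $u$ with its caloric replacement $h$, lifting the difference to a test function on $\RR^d$, and applying the full varifold maximum principle (\Cref{Maxprinciple}), where the key point is that second derivatives in the $S^\perp$ directions scale like $\eps_j^{-2}$ and dominate any tilted tangent plane. Finally, your treatment of the boundary trace (``minimizing the $L^2$-flatness'') is not how the attachment to $\Gamma$ is controlled; the paper needs a genuine barrier construction (\Cref{prop:barrierSTATEMENT}), sliding a paraboloid-type surface in the $\e_m$ direction and again invoking the maximum principle, to show that $\Sigma$ meets $\Gamma$ with a square-root rate, which is what gives the Dirichlet condition $u((x'',0),t)=g(x'')$ in the limit. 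In short: right scaffolding, but the graph representation, the H\"older compactness, the identification of the limit equation, and the boundary trace all require arguments you have not supplied and which do not follow from the tools you cite.
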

In summary, if the Brakke flow is \enquote{$\eps$-flat} at scale $1$, then it becomes \enquote{$\eta\eps/2$-flat} at scale $\eta$, for some $\eta$ small and universal; from this, proving $C^{1,\alpha}$-regularity is classical.

The proof of \Cref{prop:IOF_intro} is based on a contradiction and compactness argument. Assume one can find a sequence of flatter and flatter Brakke flows for which the conclusion of \Cref{prop:IOF_intro} does not hold. Then appropriate rescalings of the supports of such flows converge in a suitable sense to the graph of a solution to the heat equation. The desired improvement of flatness is a straightforward consequence of classical Schauder estimates. 
The above convergence is achieved via a Harnack-type inequality, in the spirit of \cite{wangsmall}, and a barrier argument that describes the behavior of the Brakke flow near the boundary.

\Cref{thm:main_intro} answers a question left open in \cite[Remark 11.2]{whitebdry}, that is whether an integral Brakke flow with boundary that has a tangent flow which is a unit-density half-plane is smooth in a backward neighborhood. The reader should also compare our results with the regularity theorems proved in \cite{whitebdry}. The latter are proved under the additional assumption that the flow is \textit{standard}: namely the flow has to be smooth at every point where a tangent flow is a unit-density half-plane (see \cite[Definition 11.1]{whitebdry}). Since we only prove backward regularity, our result does not guarantee (as it should not be expected) that an integral Brakke flow with boundary satisfying the assumptions of \Cref{thm:main_intro} is actually standard.

The first $\eps$-regularity theorem for the mean curvature flow (without boundary) was proved in \cite{brakke} and then refined in \cite{kasai_tonegawa}, where the authors extended the result to mean curvature flow in general ambient manifolds. Another relevant reference is the recent work \cite{stuvard_Endtime}. The above mentioned proofs are variational and rely on $L^2$ energy estimates, somehow in the spirit of \cite{allard1972}. We think that a variational proof of \Cref{thm:main_intro} may be performed, by adapting the arguments in \cite{allard1975} and in \cite{bourni2016allard} to account for the presence of the boundary. As mentioned, our proof is based on an argument first developed in \cite{savin07} for elliptic equations and then adapted to parabolic equations in \cite{wangsmall}. This method was used in \cite{savin2017viscosity} to prove an Allard-type theorem for minimal surfaces. Although an adaptation of the same techniques to the mean curvature flow seems quite natural, to the best of the author's knowledge this paper is the first instance in which these techniques are used for the mean curvature flow.

The regularity of mean curvature flow with boundary has been briefly investigated also in \cite{Whi95, Whi05}. One should also see \cite{Edelen}, where the author defines a Brakke flow with a free boundary condition. Another definition of Brakke flow with fixed boundary has been investigated in \cite{stuvard2021existence}.

\subsection{Structure of the paper}
In \Cref{sec:notation}, we collect some notations that will be used throughout the paper and some well known facts about rectifiable measures. We then recall the definition of Integral Brakke flow with boundary, as stated in \cite{whitebdry}.

\Cref{sec:properties} is dedicated to collecting some known results about Integral Brakke flows and to adapting them to the case of an integral Brakke flow with fixed boundary. 

The core of the paper is \Cref{sec:IOF}, where we state and prove the improvement of flatness described in \Cref{prop:IOF_intro}, which will later yield the desired $C^{1,\beta}$ regularity. The proof of this result is described in \Cref{subsec:iof_proof}. The aforementioned barrier argument and Harnack-type inequality, which are crucial for obtaining the desired compactness, are discussed in \Cref{subsec:bdry} and \Cref{subsec:osc_decay}, respectively. 

Finally, the proof of \Cref{thm:main_intro} is given in \Cref{sec:final}, where we iterate the improvement of flatness to obtain the desired regularity.

\subsection*{Acknowledgements}
The author is extremely grateful to Guido De Philippis for his crucial help in writing this paper and to Luigi De Masi for countless discussions and comments.
This material is based upon work partially supported by the European Research Council (ERC), under the
European Union’s Horizon 2020 research and innovation program, through the project ERC VAREG
- Variational approach to the regularity of the free boundaries (grant agreement No. 853404) and by INDAM-GNAMPA.
This work was completed when the author was a PhD student at SISSA, Trieste.

\section{Preliminaries, notation and definitions}\label{sec:notation}
Throughout the paper, we consider fixed two positive integers $m$ and $d$ such that $m\le d$.
All the constants taken in consideration in the present work depend, in general, on these two parameters, although we will mostly avoid stating such dependency.

For the present section, we introduce two generic positive integers $k\le n$ to define some objects in full generality.

\subsection{Space-time}
By $\RR^{n,1}$ we denote the space $\{(x,t)\colon x\in\RR^n\mbox{ and }t\in\RR\}$. We use upper-case letters to denote points in $\RR^{n,1}$, for example $X = (x,t)$.

For any couple $X=(x,t)$ and $Y=(y,s)$ of points in $\RR^{n,1}$, we let
\begin{equation*}
	\rho(X,Y)=|x-y|+|t-s|^{1/2};
\end{equation*}
$\rho$ is a metric on $\RR^{n,1}$ (see, for example \cite[Exercise 8.5.1]{krylov_holder}) and the topology that $\rho$ induces on $\RR^{n,1}$ coincides with the euclidean topology of $\RR^{n+1}$.
In particular, if $d_H(E,F)$ is the Hausdorff distance between $E$ and $F$ with respect to $\rho$ and $K$ is a compact subset of $\RR^{n,1}$, then the space of non-empty closed subsets of $K$ is a compact metric space, when endowed with the metric $d_H$.

If $x\in\RR^n$ and $r>0$, we set $B^n_r(x) = \{y\in\RR^n\colon|y-x|<r\}$. When the dimension of the space is clear, we omit its indication and simply write $B_r(x)$. We also omit the indication of the center of the ball, whenever it coincides with $0$, so that $B_r=B_r(0)$.
If $(x,t)\in\RR^{n,1}$, we define the parabolic cylinder
\begin{equation}\notag
Q^n_r(x,t) = B^n_r(x)\times(t-r^2,t],
\end{equation}
where the apex $n$ indicates the dimension of the space component; as above, its indication will be omitted when no confusion shall arise. Lastly, we let $Q_r = Q_r(0,0)$.

$\de_p(U\times(a,b))$ denotes the \textit{parabolic boundary} of the cylinder $U\times (a,b)$, where $U\subset\RR^n$:
\begin{equation}\notag
\de_p(U\times (a,b)):=\big(\overline U\times\{a\}\big)\cup\big(\de U\times(a,b)\big).
\end{equation}

We define the measures $\Lc^{n,1}$ and $\haus^{s,1}$ (for any $0\le s \le n$) on $\RR^{n,1}$ by
\begin{equation}\notag
\Lc^{n,1}(E\times F) =\Lc^n(E)\times\Lc^1(F),\qquad \haus^{s,1}(E\times F) = \haus^s(E)\times\Lc^1(F)
\end{equation}
for $E\subset\RR^n$ and $F\subset\RR$, where $\Lc^n$ is the Lebesgue measure in $\RR^n$ and $\haus^s$ is the $s$-dimensional Hausdorff measure in $\RR^n$. 

For any function $f:\RR^{n,1}\to\RR^k$, we denote by $\nabla f(x,t)$ the gradient of the function $f(\cdot,t)$ computed at $x$ and by $\de_tf(x,t)$ the derivative of $f(x,\cdot)$ computed at $t$, whenever they are defined.

Lastly, for a set $E\subset\RR^{n}$ and $x\in\RR^n$, we let $\chi_E(x)=0$ if $x\notin E$ and $\chi_E(x)=1$ if $x\in E$.

\subsection{Linear functions and subspaces of the euclidean space}\label{subsec:subspaces}
We let $\{\e_1,\dots,\e_n\}$ be the canonical orthonormal basis of $\RR^n$. 

We define the Grassmannian $\Gr(k,n)$ as the space of (unoriented) $k$-dimensional linear subspaces of $\RR^n$; we identify $S\in\Gr(k,n)$ with the endomorphism $S\colon\RR^n\to\RR^n$ representing the orthogonal projection onto $S$. When no confusion shall arise and an orthonormal basis $\{\zeta_1,\dots,\zeta_k\}$ of $S$ is fixed, we identify $S$ with $\RR^k$ via the canonical bijection
\begin{equation*}
	\iota: S\ni x\mapsto(x\cdot\zeta_1,\dots,x\cdot\zeta_k)\in\RR^k:
\end{equation*}
therefore by $Sx$ we denote both the point $Sx\in S\subset\RR^n$ and its image via $\iota$.
In particular, when $S=\Span\{\e_1,\dots,\e_k\}$ and $x\in\RR^{n}$, we will often use the notation $x'=Sx = (x\cdot\e_1,\dots,x\cdot\e_k)\in\RR^k$.

We also let $S:\RR^{n,1}\to\RR^{k,1}$ be the map $S(x,t)=(Sx,t)$ and, in the case $S = \Span\{\e_1,\dots,\e_k\}$, for $X=(x,t)\in\RR^{n,1}$ we let $X' = (x',t)$.

Lastly, if $S$ and $T$ are two endomorphisms of $\RR^n$, we define the scalar product between $S$ and $T$ by
\begin{equation*}
	S\colon T = \sum_{i,j=1}^nS_{ij}T_{ij},
\end{equation*}
where $(S_{ij})$ is the representation of $S$ as a $n\times n$ matrix such that
\begin{equation*}
	S_{ij} = \e_i\cdot (S\e_j).
\end{equation*}
We also let $|S| = \sqrt{S\colon S}$.

\subsection{H\"{o}lder regularity}\label{sec:hold_reg}
\let \oldbeta \beta
\renewcommand{\beta}{\kappa}
We point out some facts and definitions on H\"{o}lder regularity for several objects. In what follows, $\beta\in(0,1)$ is a fixed parameter.
\begin{enumerate}
	\item \label{bullet:holder_space_functions}\textbf{Functions on $\RR^n$. }Given a function $u:\RR^n\supset U\to\RR^k$, we say that $u\in C^{1,\beta}(U;\RR^k)$ if there exists $C>0$ such that $\sup_{x\in U}|u(x)|\le C$ and for all $x\in U$ there is an affine function $L_x:\RR^n\to\RR^k$ such that, for every $y\in U$, it holds
	\begin{equation*}
		|u(y)-L_x(y)|\le C|x-y|^{1+\beta}.
	\end{equation*}
	
	\item\label{bullet:holder_ST_functions} \textbf{Functions on $\RR^{n,1}$. }Let $\Omega\subset\RR^{n,1}$. We say that $u:\Omega\to\RR^{k}$ is in $C^{1,\beta}(\Omega;\RR^k)$ if there exists $C>0$ such that $\sup_{X\in \Omega}|u(X)|\le C$ and for all $X\in \Omega$ there is an affine function $L_X:\RR^n\to\RR^k$ such that, for every $Y=(y,s)\in \Omega$, it holds
	\begin{equation*}
		|u(Y)-L_X(y)|\le C\rho(X,Y)^{1+\beta}.
	\end{equation*}
	
	\item \textbf{Submanifolds. }We say that a $k$-dimensional, properly embedded submanifold $\Gamma$ of some open set $U\subset\RR^n$ is $C^{1,\beta}$ if there exists some $\kappa>0$ such that, for every $x,y\in\Gamma$, it holds
	\begin{equation*}\label{eq:kappa}
	[\Gamma]_{C^{1,\beta}(U)}:= \sup_{\substack{x,y\in\Gamma\\x\neq y}}\frac{|T_x\Gamma-T_y\Gamma|}{|x-y|^{\beta}}<\infty,
	\end{equation*}
	where $T_\cdot\Gamma\in\Gr(k,n)$ is the tangent space to $\Gamma$ and $|T_x\Gamma-T_y\Gamma|$ should be intended as in \Cref{subsec:subspaces}.
\end{enumerate}

\begin{remark}
	We do not require the sets $U$ and $\Omega$ in items \ref{bullet:holder_space_functions} and \ref{bullet:holder_ST_functions} above to have any regularity.
	However, one can easily see that, if $U\subset\RR^{n}$ has  $C^1$ boundary and $u\in C^{1,\beta}(U;\RR^k)$, then $u$ is actually bounded in $U$, it is differentiable at every point of $\Int E$ and the usual definition of $C^{1,\beta}$ holds:
	\begin{equation*}
		||u||_{C^{1,\beta}(U)}:= \sup_U|u|+\sup_{\substack{x,y\in U\\x\neq y}}\frac{|\nabla u(x)-\nabla u(y)|}{|x-y|^\beta}<\infty.
	\end{equation*}
	In fact, $||u||_{C^{1,\beta}(U)}$ is bounded (up to some multiplicative constant depending only on $U, n,k,\beta$) by the same constant $C$ as in item \ref{bullet:holder_space_functions} above.
	
	Similarly, if $\Omega= U\times I\subset\RR^{n,1}$ for some $U\subset\RR^n$ with $C^1$ boundary and $I\subset\RR$ some interval, then $u\in C^{1,\beta}(\Omega;\RR^k)$ yields that $u$ is differentiable with respect to the space variable everywhere in $\Int U\times I$ and that
	\begin{equation*}
		\norm{u}_{C^{1,\beta}(\Omega)}:=\sup_\Omega |u| + \sup_{\substack{X,Y\in\Omega\\X\neq Y}}\frac{|\nabla u(X)-\nabla u(Y)|}{\rho(X,Y)^\beta} + \sup_{\substack{(x,t),(x,s)\in\Omega\\s\neq t}}\frac{|u(x,t)-u(x,s)|}{|t-s|^{(1+\beta)/2}}<\infty.
	\end{equation*}
\end{remark}

\renewcommand{\beta}{\oldbeta}

\subsection{Integral Varifolds}\label{subsec:varifolds}
We adopt most of the terminology from \cite{whitebdry}.
Let $U\subset\RR^d$ be an open set and let $\Mc(U)$ be the set of non-negative Radon measures on $U$; if $\phi$ is continuous and compactly supported on $U$, we let $M(\phi) = \int\phi(x)\,dM(x)$ for $M\in\Mc(U)$.
Let $\Mc_m(U)$ be the set of $m$-dimensional rectifiable non-negative Radon measures on $U$. Namely, $M\in\Mc_m(U)$ if and only if there exist a $m$-dimensional rectifiable set $E$ and a non-negative function $\theta\in L^1_{loc}(\Hc^m\rest E)$ such that
\begin{equation*}\label{eq:k_rect}
M(\phi)=\int_E\theta(x)\phi(x)\,d\haus^m(x)\qquad\mbox{for all }\phi\in C_c(U).
\end{equation*}
We also let $\IM_m(U)$ be the set of those $M\in \Mc_m(U)$ such that their density $\theta(x)$ is a non-negative integer at $M$-a.e. $x$.
If $M\in\Mc_m(U)$, then for $M$-a.e. $x$ the approximate tangent space $T_xM\in\Gr(m,d)$ is well defined (see, for instance, \cite[Chapter 3]{simonGMT}).
A $m$-dimensional varifold on $U$ is a Radon measure on $U\times\Gr(m,d)$ (see \cite[Chapter 8]{simonGMT}).
In particular, to each $M\in\Mc_m(U)$ we may associate a $m$-dimensional varifold $\Var(M)$ by
\begin{equation}\notag
\Var(M)(\phi)=\int\phi(x,T_xM)\,dM(x)\qquad\mbox{for all }\phi\in C_c(U\times \Gr(m,d)).
\end{equation}
Such an object is called a rectifiable varifold (see \cite[Chapter 4]{simonGMT}); $\Var(M)$ is said to be integral if and only if $M\in\IM_m(U)$.
If $M\in\Mc_m(U)$, we say that $\Var(M)$ has bounded first variation if there exists $C>0$ such that, for every smooth vector field $F:U\to\RR^d$ with compact support in $U$, it holds
\begin{equation*}
	\int T_xM\colon \nabla F(x)\,dM(x)\le C\norm{F}_\infty.
\end{equation*}
If $\Var(M)$ has bounded first variation, then there exist a $M$-locally integrable vector field $H_M$, a Radon measure $\beta_M$ that is singular with respect to $M$ and a $\beta_M$-locally integrable unit vector field $\zeta_M$ such that, for every $F\in C^1_c(U;\RR^d)$, it holds 
\begin{equation}\label{eq:first_var}
\int T_xM:\nabla F(x)\,dM(x)= -\int H_M\cdot F\,dM+\int F\cdot\zeta_M\,d\beta_M.
\end{equation}
In the following, we will often denote
\begin{equation}\notag
\dive_S F(x) = S\colon\nabla F(x).
\end{equation}
When $M\in\Mc_m(U)$, we also let
\begin{equation}\notag
\dive_MF(x) := \dive_{T_xM}F(x) = T_xM\colon \nabla F(x),
\end{equation}
whenever it is well defined.

\begin{definition}\label{def:vfd_w_b}
	Let $\Gamma$ be a properly embedded $(m-1)$-dimensional submanifold of $U\subset\RR^d$. We let $\Vc_m(U,\Gamma)$ be the space of those $M\in\IM_m(U)$ such that $\Var(M)$ has bounded first variation and the following hold true:
	\begin{enumerate}
		\item $\beta_M(E)\le\haus^{m-1}(E\cap \Gamma)$ for every $E\subset U$;
		\item $H_M(x)$ and $T_xM$ are perpendicular at $M$-a.e. $x$. \label{mean_curv_perp}
	\end{enumerate}
\end{definition}
As mentioned in the remark following \cite[Definition 6]{whitebdry}, \Cref{mean_curv_perp} is actually redundant, as it can be derived from \cite[\S5]{brakke}.

As in \cite{whitebdry}, for $M\in\Vc_m(U,\Gamma)$ we let
\begin{equation}\label{eq:nu}
\nu_M(x) = \lim_{r\searrow0}\frac{1}{\omega_{m-1}r^{m-1}}\int_{B_r(x)}\zeta_M\,d\beta_M
\end{equation}
where the limit exists, and $\nu_M(x)=0$ otherwise. Notice that the requirement $\beta_M\le\Hc^{m-1}\rest\Gamma$ in \Cref{def:vfd_w_b} yields $|\nu_M|\le 1$ $\haus^{m-1}\rest\Gamma$-a.e.. Moreover, by \cite[\S3.1]{allard1975}, $\nu_M(y)\perp\Gamma$ for $\Hc^{m-1}$-a.e. $y\in\Gamma$.

In the following, whenever $\Gamma$ is a $(m-1)$-dimensional submanifold of $\RR^d$, by a small abuse of notation we denote by $\Gamma$ the Hausdorff measure $\Hc^{m-1}\rest\Gamma$, if no confusion shall arise.


\subsection{Integral Brakke flows with boundary}\label{subsec:def_BF}
Let $U\subset \RR^d$ be an open set, $I\subset\RR$ be a non-empty interval and let $\Gamma$ be a properly embedded $(m-1)$-dimensional submanifold of $U$.
\begin{definition}[Integral Brakke flow]\label{def:bf}
	A $m$-dimensional integral Brakke flow with boundary $\Gamma$ in $U\times I$ is a collection $\Mb=\{M_t\colon t\in I\}\subset\Mc(U)$ such that the following hold true:
	\begin{enumerate}
		\item \label{a.e.rectif} for almost every $t$, $M_t\in\Vc_m(U,\Gamma)$;
		\item \label{assum:local_finit}if $I'\compact I$ and $U'\compact U$, then $\int_{I'}\int_{U'}(1+|H_{M_t}|^2)\,dM_t\,dt<+\infty$,
		\item if $[a,b]\subset I$ and $u$ is a non-negative, compactly supported, $C^1$ function on $U\times I$, then
		\begin{equation}\label{e:BF_def}
		\int u(\cdot,a)\,dM_a-\int u(\cdot,b)\,dM_b\ge\int_a^b\int\pt{u|H_{M_t}|^2-H_{M_t}\cdot\nabla u-\de_tu}\,dM_t\,dt.
		\end{equation}
	\end{enumerate}
	We denote by $\BF(U\times I,\Gamma)$ the set of all $m$-dimensional integral Brakke flows in $U\times I$ with boundary $\Gamma$.
\end{definition}
When $\Gamma=\emptyset$, we drop its indication and simply write $\BF(U\times I)$; notice that in this case $\beta_{M_t}=0$ for a.e. $t$, and the definition agrees with the one of integral Brakke flow (without boundary) given, for instance, in \cite{Tonegawabook}.

Given $\Mb\in\BF(U\times I,\Gamma)$, we define its \textit{space-time mass measure} $M$ by 
\begin{equation*}
\int\phi(x,t)\,dM(x,t) =\int\int\phi(x,t)\,dM_t\,dt
\end{equation*}
for every $\phi\in C_c(U\times I)$. We define the space-time track of $\Mb$ to be the closed set
\begin{equation*}\label{eq:st_track}
\Sigma_\Mb = \Clos\bigg(\bigcup_{t\in I }\supp M_t\times\{t\}\bigg),
\end{equation*}
where the closure is taken in the euclidean topology of $\RR^{d,1}$, and we let $\Sigma_\Mb(t)$ be the slice at time $t$ of $\Sigma_\Mb$, namely $\Sigma_\Mb(t) = \{x\in\RR^d\colon (x,t)\in\Sigma_\Mb\}$. 
It is straightforward to check that $\supp M\subset\Sigma_\Mb$. Under reasonable assumptions, the opposite inclusion holds true as well: we further discuss this point in \Cref{rk:supporti_coincidono}. Whenever no confusion may arise, we write $\Sigma$ and $\Sigma_t$ in place of $\Sigma_\Mb$ and $\Sigma_\Mb(t)$, respectively.

%
\begin{remark}[Scaling properties]\label{rk:scaling}
	A Brakke flow $\Mb\in\BF(U\times I,\Gamma)$ may be translated and parabolically dilated while preserving the requirements in \Cref{def:bf}. For $x_0\in\RR^d$ and $r>0$, let $T_{x_0,r}(y)=(y-x_0)/r$. By $(T_{x_0,r})\push\mu$ we denote the push-forward of $\mu\in\Mc(\RR^d)$ through $T_{x_0,r}$. Then $\Mb'=\{M'_s\}$ given by
	\begin{equation}\notag
	M_s'=r^{-m}(T_{x_0,r})\push M_{t_0+r^2s}
	\end{equation}
	is a Brakke flow in $\frac{U-x_0}{r}\times \frac{I-t_0}{r^2}$ with boundary $\frac{1}{r}(\Gamma-x_0)$. In this case, we will write
	\begin{equation}\notag
	\Mb' = \Dc_r(\Mb-X_0)
	\end{equation}
	where, as usual, $X_0=(x_0,t_0)$.
\end{remark}

\section{Properties of Integral Brakke flows with boundary}\label{sec:properties}
We collect some known results about Integral Brakke flows, which we will use throughout the rest of the paper. 

\subsection{Monotonicity properties}
We denote by $\hk:\RR^{d}\times(-\infty,0)\to\RR$ the $m$-dimensional backward heat kernel
\begin{equation*}\label{eq:hk_def}
\hk(x,t) = \frac{1}{(4\pi(-t))^{m/2}}\exp\bigg(-\frac{|x|^2}{4(-t)}\bigg).
\end{equation*}
We also pick a smooth cut-off function $\phin\in C^\infty_c([0,2))$ such that $\phin\equiv1$ in $[0,1]$, $|\phin'|\le2$ and $0\le\phin\le1$ everywhere, which from now on we consider fixed.
$\phin$ being chosen, for $R>0$ we set
\begin{equation}\notag
\hk_R(x,t) = \hk(x,t)\phin\bigg(\frac{|x|}{R}\bigg).
\end{equation}

\begin{proposition}[Huisken monotonicity formula]\label{prop:monoton}
	There exists a universal constant $C>0$ such that, if $\Mb\in\BF(U\times (-T,0),\Gamma)$ and $B_{2R}\subset U$, then for every $-T<s\le t<0$ it holds
	\begin{align}
	&\int\hk_R(x,t)\,dM_t-\int\hk_R(x,s)\,dM_s\notag\\
	&\qquad\le\int_{s}^{t}\int\nu_{M_\tau}\cdot\nabla\hk_R(\cdot,\tau)\,d\Gamma\,d\tau\label{integrale_brutto}\\
	&\qquad\quad+ C\frac{t-s}{R^2}\sup_{\tau\in[s,t]}\frac{M_\tau(B_{2R})}{R^m}\label{resto_bello},
	\end{align}
	where $\nu_{M_\tau}$ is defined in \eqref{eq:nu}.
\end{proposition}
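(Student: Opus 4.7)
The plan is to run Huisken's classical monotonicity computation, with the equalities of the smooth MCF replaced by the Brakke inequality \eqref{e:BF_def} and the boundary contribution extracted via the first variation \eqref{eq:first_var}. First I would apply \eqref{e:BF_def} on the subinterval $[s,t]\subset(-T,0)$ with the test function $u=\hk_R$; this is admissible because $\hk_R$ is $C^1$ in space-time and supported in $B_{2R}\subset U$, while the singularity of the backward kernel at $\tau=0$ is avoided since $t<0$. The result is
\begin{equation*}
\int\hk_R\,dM_t-\int\hk_R\,dM_s\le\int_s^t\int\bigl(H\cdot\nabla\hk_R+\partial_\tau\hk_R-\hk_R|H|^2\bigr)\,dM_\tau\,d\tau.
\end{equation*}

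Next I would massage the integrand using three ingredients. First, the perpendicularity of $H$ to the approximate tangent plane (item 2 of \Cref{def:vfd_w_b}) allows the replacement $H\cdot\nabla\hk_R=H\cdot\nabla^\perp\hk_R$. Second, the completion-of-square identity $-\hk_R|H|^2+2H\cdot\nabla^\perp\hk_R=-\hk_R\bigl|H-\nabla^\perp\hk_R/\hk_R\bigr|^2+|\nabla^\perp\hk_R|^2/\hk_R$ lets me drop the non-positive square term. Third, the first variation \eqref{eq:first_var} applied to $F=\nabla\hk_R$ converts $\int H\cdot\nabla\hk_R\,dM_\tau$ into $-\int\Delta_T\hk_R\,dM_\tau+\int_\Gamma\nabla\hk_R\cdot\nu_{M_\tau}\,d\haus^{m-1}$. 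Combined, these produce the boundary integrand of \eqref{integrale_brutto} and collapse the remaining interior integrand to $\partial_\tau\hk_R+\Delta_T\hk_R+|\nabla^\perp\hk_R|^2/\hk_R$.

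This interior integrand is then controlled via the pointwise Huisken identity for the uncut kernel. A direct computation using $\nabla\hk=(x/(2\tau))\hk$, $\partial_\tau\hk=-(m/(2\tau))\hk-(|x|^2/(4\tau^2))\hk$, and $\Delta_T\hk=(m/(2\tau))\hk+(|x^T|^2/(4\tau^2))\hk$ on every $m$-plane $T$ gives $\partial_\tau\hk+\Delta_T\hk+|\nabla^\perp\hk|^2/\hk=0$ in $\RR^d\times(-\infty,0)$. Consequently the interior integrand vanishes on $\{|x|\le R\}$ and is supported in the annulus $\{R\le|x|\le 2R\}$, where differentiating $\phin(|x|/R)$ produces a pointwise deficit bounded by $C\hk/R^2$ (with $C$ depending only on $\|\phin'\|_\infty$ and $\|\phin''\|_\infty$). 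Using $\hk(x,\tau)\le CR^{-m}$ on this annulus when $|\tau|\lesssim R^2$ (and the trivial bound $\hk\le C(-\tau)^{-m/2}$ otherwise), integration against $M_\tau$ and then in time yields exactly \eqref{resto_bello}.

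The main bookkeeping obstacle is the apparent singularity of $|\nabla^\perp\hk_R|^2/\hk_R$ on the set $\{\hk_R=0\}$: one has to justify both the algebra and the use of the Huisken identity near the outer edge of the cutoff support. I would handle this by carrying out the manipulations on the open set $\{\hk_R>0\}$ and, equivalently, rewriting the bad ratio through the Huisken identity as $-\partial_\tau\hk_R-\Delta_T\hk_R$ plus a term localized in the cutoff annulus, which can be absorbed into \eqref{resto_bello}.
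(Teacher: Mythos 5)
Your derivation is correct and is, in substance, exactly the proof of the result the paper cites (the paper's ``proof'' is just a reference to \cite[Theorem 6.1]{whitebdry}, which carries out precisely this computation: Brakke inequality with $u=\hk_R$, perpendicularity of $H$, completion of the square, the pointwise Huisken identity $\de_\tau\hk+\Delta_T\hk+|T^\perp\nabla\hk|^2/\hk=0$, and a cutoff error of size $CR^{-2}\sup_\tau M_\tau(B_{2R})R^{-m}$). One bookkeeping point to be careful about: the first variation identity \eqref{eq:first_var} must be applied to the \emph{leftover} term $-H\cdot\nabla\hk_R$ after writing $H\cdot\nabla^\perp\hk_R=2H\cdot\nabla^\perp\hk_R-H\cdot\nabla^\perp\hk_R$ (this is what produces the $+\Delta_T\hk_R$ in your collapsed integrand), and doing so consistently yields the boundary contribution $-\int\nabla\hk_R\cdot\nu_{M_\tau}\,d\Gamma$, i.e.\ with the opposite sign to \eqref{integrale_brutto} if $\zeta_M$ in \eqref{eq:first_var} is the outward conormal. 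This discrepancy is purely a matter of orientation convention for $\nu_{M_\tau}$ and is harmless for every use of the proposition in the paper, since only the bound $|\nu_{M_\tau}|\le1$, $\nu_{M_\tau}\perp\Gamma$ is ever invoked (as in \Cref{lemma:brutto}), but you should state which convention you adopt.
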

\begin{proof}
	See \cite[Theorem 6.1]{whitebdry}. 
\end{proof}

In several points of the present work, we are going to need some precise bounds on \eqref{integrale_brutto} and \eqref{resto_bello}. While in most cases we will assume a uniform bound of the form
\begin{equation}\notag
\sup_t\sup_{B_r(x)}\frac{M_t(B_r(x))}{r^m}\le E_1<\infty
\end{equation}
which takes care of \eqref{resto_bello}, estimating \eqref{integrale_brutto} requires some more attention.
What we prove in the following lemma is that, at a small enough scale, \eqref{integrale_brutto} is close to $\frac{1}{2}$ if $0\notin\Gamma$, otherwise it is very small. 
\begin{lemma}\label{lemma:brutto}
	For every $\delta>0$, there exist small positive constants $\Lambda$ and $c$ with the following property. Let $U\subset\RR^d$ be open and let $\Gamma$ be a $C^{1,\alpha}$ submanifold of $U$. Then, for every $R\le c/[\Gamma]_{C^{1,\alpha}(U)}$ and for every $(x,t)\in U \times\RR$ such that $B_{2R}(x)\subset U$, it holds
	\begin{equation}\notag
	\int_{t-\Lambda R^2}^{t}\int|T_y\Gamma^\perp\nabla\hk_R(y-x,s-t)|\,d\Gamma(y)\,ds\le\frac{1}{2}\chi_{\Gamma^\compl}(x)+\delta.
	\end{equation}
\end{lemma}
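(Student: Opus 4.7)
After translating to $(x,t)=(0,0)$ and writing $\kappa:=[\Gamma]_{C^{1,\alpha}(U)}$, I would decompose
\[
\nabla\hk_R(y,s)=\phin(|y|/R)\nabla\hk(y,s)+\hk(y,s)\phin'(|y|/R)\frac{y}{R|y|}.
\]
The second summand is supported in the annulus $\{R\le|y|\le 2R\}$, where the pointwise bound $\hk(y,s)\le(4\pi(-s))^{-m/2}e^{-R^2/(4(-s))}$ combined with $\Hc^{m-1}(\Gamma\cap B_{2R})\le CR^{m-1}$ makes its contribution to the double integral exponentially small in $1/\Lambda$ after integration over $s\in[-\Lambda R^2,0]$. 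This piece is absorbed into $\delta/3$ by taking $\Lambda$ small.

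It therefore suffices to control the main term
\[
J:=\int_{-\Lambda R^2}^0\int_\Gamma\frac{|T_y\Gamma^\perp y|}{2(-s)}\hk(y,s)\phin(|y|/R)\,d\Gamma(y)\,ds.
\]
If $0\in\Gamma$, I would pick coordinates so that $T_0\Gamma$ is a fixed $(m-1)$-plane and represent $\Gamma$ near the origin as the graph of a $C^{1,\alpha}$ map $g\colon T_0\Gamma\to T_0\Gamma^\perp$ with $g(0)=0$ and $\nabla g(0)=0$. The Hölder bounds $|\nabla g(z)|\le\kappa|z|^\alpha$ and $|g(z)|\le\frac{\kappa}{1+\alpha}|z|^{1+\alpha}$, together with an elementary projection computation, yield the pointwise estimate $|T_y\Gamma^\perp y|\le C\kappa|y|^{1+\alpha}$ on $\Gamma\cap B_{2R}$. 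Plugging this into $J$ and rescaling via $z=\sqrt{-s}\,w$ collapses the spatial integral to a Gamma-type constant times $(-s)^{(\alpha-2)/2}$, and integration in $s$ returns a bound of the form $C\,\kappa\,\Lambda^{\alpha/2}R^\alpha$, which is at most $\delta/3$ once $c$ and $\Lambda$ are chosen small.

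If $0\notin\Gamma$, let $y_0\in\Gamma$ realize $a:=\dist(0,\Gamma)=|y_0|>0$, so that $y_0\in T_{y_0}\Gamma^\perp$. I would use the local graph parametrization centered at $y_0$, writing $y=y_0+z+g(z)$ with $z\in T_{y_0}\Gamma$, $g(0)=0$, $\nabla g(0)=0$, and decompose
\[
T_y\Gamma^\perp y\;=\;T_{y_0}\Gamma^\perp y\;+\;(T_y\Gamma^\perp-T_{y_0}\Gamma^\perp)y.
\]
Since $y_0$ and $g(z)$ both lie in $T_{y_0}\Gamma^\perp$, the first piece has norm $\sqrt{a^2+|g(z)|^2}$, while the second is bounded by $|T_y\Gamma-T_{y_0}\Gamma|\,|y|\le\kappa|y-y_0|^\alpha|y|$. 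Keeping only the $a$-contribution gives exactly the integral on the flat hyperplane $y_0+T_{y_0}\Gamma$, where the change of variables $\tau=a^2/(4(-s))$ turns it into
\[
\frac{1}{2\sqrt{\pi}}\int_{\tau_0}^{+\infty}\tau^{-1/2}e^{-\tau}\,d\tau\;\le\;\frac{1}{2},\qquad \tau_0=\frac{a^2}{4\Lambda R^2}.
\]
The $|g(z)|$-term and the twisting error both produce remainders of order $\kappa R^\alpha$ (multiplied, if $a^2\ge\Lambda R^2$, by $e^{-a^2/(4\Lambda R^2)}$), contributing at most $\delta/3$.

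\textbf{Main obstacle.} The estimate must be uniform in $a\ge 0$ and interpolate continuously between the $\le\delta$ regime at $a=0$ and the $\le\frac12+\delta$ regime at $a>0$. The cleanest route is to always center the local graph representation at the nearest point $y_0$ rather than at $0$, so that the leading part of $|T_y\Gamma^\perp y|$ is exactly $a$, and then to balance the geometric errors of order $\kappa R^\alpha$ against the exponential suppression $e^{-a^2/(4(-s))}$ that becomes effective once $a^2\ge\Lambda R^2$. The cutoff and twisting errors are routine and absorbed by shrinking $\Lambda$ and $c$ respectively.
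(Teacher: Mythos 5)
Your Case 1 ($0\in\Gamma$) is essentially the paper's argument: split off the cutoff term, absorb it by taking $\Lambda$ small, and use $|T_y\Gamma^\perp y|\le C\kappa|y|^{1+\alpha}$ together with the graph/area formula to get a bound of order $\kappa\Lambda^{\alpha/2}R^\alpha$. Your Case 2, however, takes a genuinely different route. The paper replaces $\Gamma$ by the exterior cone measure $E_\Gamma$ of \cite[Section 7]{whitebdry}, converts the boundary integral into boundary terms $\int\hk_1(\cdot,-\Lambda)\,dE_\Gamma+C\Lambda E_\Gamma(B_2)$ via the computation of \cite[Theorem 7.1]{whitebdry}, and then concludes by a compactness argument: a sequence of flatter and flatter $\Gamma^j$ would force $E_{\Gamma^j}\weakly\haus^m\rest S^+$ with $0\notin\Int S^+$, for which the half-plane heat-kernel integral is at most $\frac12$. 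You instead compute directly: center the graph at the nearest point $y_0$, isolate the flat contribution over the shifted plane $y_0+T_{y_0}\Gamma$, and evaluate it exactly as $\frac{1}{2\sqrt\pi}\int_{\tau_0}^\infty\tau^{-1/2}e^{-\tau}\,d\tau\le\frac12$ (the same half-plane constant, obtained in closed form rather than by compactness), with remainders of order $\kappa R^\alpha$. Your approach is more self-contained (no reliance on White's cone construction) and fully quantitative in $\delta$, at the price of heavier bookkeeping; the paper's is shorter but non-effective. Two details in your Case 2 deserve care when written out: $y_0$ and $g(z)$ both lie in $T_{y_0}\Gamma^\perp$ but need not be orthogonal, so $|T_{y_0}\Gamma^\perp y|$ is only bounded by $a+|g(z)|$, not equal to $\sqrt{a^2+|g(z)|^2}$; and reducing $\hk(y,s)$ to the flat kernel $\hk(y_0+z,s)$ produces a cross term $e^{a|g(z)|/(2(-s))}$ in the exponent, which must be paired with part of the factor $e^{-a^2/(4(-s))}$ before it can be absorbed into the $O(\kappa R^\alpha)$ remainder. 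Both work out, but neither is entirely automatic. (Both your proof and the paper's implicitly use that $\Gamma\cap B_{2R}(x)$ is a single connected graph; this is how the lemma is applied, but it is not literally forced by the hypothesis $[\Gamma]_{C^{1,\alpha}}$ small.)
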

The proof of \Cref{lemma:brutto} is somehow cumbersome and is therefore postponed to \Cref{sec:proofofLemmabrutto}.

Exploiting the above result, we may prove a sort of \textit{clearing-out lemma}, in the spirit of \cite[Proposition 3.6]{Tonegawabook}. Namely, we prove that, provided we have some control on \eqref{resto_bello} and \eqref{integrale_brutto}, if a point $(x,t)$ is in the space-time track of $\Mb$, then $M_s$ cannot be too small in a backward neighborhood of $(x,t)$.

Before proceeding with this result, we introduce the following terminology:
\begin{definition}[Maximal density ratio]
	A Brakke flow $\Mb$ (possibly with boundary) in $U\times I$ is said to have \emph{bounded maximal density ratio} in $U'\times I'$, where $U'\subset U$ and $I'\subset I$, if
	\begin{equation*}
	\mdr(\Mb,U'\times I'):=\sup_{B_r(x)\subset U'}\sup_{t\in I'}\frac{M_t(B_r(x))}{r^m}<\infty.
	\end{equation*}
\end{definition}

\begin{proposition}[Clearing-out lemma]\label{cout_lemma}
	For every $\bound<\infty$ there exist positive constants $c_1, c_2$ with the following property. Let $\Gamma$ be a $C^{1,\alpha}$ submanifold of $U$ and let $\Mb\in\BF(U\times (a,b),\Gamma)$ be such that
	\begin{equation*}\label{e:uniformbound}
	\mdr(\Mb,U\times(a,b))\le\bound.
	\end{equation*}
	If $(x,t)\in\Sigma_\Mb$, and $R$ is small enough depending on $\Gamma$, then 
	\begin{equation}\notag
	M_{t-c_1R^2}(B_{4R}(x))\ge c_2R^m.
	\end{equation}
\end{proposition}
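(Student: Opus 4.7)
The plan is to argue by contradiction using Huisken's monotonicity formula together with \Cref{lemma:brutto} and the standard fact that an integral rectifiable varifold has density at least $1$ almost everywhere on its support. Suppose toward a contradiction that $M_{t - c_1 R^2}(B_{4R}(x)) < c_2 R^m$ for small constants $c_1, c_2 > 0$ to be chosen at the end; I aim to show this forces $(x,t) \notin \Sigma_\Mb$.

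First, I would exploit $(x,t) \in \Sigma_\Mb$ to select a sequence $(x_n, t_n) \to (x, t)$ with $x_n \in \supp M_{t_n}$. Perturbing, I would arrange that $t_n$ lies in the full-measure set on which $M_{t_n} \in \Vc_m(U, \Gamma)$, that $x_n \notin \Gamma$, and that $\Theta^m(M_{t_n}, x_n) \ge 1$; the last two conditions use integrality of $M_{t_n}$ and the fact that the $(m-1)$-dimensional set $\Gamma$ is $M_{t_n}$-negligible. Then I would apply \Cref{prop:monoton}, after translating to place the singularity of the heat kernel at $(x_n, t_n)$, over the interval $[s, t_n - \eps]$ with $s := t - c_1 R^2$ and $\eps \searrow 0$. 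Provided $R$ is small enough that $B_{4R}(x) \subset U$ and $R \le c/[\Gamma]_{C^{1,\alpha}(U)}$, and $c_1 \le \Lambda/2$ (for $c, \Lambda$ from \Cref{lemma:brutto} with a chosen tolerance $\delta$), this interval sits inside $[t_n - \Lambda R^2, t_n]$ for $n$ large, regardless of whether $t_n \lessgtr t$.

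Letting $\eps \searrow 0$, concentration of the backward heat kernel forces the left-hand side of \Cref{prop:monoton} to converge to a limit at least $\Theta^m(M_{t_n}, x_n) \ge 1$. On the right-hand side, the term $\int \hk_R(\cdot - x_n, s - t_n)\, dM_s$ is at most $C(c_1 R^2)^{-m/2} M_s(B_{2R}(x_n)) \le C c_2 / c_1^{m/2}$ by the contradiction hypothesis (using $B_{2R}(x_n) \subset B_{4R}(x)$ for $n$ large); the boundary term \eqref{integrale_brutto} is controlled via $|\nu_{M_\tau}| \le 1$ and $\nu_{M_\tau} \perp \Gamma$ together with \Cref{lemma:brutto}, yielding at most $\tfrac{1}{2}\chi_{\Gamma^\compl}(x_n) + \delta = \tfrac{1}{2} + \delta$ since $x_n \notin \Gamma$; and \eqref{resto_bello} is at most $C c_1 \bound$ by the $\mdr$ hypothesis. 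Summing, $1 \le C c_2/c_1^{m/2} + \tfrac{1}{2} + \delta + C c_1 \bound$; fixing first $\delta$, then $c_1$, then $c_2$ small enough pushes the right-hand side strictly below $1$, a contradiction. Unwinding yields the desired $M_{t - c_1 R^2}(B_{4R}(x)) \ge c_2 R^m$.

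The main obstacle will be the first step, namely simultaneously enforcing that $t_n$ belongs to the good rectifiable set, that $x_n \notin \Gamma$, and that $\Theta^m(M_{t_n}, x_n) \ge 1$. This relies on the integrality of the varifolds, the $(m-1)$-dimensionality of $\Gamma$, and standard upper-semicontinuity of the mass along a Brakke flow; no new ingredient is needed, but the book-keeping matters because \Cref{lemma:brutto} gives a small bound only when the reference point is off $\Gamma$. The remainder of the argument is careful estimation within the monotonicity formula.
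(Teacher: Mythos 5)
Your proposal is correct and follows essentially the same route as the paper: the paper also combines \Cref{prop:monoton} with \Cref{lemma:brutto} and the fact that an integral varifold has density $\ge 1$ at a.e.\ point of its support, first at a nearby good space-time point and then passing to the limit (the paper phrases this directly rather than by contradiction, and converts the resulting lower bound on the Gaussian integral into a mass bound via $\hk_R(\cdot,-c_1R^2)\le CR^{-m}\chi_{B_{2R}}$, which is equivalent to your budgeting with $c_2/c_1^{m/2}$). The one step to phrase carefully is the limit $\eps\searrow 0$: to capture $\Theta^m(M_{t_n},x_n)$ you should keep the slice $M_{t_n}$ fixed and let the kernel's reference time tend to $t_n$ from above (as the paper does with $\lim_{\tau\nearrow 0}M_t(\hk_R(\cdot-y,\tau))=\theta(y)$), rather than evaluating the varying measures $M_{t_n-\eps}$, whose Gaussian integrals are not a priori known to converge to the static density.
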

\begin{proof}
	The proof of the case without a boundary can be found, for example, in \cite[Proposition 3.6]{Tonegawabook}. For the sake of completeness, we sketch the proof along the same line in the case of an Integral Brakke flow with boundary.
	
	Corresponding to $\delta=1/4$, choose $\Lambda$ and $c$ as in \Cref{lemma:brutto}. Let $(x,t)\in\Sigma$ and let $R\le c/[\Gamma]_{C^{1,\alpha}(U)}$.
	
	We first assume that $x\in\supp M_t$ and that $M_t\in\Vc_m(U,\Gamma)$, so that in particular $M_t=\theta(\cdot)\haus^m\rest E$ for some $m$-rectifiable set $E$. Then there exists $y\in B_{R}(x)$ such that
	\begin{equation}\label{eq:density_integer}
	1\le\theta(y)=\lim_{\tau\nearrow0}M_{t}(\hk_R(\cdot-y,\tau)).
	\end{equation}
	Therefore, by centering \Cref{prop:monoton} at a point $(y,t-\tau)$ and then letting $\tau\nearrow0$, for any $t_1<t$, it holds
	\begin{equation}\notag
	M_{t_1}(\hk_R(\cdot-y,t_1-t))\ge\theta(y)-C\bound\frac{t-t_1}{R^2}-\int_{t_1}^t\int\nu_{M_s}\cdot\nabla\hk_R(\cdot-y,s-t)\,d\Gamma\,ds.
	\end{equation}
	We now choose $c_1$ so small that both $C\bound c_1\le\frac{1}{8}$ and $c_1\le\Lambda$ and we set $t_1=t-c_1R^2$. Then, using \Cref{lemma:brutto}, we obtain
	\begin{equation}\notag
	M_{t_1}(\hk_R(\cdot-y,-c_1R^2))\ge\theta(y)-\frac{1}{8}-\bigg(\frac{1}{2}+\frac{1}{4}\bigg)\ge\frac{1}{8},
	\end{equation}
	where the second inequality is given by \eqref{eq:density_integer}.
	Notice that, for every $z\in\RR^d$, simple computations yield
	\begin{equation*}
		\hk_R(z-y,-c_1R^2)\le CR^{-m}\chi_{B_{2R}(y)}(z)\le CR^{-m}\chi_{B_{3R}(x)}(z)
	\end{equation*}
	for some $C>0$ universal. Hence, by integrating the above inequality in $M_{t-c_1R^2}$, we obtain
	\begin{equation}\notag
	M_{t-c_1R^2}(B_{3R}(x))\ge \frac{R^m}{C}M_{t-c_1R^2}(\hk_R(\cdot-y,-c_1R^2))\ge \frac{R^m}{8C},
	\end{equation}
	as desired.
	
	If $x\notin\supp M_t$ or $M_t\notin\Vc_m(U,\Gamma)$, then one can find a sequence of points $(x_i,t_i)$ such that $M_{t_i}\in\Vc_m(U,\Gamma)$, $x_i\in\supp M_{t_i}$ and such that $(x_i,t_i)\to(x,t)$. It is then sufficient to choose $R_i$ so that $t_i-c_1R_i^2=t-c_1R^2$ to obtain, for $i$ large enough,
	\begin{equation}\notag
	M_{t-c_1R^2}(B_{4R}(x))\ge M_{t-c_1R^2}(B_{3R}(x_i))\ge c_2 R^m.
	\end{equation}
\end{proof}

We now state two important consequences of \Cref{cout_lemma}.
\begin{lemma}\label{rk:supporti_coincidono}
	Let $\Mb\in\BF(U\times I,\Gamma)$ have bounded maximal density ratio in $U\times I$ and let $\Gamma\in C^{1,\alpha}(U)$.
	Then 
	\begin{equation}\notag
	\Sigma_\Mb=\supp M.
	\end{equation}
\end{lemma}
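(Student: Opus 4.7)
The reverse inclusion $\supp M \subset \Sigma_\Mb$ is immediate from the definitions and was already observed in the text. To prove $\Sigma_\Mb \subset \supp M$, I fix $(x,t)\in\Sigma_\Mb$ and an arbitrary open neighborhood $V$ of $(x,t)$ in $U\times I$; the aim is to show $M(V)>0$. A single application of \Cref{cout_lemma}, justified by the bounded maximal density ratio and by taking $R$ sufficiently small depending on $\Gamma$, gives
\[
M_s(B_{4R}(x))\ge c_2 R^m>0,\qquad s:=t-c_1 R^2.
\]
This produces positive mass at the single instant $s$, which the space-time measure $M$ cannot detect as a positive-measure set of times; so a single call of the clearing-out is not enough.

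The plan is to invoke \Cref{cout_lemma} a second time at a well-chosen space-time point. Since $B_{4R}(x)$ is open and $M_s(B_{4R}(x))>0$, there exists $y\in \supp M_s\cap B_{4R}(x)$, and then $(y,s)\in \Sigma_\Mb$ by definition of the space-time track. Applying \Cref{cout_lemma} at $(y,s)$, for every $R'>0$ small enough,
\[
M_{s-c_1 R'^2}(B_{4R'}(y))\ge c_2 R'^m>0.
\]
The crucial observation is that $R'$ can be varied continuously: writing $s'=s-c_1 R'^2$, as $R'$ ranges in $(0,\bar R)$ the time $s'$ sweeps the whole interval $(s-c_1\bar R^2,s)$. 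Choosing $R$ and $\bar R$ small enough that $B_{4R'}(y)\subset W_{s'}$ for every $R'\in(0,\bar R)$, where $W_{s'}:=\{z\in\RR^d:(z,s')\in V\}$ is the spatial slice of $V$ at time $s'$, and that $(s-c_1\bar R^2,s)$ is contained in the projection of $V$ onto the time axis, I obtain $M_{s'}(W_{s'})>0$ for every $s'$ in an interval of positive length. Integrating yields
\[
M(V)\ge \int_{s-c_1\bar R^2}^{s} M_{s'}(W_{s'})\, ds'>0,
\]
as required.

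The main obstacle is precisely the gap between pointwise-in-time mass bounds (the output of a single clearing-out) and the space-time measure $M$, which requires positive mass on a set of times of positive Lebesgue measure. The two-step strategy closes this gap: the first invocation of \Cref{cout_lemma} plants a new space-time point of $\Sigma_\Mb$ near $(x,t)$, while the second, combined with the freedom to vary the scale parameter $R'$, upgrades the pointwise bound to a bound on a whole interval of times.
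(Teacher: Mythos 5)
Your proof is correct and follows essentially the same route as the paper: the paper applies \Cref{cout_lemma} at $(x,t)$ itself for a continuum of radii $r$, so that the backward times $t-c_1r^2$ sweep an interval, and then integrates in $r$ — exactly your varying-$R'$ step. Your preliminary step of first locating an intermediate point $(y,s)\in\Sigma_\Mb$ near $(x,t)$ is harmless but redundant, since the radius can already be varied in the first application at $(x,t)$.
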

\begin{proof}
	The inclusion $\supp M\subset\Sigma$ is trivial. For the opposite inclusion, notice that, for a point $(x,t)\in\Sigma$ and for every $r>0$ small enough, \Cref{cout_lemma} gives
	\begin{equation}\notag
	M_{t-cr^2}(B_{r}(x))\ge cr^m.
	\end{equation}
	It is now sufficient to integrate this inequality in $r$ to obtain that for every $r>0$ small enough, there is a  set of the form
	\begin{equation}\notag
	A_r = \big\{(y,s)\colon|y-x|\le\theta\sqrt{t-s}\le r\big\}
	\end{equation}
	for some positive $\theta,c$ depending only on $\mdr(\Mb)$ such that $M(A_r)>0$, hence $(x,t)\in\supp M$, as claimed.
\end{proof}

\begin{lemma}\label{lemma:federer}
	Let $\Mb\in\BF(U\times I,\Gamma)$ have bounded maximal density ratio and let $\Gamma\in C^{1,\alpha}(U)$. Then
	\begin{equation}\notag
	M\ge\haus^{m,1}\rest\Sigma_{\Mb}.\label{federer_tesi}
	\end{equation}
\end{lemma}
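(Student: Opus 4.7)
The natural strategy is to reduce to a slicewise inequality via the time-disintegration $dM = dM_t \otimes dt$ and Fubini's theorem. For every Borel $A\subset\RR^{d,1}$,
\[
  \haus^{m,1}(A \cap \Sigma_\Mb) = \int_I \haus^m(A(t) \cap \Sigma_\Mb(t))\,dt,
\]
so it suffices to prove that $M_t \ge \haus^m \rest \Sigma_\Mb(t)$ for $\Lc^1$-almost every $t\in I$, and integrate in $t$.

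For $\Lc^1$-a.e. $t$, the first condition in \Cref{def:bf} yields $M_t \in \Vc_m(U,\Gamma)$: integer rectifiable with bounded first variation. The classical (spatial) monotonicity formula for such varifolds ensures that the density $\theta(M_t,\cdot)$ is everywhere defined and upper semi-continuous on $U$, with $\theta(M_t,x) \ge 1$ at every $x\in\supp M_t$; consequently $M_t \ge \haus^m \rest \supp M_t$. The proof thus reduces to showing
\[
  \haus^m\big(\Sigma_\Mb(t) \setminus \supp M_t\big) = 0 \qquad \text{for } \Lc^1\text{-a.e. } t\in I.
\]

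By \Cref{rk:supporti_coincidono} we have $\Sigma_\Mb = \supp M$, so any $x\in \Sigma_\Mb(t)$ is the limit of a sequence $(y_k,s_k)\to(x,t)$ with $y_k\in\supp M_{s_k}$. Brakke's inequality \eqref{e:BF_def} together with the local mass bound implies that, for each non-negative $\phin\in C^1_c(U)$, the function $t\mapsto\int\phin\,dM_t$ is of bounded variation on compact subintervals of $I$; selecting a countable family of such test functions separating the points of $U$ and excluding the countable union of their discontinuity sets, one obtains an $\Lc^1$-full set of times $t$ at which $M_s\to M_t$ weakly-$*$ as $s\to t$. At such a ``good'' $t$, combining this weak-$*$ continuity with the scale-invariant lower bound from the clearing-out estimate \Cref{cout_lemma} applied along $(y_k,s_k)$ forces $x\in\supp M_t$, concluding the argument. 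The main obstacle is exactly this last identification $\Sigma_\Mb(t) = \supp M_t$ for a.e. $t$: Brakke flows may exhibit sudden mass loss, so the two sets can genuinely differ at special times, and the argument hinges on the BV structure of the mass function and the quantitative lower bound from clearing-out to restrict such ``bad'' times to an $\Lc^1$-null set.
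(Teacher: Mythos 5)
Your slicewise reduction (prove $M_t\ge\haus^m\rest\Sigma_\Mb(t)$ for a.e.\ $t$ and integrate) is the same starting point as the paper's, but both key steps you then invoke are unjustified, and as stated they do not hold with the tools available. First, for a.e.\ $t$ the varifold $M_t$ has generalized mean curvature only in $L^2(M_t)$ (this is all \Cref{def:bf} provides), plus a singular boundary part along $\Gamma$. The ``classical spatial monotonicity formula'' you appeal to --- everywhere-defined, upper semicontinuous density with $\theta\ge1$ at every point of $\supp M_t$ --- requires $H\in L^p$ with $p>m$ (Allard) or comparable control of the first variation; with only $H\in L^2$ and bounded density ratios one gets $\|\delta V\|(B_r)\le\|H\|_{L^2(B_r)}M_t(B_r)^{1/2}\lesssim r^{m/2}$, which for $m\ge2$ is not enough to close the monotonicity inequality, and an integral varifold with merely locally bounded first variation can indeed have support points of zero density. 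So $M_t\ge\haus^m\rest\supp M_t$ is not available. Second, your final step does not transfer the clearing-out bound to time $t$: \Cref{cout_lemma} gives $M_{t-c_1R^2}(B_{4R}(x))\ge c_2R^m$, with the time offset tied to the radius. To conclude $x\in\supp M_t$ you would need $M_{s_k}(B_\rho(x))\ge c>0$ along $s_k\to t$ for a \emph{fixed} ball and a \emph{uniform} constant, but clearing out only yields $M_{s_k}(B_{4R_k}(x))\ge c_2R_k^m$ with $R_k^2=(t-s_k)/c_1\to0$, a lower bound that degenerates to $0$; weak-$*$ continuity at $t$ then gives nothing. The pointwise inclusion $\Sigma_\Mb(t)\subset\supp M_t$ is precisely the type of statement that can fail (cf.\ sudden mass loss) and is not what the paper establishes.

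The paper's proof (Appendix B, following Kasai--Tonegawa) is instead quantitative and circumvents both issues. Writing $M_t=\theta_t\haus^m\rest E_t$ at a good time $t$ (rectifiability plus continuity of $s\mapsto M_s(\phi)$), it assumes $\haus^m(\Sigma_t\setminus E_t)>0$, covers a positive-measure set of low-density points by disjoint balls $B_R(x_j)$, and notes that \Cref{cout_lemma} forces a definite amount of mass $\gtrsim b_0$ to disappear from these balls between times $t-c_1R^2$ and $t$. Via Brakke's inequality and Cauchy--Schwarz this yields $\int\int|H|^2\gtrsim b_0/(E_1r^m)$ on the shrinking time interval, which in turn produces a fixed negative jump of $s\mapsto M_s(\hat\phi)$ at $t$ for a fixed cutoff $\hat\phi$, contradicting continuity. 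Any repair of your argument essentially has to reproduce this mechanism; the purely spatial route does not go through.
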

For the proof of the above lemma, we refer the reader to \Cref{app:proof_of_federer}.

\subsection{Maximum principle}\label{subsec:MAX}
In the present subsection, we assume that $U\subset\RR^d$ is open and $I\subset\RR$ is an interval of the form $\Tint{a,b}$. We also let $\Gamma$ be a $(m-1)$-dimensional, $C^{1,\alpha}$ submanifold of $U$.

The main result of the present section is the following maximum principle.

\begin{proposition}[Maximum principle]\label{Maxprinciple}
	Let $\Mb\in\BF(U\times I,\Gamma)$ have bounded maximal density ratio.
	
	If there exist $u\in C^2(U\times I)$ and a point $(x_0,t_0)\in \Sigma\setminus \de_p(U\times I)$ with $x_0\notin\Gamma$ such that $u|_{\Sigma\cap\{t\le t_0\}}$ has a local maximum at $(x_0,t_0)$ and $\nabla u(x_0,t_0)\neq0$, then
	\begin{equation}\notag
	\de_tu(x_0,t_0)-\inf_{\substack{T\in\Gr(m,d)\\T\perp\nabla u(x_0,t_0)}} T\sprod D^2u(x_0,t_0)\ge0.
	\end{equation}
\end{proposition}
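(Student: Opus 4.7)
I argue by contradiction. Suppose there is $T_\ast\in\Gr(m,d)$ with $T_\ast\perp\nabla u(x_0,t_0)$ and
\[
\de_tu(x_0,t_0)-T_\ast\sprod D^2u(x_0,t_0)=-2\delta<0,
\]
and normalise $u(x_0,t_0)=0$. A standard reduction replaces $u$ by $u-\eps(|x-x_0|^2-(t-t_0))$ for small $\eps>0$: this leaves $\nabla u(x_0,t_0)$ unchanged, upgrades $(x_0,t_0)$ to a strict local maximum of $u$ on $\Sigma\cap\{t\le t_0\}$, and preserves the strict inequality $\de_tu-T_\ast\sprod D^2u<-\delta$ throughout a parabolic neighbourhood $Q=B_{2r_0}(x_0)\times(t_0-\tau,t_0]\subset U\times I$ disjoint from $\Gamma$. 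Strictness yields $\sigma_0>0$ with $u\le-2\sigma_0$ on $\Sigma\cap\bigl(Q\setminus(B_{r_0}(x_0)\times(t_0-\tau/2,t_0])\bigr)$.

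The main step is to test \eqref{e:BF_def} on $[t_0-\tau/4,t_0]$ against the family
\[
\phi_\sigma(x,t)=\bigl((u(x,t)+\sigma)_+\bigr)^2\zeta(x),\qquad 0<\sigma<\sigma_0,
\]
where $\zeta\equiv 1$ on $B_{r_0}(x_0)$ and $\supp\zeta\subset B_{2r_0}(x_0)$. By the strict-maximum reduction, $\phi_\sigma\rest\Sigma$ is supported in a neighbourhood of $(x_0,t_0)$ shrinking to $\{(x_0,t_0)\}$ as $\sigma\to 0$, so the left-hand side of \eqref{e:BF_def} at time $t_0$ is $o(\sigma^2)$. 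On the right-hand side, using $\nabla\phi_\sigma=2(u+\sigma)_+\zeta\,\nabla u+(u+\sigma)_+^2\nabla\zeta$, $\de_t\phi_\sigma=2(u+\sigma)_+\zeta\,\de_tu$, the first variation formula \eqref{eq:first_var}, and the perpendicularity $H_{M_t}\perp T_xM_t$ from \Cref{def:vfd_w_b}, one rewrites the integrand (modulo a non-negative squared term in $H_{M_t}$ and an $O(\sigma^2)$ remainder supported where $\nabla\zeta\ne 0$) as
\[
-2(u+\sigma)_+\zeta\bigl(\de_tu-T_xM_t\sprod D^2u\bigr).
\]

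To replace the unknown tangent planes $T_xM_t$ by $T_\ast$, I use that $\supp\phi_\sigma\subset\{-\sigma<u<\sigma\}$ is, because $\nabla u(x_0,t_0)\ne 0$, an $O(\sigma)$-thick tubular neighbourhood of the smooth hypersurface $\{u=0\}$. Combined with the bounded first variation of $M_t$, a tilt-excess argument shows that $M_t$-almost every approximate tangent plane on this slab is, up to $o_{\sigma\to0}(1)$ in $\Gr(m,d)$, perpendicular to $\nabla u(x_0,t_0)$; hence $T_xM_t\sprod D^2u\le\sup_{T\perp\nabla u(x_0,t_0)}T\sprod D^2u+o(1)$ there. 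Together with Step~1 this gives $\de_tu-T_xM_t\sprod D^2u\le-\delta+o(1)$ on $\supp M_t\cap\supp\phi_\sigma$. The clearing-out lemma (\Cref{cout_lemma}), which provides $M_{t_0-s}(B_{\sqrt s}(x_0))\ge cs^{m/2}$ for small $s>0$, together with $(u+\sigma)_+\asymp\sigma$ on this support, then bounds the right-hand side of \eqref{e:BF_def} from below by $c\sigma^2$ for some $c>0$ uniform in $\sigma$. Dividing by $\sigma^2$ and letting $\sigma\to 0$ contradicts the $o(\sigma^2)$ estimate on the left-hand side.

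The main obstacle is the tilt control on the thin slab: the Brakke flow definition imposes no regularity on $M_t$ beyond rectifiability and bounded first variation, so an Allard-type tilt excess estimate for integral varifolds with $L^2$ mean curvature is needed. A geometric alternative avoiding this step is to construct an explicit smooth MCF barrier tangent to $T_\ast$ at $x_0$, lying strictly on the $\{u>0\}$ side for $t<t_0$ close to $t_0$, and then invoke \Cref{cout_lemma} to conclude that $\Sigma$ would have to cross such a barrier, contradicting $u\le 0$ on $\Sigma\cap\{t\le t_0\}$.
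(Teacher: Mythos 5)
Your overall strategy (test \eqref{e:BF_def} with a power of the positive part of the test function, integrate by parts via \eqref{eq:first_var}, and contradict the fact that $(x_0,t_0)\in\supp M$) is the same as the paper's, but your proof has a genuine gap exactly where you flag "the main obstacle": the claim that $M_t$-a.e.\ approximate tangent plane on the thin slab $\{-\sigma<u<\sigma\}$ is, up to $o_{\sigma\to0}(1)$, perpendicular to $\nabla u(x_0,t_0)$. Nothing in the definition of an integral Brakke flow rules out tangent planes pointing in arbitrary directions on a set of positive measure inside that slab; an Allard-type tilt-excess estimate would at best give an $L^2$-averaged bound in terms of height excess and $\int|H|^2$, and converting that into the pointwise (or correctly weighted) statement you need, uniformly as $\sigma\to0$, is precisely the hard analytic content that you do not supply. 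The alternative barrier argument you sketch is likewise only named, not constructed. A secondary weak point is the claimed lower bound "$\ge c\sigma^2$" for the right-hand side: the clearing-out lemma gives mass in balls $B_{\sqrt s}(x_0)$ at earlier times, but not mass in the set $\{u>-\sigma\}\cap\Sigma$, which by your strict-maximum reduction shrinks to the single point $(x_0,t_0)$; this part can be repaired qualitatively (deduce $M(\{u>-\sigma\}\cap Q)=0$ and contradict $\Sigma=\supp M$ from \Cref{rk:supporti_coincidono}), but as written the quantitative comparison of rates is not justified.

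The paper avoids the tilt problem entirely, and it is worth seeing how. In Step 1 it proves only the weaker inequality $\de_tu-\trace_mD^2u\ge0$, where $\trace_mD^2u$ is the sum of the $m$ smallest eigenvalues: since $T\sprod D^2u\ge\trace_mD^2u$ for \emph{every} $T\in\Gr(m,d)$, the term $\dive_{M_t}\nabla\phi$ can be estimated from below with no information whatsoever about the tangent planes of $M_t$. The constraint $T\perp\nabla u$ is then recovered in Step 2 by a penalization: applying Step 1 to $u_j=u+\tfrac j2u^2$ produces minimizing planes $T_j$ with $j|T_j\nabla u|^2$ bounded, which forces any subsequential limit $\bar T$ to satisfy $\bar T\perp\nabla u$ and $\bar T\sprod D^2u\le\de_tu$. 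In other words, the perpendicularity of the relevant plane to $\nabla u$ is \emph{extracted} from the variational structure rather than \emph{assumed} about the varifold; this is the idea your argument is missing, and without it (or a full tilt-excess proof) your proposal does not close.
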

\begin{proof}
	\newcommand{\backball}{\pball^-}
	This proposition is a corollary of the results in \cite[Section 13]{White:avoidance}, see also \cite{AmbrosioSoner}; for the reader's convenience, we give a self-contained proof, in the spirit of, for example, \cite{white_areablowup}.
	
	We may assume, without loss of generality, that $(x_0,t_0)=(0,0)$ and that $u|_{\Sigma\cap\{t\le0\}}$ has a strict local maximum at $(0,0)$ (if not, replace $u$ by $u-|x|^4-|t|^2$).
	
	\textbf{Step 1. }We first prove that
	\begin{equation}\notag
	\de_tu(0,0)-\trace_m D^2u(0,0)\ge0,
	\end{equation}
	where $\trace_mD^2u$ is the sum of the $m$ smallest eigenvalues of $D^2u$. Assume the result does not hold. Arguing as in \cite[Lemma 2.4]{white_areablowup}, we may assume that, for some $\rho>0$ and $\eps>0$ small, $u$ satisfies:
	\begin{enumerate}[(i)]
		\item \label{point:first} $\de_tu-\trace_mD^2u<-\eps<0$ in $Q_\rho$;
		\item $B_\rho\cap\Gamma=\emptyset$;
		\item \label{point:last} $u>\eps>0$ in $\Sigma\cap Q_{\rho/2}$ and $u<0$ in $\Sigma\cap\{t\le0\}\setminus Q_\rho$;
	\end{enumerate}
	We now let $\phi(x,t) = (u^+(x,t))^4$, where $u^+=\max\{u,0\}$, and we use $\phi$ as a test function for \eqref{e:BF_def}. Since $\phi(\cdot, -\rho^2)=0$ by assumption, we have
	\begin{align*}
	0&\le\int\phi(\cdot,0)\,dM_{0}\\
	&=\int\phi(\cdot,0)\,dM_{0} - \int\phi(\cdot,-\rho^2)\,dM_{-\rho^2}\\
	&\le\int_{-\rho^2}^0\int\bigg(-|H|^2\phi+H\cdot\nabla\phi + \de_t\phi  \bigg)\,dM_t\,dt
	\end{align*}
	where the last inequality is given by \eqref{e:BF_def} and we have set $H(\cdot,t) = H_{M_t}(\cdot)$ for a.e. $t$. We now use the fact that $\supp\phi\subset\Gamma^\compl$, thus
	\begin{equation}\notag
	\int H\cdot\nabla\phi\,dM_t = -\int\dive_{M_t}\nabla\phi\,dM_t
	\end{equation}
	for a.e. $t$. Since the term $|H|^2\phi$ is non-negative, we obtain from the above chain of inequalities:
	\begin{equation}\notag
	0\le\int_{-\rho^2}^0\int\bigg(-\dive_{M_t}\nabla\phi+\de_t\phi  \bigg)\,dM_t\,dt.
	\end{equation}
	Some straightforward computations show that
	\begin{equation}\notag
	\dive_{M_t}\nabla\phi = 4(u^+)^3\dive_{M_t}\nabla u\ge4(u^+)^3\trace_mD^2u
	\end{equation}
	and $\de_t\phi= 4(u^+)^3\de_tu$. Therefore 
	\begin{align*}
	0\le\int_{-\rho^2}^0\int4(u^+)^3\big(\de_tu-\trace_mD^2u\big)\,dM_t\,dt\le-4\eps^4M(Q_{\rho/2})
	\end{align*}
	where the last inequality is given by \Cref{point:first} and \Cref{point:last} above. In particular, it must be
	\begin{equation}\notag
	M(Q_{\rho/2})=0;
	\end{equation}
	however, by \Cref{rk:supporti_coincidono}, $(0,0)\in\Sigma=\supp M$, thus we reach a contradiction.
	
	\textbf{Step 2. }We now prove the general result. It is sufficient to show that one can find a $m$-dimensional subspace $\bar T$ of $\RR^d$ such that
	\begin{equation}\notag
	\de_tu (0,0)-\bar T\sprod D^2u(0,0)\ge0
	\end{equation}
	and $\bar T\nabla u(0,0)=0$.
	Without loss of generality, assume that $u(0,0)=0$. 
	Let $\psi_j(z) = z+\frac{j}{2}z^2$
	and let
	\begin{equation}\notag
	u_j(X) = \psi_j(u(X)).
	\end{equation}
	Then, for every $j$, $u_j|_{\Sigma\cap\{t\le0\}}$ has a local maximum at $(0,0)$. Therefore, by Step 1, there is a $m$-dimensional subspace $T_j$ of $\RR^d$ such that, at $(0,0)$,
	\begin{equation}\notag
	0\le \de_tu_j-T_j\sprod D^2u_j=\de_tu-T_j\sprod D^2u-jT_j\sprod(\nabla u\otimes\nabla u).
	\end{equation}
	Up to a subsequence, which we do not relabel, we have that $T_j\to \bar T$ for some $m$-dimensional subspace $\bar T$, and
	\begin{equation}\notag
	\bar T\sprod(\nabla u\otimes\nabla u)\le\liminf_j\frac{1}{j}(\de_tu-T_j\sprod D^2u)=0,
	\end{equation}
	thus $\bar T\perp \nabla u$. On the other hand, since $jT_j\colon(\nabla u\otimes\nabla u)\ge0$, we have
	\begin{equation}\notag
	\bar T\sprod D^2u \le\liminf_jT_j\sprod D^2u\le\liminf_j(\de_tu-j T_j\sprod(\nabla u\otimes\nabla u))\le\de_tu,
	\end{equation}
	as desired.
\end{proof}

Given a upper-semicontinuous function $u:\RR^{m,1}\to[0,1]\cup\{-\infty\}$ and a smooth function $\phi:\RR^{m,1}\to\RR$, we say that $\phi$ \textit{touches $u$ from above at $(x_0',t_0)\in \RR^{m,1}$} if there exists $r>0$ such that
\begin{equation}\notag
\begin{cases}
\phi(x',t)\ge u(x',t)\quad\mbox{for every }(x',t)\in Q_r^m(x_0',t_0),\\
\phi(x_0',t_0)=u(x'_0,t_0).
\end{cases}
\end{equation}

We recall the definition of Pucci's maximal operator (see, for instance, \cite[Section 2.2]{CC}). For a symmetric matrix $N\in\RR^{d\times d}$, we let
\begin{equation}
\Mc^+(N):=\Mc^+\Big(N,\frac{1}{2},2\Big)=\frac{1}{2}\sum_{\lambda_i<0}\lambda_i + 2\sum_{\lambda_i>0}\lambda_i,
\end{equation}
where $\lambda_i = \lambda_i(N)$ are the eigenvalues of $N$.
The following result is a consequence of \Cref{Maxprinciple}.

\begin{corollary}\label{cor:max_princ2}
	Let $\Mb\in\BF(\RR^{d,1})$ have bounded maximal density ratio. For every $(x',t)\in \RR^{m,1}$, let
	\begin{equation}\notag
	u(x',t) = \sup\{|z|\colon z\in\RR^{d-m}\mbox{ and }(x',z)\in\Sigma_{\Mb}(t)\}
	\end{equation}
	with the convention $\sup\emptyset=-\infty$ and assume that $u\le1$ everywhere. There is $\delta>0$ universal such that, whenever a smooth function $\phi:\RR^{m,1}\to\RR$ touches $u$ from above at $X_0'=(x_0',t_0)$ and $\max\{|D^2\phi(X_0')|,|\nabla\phi(X_0')|\}\le \delta$, it holds
	\begin{equation*}\label{eq:tesimax2}
	\de_t \phi (X_0')- \Mc^+(D^2\phi(X_0'))\le0.
	\end{equation*}
\end{corollary}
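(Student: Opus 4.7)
The plan is to lift the touching point from $\RR^{m,1}$ to $\RR^{d,1}$ and then invoke the maximum principle \Cref{Maxprinciple}. Since $\Sigma_\Mb$ is closed and $u\le 1$, the function $u$ is upper semicontinuous and its fibers in the $z$-direction are compact, so there exists $z_0\in\RR^{d-m}$ with $(x_0',z_0)\in\Sigma_\Mb(t_0)$ and $|z_0|=u(X_0')=\phi(X_0')=:\alpha\in[0,1]$. Since $\Mb$ has no boundary, the hypotheses of \Cref{Maxprinciple} are automatically verified as soon as one exhibits a smooth function on $\RR^{d,1}$ whose restriction to $\Sigma_\Mb\cap\{t\le t_0\}$ has a local maximum at $(x_0',z_0,t_0)$ with nonzero spatial gradient.

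For the main case $\alpha>0$ I will use $\tilde u(x',z,t):=|z|^2-\phi(x',t)^2$. Continuity gives $\phi>0$ near $X_0'$, and for $(x',z)\in\Sigma_\Mb(t)$ nearby one has $|z|\le u(x',t)\le\phi(x',t)$, hence $\tilde u\le 0$ with equality at $(x_0',z_0,t_0)$; the gradient $(-2\alpha\nabla\phi,2z_0)$ is nonzero because $\alpha=|z_0|>0$. Then \Cref{Maxprinciple} gives
\[
-2\alpha\,\partial_t\phi(X_0')\ \ge\ \inf_{T\in\Gr(m,d),\,T\perp\nabla\tilde u}\ T\sprod D^2\tilde u(x_0',z_0,t_0).
\]
To evaluate the infimum, I write an ONB $\{(a_i,b_i)\}$ of $T$: the perpendicularity constraint reads $b_i\cdot(z_0/|z_0|)=a_i\cdot\nabla\phi$, and upon substitution the cross terms in $T\sprod D^2\tilde u$ arising from $-2\nabla\phi\otimes\nabla\phi$ cancel. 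With $p=\nabla\phi$, $M=AA^T$ and the change of variables $\tilde A=(I+pp^T)^{1/2}A$, the problem reduces to
\[
\max\bigl\{\trace(\tilde M\tilde K):0\le\tilde M\le I\bigr\},\qquad \tilde K=\alpha(I+pp^T)^{-1/2}D^2\phi(I+pp^T)^{-1/2}+I.
\]
For $\delta$ small $\tilde K$ is positive definite, so the maximizer is $\tilde M=I$ and $\inf_T T\sprod D^2\tilde u=-2\alpha\bigl(\Delta\phi-\tfrac{p^TD^2\phi\,p}{1+|p|^2}\bigr)$. Dividing by $-2\alpha$ gives $\partial_t\phi\le\Delta\phi-\tfrac{p^TD^2\phi\,p}{1+|p|^2}$. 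A short case-split on the sign of the eigenvalues of $D^2\phi$, using $|p|\le\delta\le 1$, shows that $\bigl|\tfrac{p^TD^2\phi\,p}{1+|p|^2}\bigr|\le\tfrac12\max_i|\lambda_i(D^2\phi)|$ is dominated by the Pucci gap $\Mc^+(D^2\phi)-\Delta\phi=\sum_{+}\lambda_i+\tfrac12\sum_{-}|\lambda_j|$, which yields $\partial_t\phi\le\Mc^+(D^2\phi)$.

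In the degenerate case $\alpha=0$ the gradient of $|z|^2-\phi^2$ would vanish, so I add a linear perturbation: for an arbitrary unit $v\in\RR^{d-m}$ and small $\eps>0$, set $\tilde u_\eps(x',z,t):=|z|^2+\eps\langle z,v\rangle-(1+\eps)\phi(x',t)$. The bounds $|z|^2\le|z|\le\phi$ on $\Sigma_\Mb$ give $\tilde u_\eps\le(1+\eps)(|z|-\phi)\le 0$ with equality at $(x_0',0,t_0)$, and $\nabla\tilde u_\eps=(-(1+\eps)\nabla\phi,\eps v)\ne 0$. The analogous maximum-principle computation (with the same cancellations) forces the optimal $T$ to lie essentially in $\RR^m\times\{0\}$: if $\nabla\phi(X_0')=0$ it gives $\partial_t\phi\le\Delta\phi\le\Mc^+(D^2\phi)$, while if $\nabla\phi\ne 0$ one obtains the much stronger bound $\partial_t\phi\le\Delta\phi-\tfrac{p^TD^2\phi\,p}{|p|^2}-\tfrac{2}{1+\eps}$ (one basis vector of $T$ must lie in the $z$-direction, contributing $+2$ to the trace), which is again $\le\Mc^+(D^2\phi)$ for $\delta$ small. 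The main technical point is the constrained trace optimization in the nondegenerate case: the cancellation enforced by the perpendicularity constraint, combined with the change of variables by $(I+pp^T)^{1/2}$, is what converts a problem on the Grassmannian into a clean spectral maximization.
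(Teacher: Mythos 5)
Your proof is correct, and it follows the same high-level strategy as the paper's: lift the contact point to $\RR^{d,1}$, build a $C^2$ space-time function whose restriction to $\Sigma\cap\{t\le t_0\}$ has a local maximum with nonvanishing gradient there, invoke \Cref{Maxprinciple}, and then estimate the resulting infimum over the Grassmannian. The differences are in the two ingredients you have to supply by hand. The paper takes the test function $H(x,t)=\tfrac14|S'x|^2+x\cdot\e_d-\phi(Sx,t)$, which is \emph{linear} in the distinguished normal direction $\e_d$; this makes $\nabla H\neq0$ automatically, so the contact values $\alpha>0$ and $\alpha=0$ are treated uniformly and no perturbation/limit in $\eps$ is needed, whereas your $|z|^2-\phi^2$ degenerates at $\alpha=0$ and forces the separate case with $\tilde u_\eps$. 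In exchange, your evaluation of $\inf_{T\perp\nabla\tilde u}T\sprod D^2\tilde u$ is sharper: the reduction via $\tilde A=(I+pp^T)^{1/2}A$ to maximizing $\trace(\tilde M\tilde K)$ over $0\le\tilde M\le I$ is exact (I checked both that $\tilde M\le I$ always holds, by a Bessel-type argument applied to vectors of the form $(w',\beta\nu)$, and that $\tilde M=I$ is attained), and it yields the genuine graphical mean-curvature-flow inequality $\de_t\phi\le\Delta\phi-\frac{p^TD^2\phi\,p}{1+|p|^2}$ — exactly the stronger statement the paper only mentions in the remark after the corollary — from which the Pucci bound follows by your eigenvalue comparison. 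The paper instead settles for the crude two-case estimate $|T-S|\le c_1$ versus $|T-S|\ge c_1$, which is shorter but gives only the Pucci form. Both routes are sound; yours costs an extra limiting argument at $\alpha=0$ but proves more at the contact point.
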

\let \oldeta \eta
\renewcommand{\eta}{\zeta}
\begin{proof}
	We assume $x_0'=0$ and $t_0=0$.
	Notice that, since $\Sigma$ is closed and $u(0,0)=\phi(0,0)$, the supremum in the definition of $u$ is attained and, without loss of generality, we may assume that the contact point is $x_0=\phi(0,0) \e_{d}\in\Sigma_{0}$. 
	We let $S = \Span\{\e_1,\dots,\e_m\}$ and $S' = \Span\{\e_{m+1},\dots,\e_{d-1}\}$, so that $\RR^d = S+S'+\Span\{\e_d\}$.
	Consider the function
	\begin{equation*}
	H(x,t) = \frac{1}{4}\big|S' x\big|^2+x\cdot\e_d - \phi(Sx,t).
	\end{equation*}
	By assumption, in a neighborhood of $(x_0,0)$ it holds
	\begin{equation*}
	|S^\perp x|\le \phi(Sx,t)\le2
	\end{equation*}
	for every $(x,t)\in\Sigma$; therefore it can be checked that $H|_{\Sigma\cap\{t\le0\}}\le0$ in the same neighborhood. Since $H(x_0,0)=0$, $H|_{\Sigma\cap\{t\le0\}}$ has a local maximum at $(x_0,0)$. Hence, by \Cref{Maxprinciple}, it holds
	\begin{equation}\label{eq:max_00}
	\de_tH(x_0,0)-\inf_{T\perp\nabla H(x_0,0)} T\colon D^2H(x_0,0)\ge0.
	\end{equation}
	We now estimate the two summands in the above inequality. In order to do so, we first remark that
	\begin{equation*}
	\nabla H(x_0,0) = \begin{pmatrix}
	-\nabla\phi(0,0)\\0\\1
	\end{pmatrix}
	\qquad
	D^2H(x_0,0) = \begin{pmatrix}
	-D^2\phi(0,0)&0&0\\0&\frac{1}{2}\Id_{S'}&0\\0&0&0
	\end{pmatrix}.
	\end{equation*}
	Consider $T\in\Gr(m,d)$ and an orthonormal basis $\eta_1,\dots,\eta_m$ of $T$. Then
	\begin{align*}
	T\colon D^2H
	&= \sum_{i=1}^m\iprod{D^2H\eta_i}{\eta_i}\\
	&=\sum_{i=1}^m\bigg(-\iprod{D^2\phi(Sx,t)S\eta_i}{S\eta_i} + \frac{1}{2}\big|S'\eta_i\big|^2\bigg)\\
	&\ge-\sum_{i=1}^m\iprod{D^2\phi(Sx,t)S\eta_i}{S\eta_i}.
	\end{align*}
	In particular, $S\colon D^2H(x,t) = -\Delta\phi(Sx,t)$ and
	\begin{align*}
	T\colon D^2H &= S\colon D^2H + (T-S)\colon D^2H\\
	&\ge -\Delta\phi - |T-S||D^2\phi|,
	\end{align*}
	Now, if $|T-S|\le c_1$, then the above inequality yields that, for some small $c_1$ universal,
	\begin{equation*}
	T\colon D^2H(x_0,0)\ge - \Mc^+(D^2\phi(0,0)).
	\end{equation*}
	On the other hand, if $|T-S|\ge c_1$, then we may choose an orthonormal basis $\eta_1,\dots,\eta_m$ of $T$ such that $|S^\perp\eta_1|\ge c_2$ for some $c_2$ universal. Since we are also assuming $T\perp\nabla H(x_0,0)$, we have
	\begin{equation*}
	0 = \eta_1\cdot \nabla H(x_0,0) = -S\eta_1\cdot\nabla\phi(0,0) + \eta_1\cdot \e_d
	\end{equation*}
	thus, in particular, $|\eta_1\cdot\e_d|\le|\nabla\phi|\le\delta$ and
	\begin{equation*}
	|S'\eta_1| \ge |S^\perp\eta_1|-|\eta_1\cdot \e_d|\ge c_2-\delta\ge\frac{c_2}{2},
	\end{equation*}
	provided $\delta\le c_2/2$. 
	Therefore
	\begin{align*}
	T\colon D^2H(x_0,0)\ge-\sum_{i=1}^m\iprod{D^2\phi(0,0)\eta_i}{\eta_i} + \frac{1}{2}|S'\eta_1|^2\ge - \Delta\phi(0,0)+\frac{c_2^2}{8}\ge-C\delta + \frac{c_2^2}{8}
	\end{align*}
	for some $C$ universal, since $|D^2\phi(0,0)|\le\delta$ by assumption.
	We may choose $\delta$ smaller, if needed, so that
	\begin{equation*}
	- \Mc^+(D^2\phi(0,0))\le C\delta\le -C\delta + \frac{c_2^2}{8}.
	\end{equation*}
	Therefore, whether $|T-S|\le c_1$ or not, it holds
	\begin{equation*}
		T\colon D^2H(x_0,0)\ge-\Mc^+(D^2\phi(0,0)).
	\end{equation*}
	
	We conclude the proof by remarking that
	\begin{equation*}
	\de_t H(x_0,0) = -\de_t\phi(0,0),
	\end{equation*}
	thus \eqref{eq:max_00} gives the desired result.
	
\end{proof}
\renewcommand{\eta}{\oldeta}

\begin{remark}
	With some more accurate computations, one may show that, actually, at the contact point $\phi$ satisfies the following inequality:
	\begin{equation*}
		\de_t \phi - \sqrt{1+|\nabla\phi|^2}\dive\bigg(\frac{\nabla\phi}{\sqrt{1+|\nabla\phi|^2}}\bigg)\le0.	
	\end{equation*}
	However, the weaker result proved in \Cref{cor:max_princ2} will be sufficient for the rest of the paper.
\end{remark}

\section{Improvement of flatness}\label{sec:IOF}
This section is the core of the present work. We prove that if a Brakke flow with boundary is sufficiently flat in $Q_1$, then its flatness can be improved at a smaller universal scale. This is going to allow us to prove the desired $C^{1,\beta}$ regularity: see \Cref{sec:final}.

We introduce the following notation. 
We fix a $m$-dimensional subspace of $\RR^d$, which we denote by $S$, and a $(m-1)$-dimensional subspace of $\RR^d$, which we denote by $\Gamma_0$, such that $\Gamma_0\subset S$. Up to changing coordinates in $\RR^d$, we shall assume for the rest of the present section that $S=\Span\{\e_1,\dots,\e_{m}\}$ and that $\Gamma_0=\Span\{\e_1,\dots,\e_{m-1}\}$. We also let $S^+=S\cap\{x_{m}>0\}$.

Given a $(m-1)$-dimensional submanifold $\Gamma$ of $B_R$, we write $\Gamma\in\Flat_\alpha(\delta,B_R)$ if the following hold true:
\begin{itemize}
	\item $\Gamma$ is a $C^{1,\alpha}$ submanifold of $B_R$ and $[\Gamma]_{C^{1,\alpha}(B_R)}\le\delta R^{-\alpha}$.
	\item $0\in\Gamma$ and $T_0\Gamma=\Gamma_0$.
\end{itemize}

In passing, we remark that if $\Gamma\in\Flat_\alpha(\delta,B_R)$ and $\theta>0$, then $\theta\Gamma\in\Flat_\alpha(\delta,B_{\theta R})$.

Moreover, if $\Gamma\in\Flat_\alpha(\delta,B_R)$, and $\delta$ is smaller than some constant depending only on $\alpha$, then there exists $\gamma:\Gamma_0\cap B_R\to\Gamma_0^\perp$ such that $|\gamma(0)|=|\nabla\gamma(0)|=0$, $||\gamma||_{C^{1,\alpha}(B_R)}\le\delta R^{-\alpha}$ and
\begin{equation*}\label{eq:gamma_graph}
\Gamma=\{x+\gamma(x)\colon x\in \Gamma_0\cap B_R\}\cap B_R;
\end{equation*}
given $\Gamma\in\Flat_\alpha(\delta,B_R)$, we will always implicitly define $\gamma$ as above.

The following is the main result of the present section.

\begin{theorem}[Improvement of flatness]\label{thm:IOF}
	For every $E_0$ and $\alpha$, there exist constants $\Lambda,\eps_0, \eta, \beta$ (small) and $C$ (large) with the following property. Let $\eps\le\eps_0$, $\Gamma\in\Flat_\alpha(\eps,B_1)$ and $\Mb\in\BF(B_1\times[-\Lambda,0],\Gamma)$ be such that $(0,0)\in\Sigma_{\Mb}$,
	\begin{equation*}\label{eq:hp_iof_flat}
	\Sigma_\Mb\subset\{(z,\tau)\colon \dist(z,S^+)\le\eps\},
	\end{equation*}
	\begin{equation}\notag
	\sup_{t\in[-\Lambda,0]}M_t(B_1)\le E_0
	\end{equation}
	and
	\begin{equation}\label{eq:double_dens_0}
	\int_{B_1}\hk(\cdot,-\Lambda)\,dM_{-\Lambda}\le\frac{3}{4}.
	\end{equation}	
	Then there exists a half plane $T^+$ of the form
	\begin{equation}\label{eq:half_plane}
	T^+ = \{x+w\zeta:x\in\Gamma_0,w>0\}
	\end{equation}
	for some $\zeta\in\Gamma_0^\perp$ with $|\zeta-\e_{m}|\le C\eps$, such that
	\begin{equation}\label{eq:IOF_conclusion}
	\Sigma_\Mb\cap Q_\eta\subset\bigg\{(x,t)\colon \dist(x,T^+)\le\eta^{1+\beta}\eps\bigg\}.
	\end{equation}
\end{theorem}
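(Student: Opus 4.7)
The plan is to argue by compactness and contradiction in the style of Savin--Wang. Suppose the conclusion fails. Then for every choice of small constants $\eta, \beta$ and large $C$ (taken from a countable dense set), there exist sequences $\eps_k \searrow 0$, submanifolds $\Gamma^k \in \Flat_\alpha(\eps_k, B_1)$ and flows $\Mb^k \in \BF(B_1 \times [-\Lambda,0], \Gamma^k)$ satisfying the three standing hypotheses of the theorem, but for which no half-plane $T^+_k$ of the form \eqref{eq:half_plane} with $|\zeta_k - \e_m| \le C\eps_k$ makes \eqref{eq:IOF_conclusion} hold. The bound on $\sup_t M^k_t(B_1)$, together with \eqref{eq:double_dens_0}, Huisken monotonicity (\Cref{prop:monoton}) and the clearing-out lemma (\Cref{cout_lemma}), allow us to control $\mdr(\Mb^k, B_{3/4} \times [-\Lambda/2, 0])$ uniformly in $k$, and hence (\Cref{rk:supporti_coincidono}) $\Sigma_{\Mb^k} = \supp M^k$, with non-degenerate mass near any space-time point of $\Sigma_{\Mb^k}$.

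The first core step is a compactness statement: define the rescaled space-time sets
\[
\widetilde\Sigma^k := \bigl\{(x, \eps_k^{-1} (y-S y), t) : (x+(y-Sy),t) \in \Sigma_{\Mb^k},\ x \in S\bigr\}
\]
obtained by $\eps_k$-dilating the normal component of $\Sigma_{\Mb^k}$. Combining the Harnack/oscillation-decay estimate of \Cref{subsec:osc_decay} with the boundary barrier of \Cref{subsec:bdry}, I expect to show that $\widetilde\Sigma^k$, intersected with a fixed space-time cylinder $B_{1/2} \cap S \times \RR^{d-m} \times [-\Lambda/2, 0]$, converges in local Hausdorff distance to the graph of a continuous function $u_\infty : (S^+ \cap B_{1/2}) \times (-\Lambda/2, 0] \to S^\perp$, with $u_\infty$ bounded by $1$ and with $u_\infty \equiv 0$ on the boundary piece $\partial S^+ \cap B_{1/2}$ (this last assertion is exactly the role of the boundary barrier, recalling that the rescaled boundaries $\eps_k^{-1}\gamma^k$ stay uniformly bounded by the $C^{1,\alpha}$ flatness assumption on $\Gamma^k$).

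The second core step is to identify the equation satisfied by $u_\infty$. For any smooth $\phi$ touching $u_\infty$ strictly from above at an interior point $X_0'$, a standard upper semicontinuity argument produces, for large $k$, smooth functions $\phi_k := \eps_k \phi + o(\eps_k)$ (after pulling back the rescaling) touching the corresponding upper envelope of $\Sigma_{\Mb^k}$ from above at points $X_0^{k} \to X_0'$. Since $|D^2 \phi_k|, |\nabla \phi_k| = O(\eps_k)$ are eventually below the threshold $\delta$ of \Cref{cor:max_princ2}, that corollary gives
\[
\de_t \phi_k(X_0^k) - \Mc^+(D^2 \phi_k(X_0^k)) \le 0.
\]
Dividing by $\eps_k$ and passing to the limit, the Pucci operator $\Mc^+$ applied to $\eps_k D^2\phi$ yields $\Mc^+(D^2\phi)$ by $1$-homogeneity, so $u_\infty$ is a viscosity subsolution of $\de_t u - \Mc^+(D^2 u) \le 0$; the mirror argument (using the lower envelope and the analogous lower maximum principle derived by symmetry) gives $\de_t u - \Mc^-(D^2 u) \ge 0$. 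Hence $u_\infty$ is a viscosity solution of a fully nonlinear uniformly parabolic equation with $u_\infty=0$ on $\partial S^+$.

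The third step is to run Schauder/boundary regularity for this limit equation. By interior and boundary $C^{2,\beta}$ estimates (Krylov--Safonov and their boundary version for the half-space with flat Dirichlet datum), $u_\infty$ is smooth near $(0,0)$ and vanishes on $\partial S^+$ together with all tangential derivatives, so in a neighborhood of $0$ it is approximated by an affine map $\ell(x) = x \cdot v$ (with $v \in \Gamma_0^\perp$ since $u_\infty$ vanishes on $\partial S^+ \cap B_{1/2}$) to order $\eta^{1+\beta}$: $|u_\infty(x, t) - \ell(x)| \le C_0 \eta^{1+\beta}$ on $Q_\eta^m$. Choosing $\zeta_k$ so that $\eps_k^{-1}(\zeta_k - \e_m) \to v$, the corresponding half-plane $T^+_k$ satisfies \eqref{eq:IOF_conclusion} at scale $\eta$ for all sufficiently large $k$ provided $\eta$ was fixed so that $C_0 \eta^{1+\beta} < \frac{1}{2} \eta^{1+\beta} \cdot 2$, contradicting the failure of the theorem for this choice of $\eta$.

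The main obstacle I expect is the compactness step: proving that the rescaled supports really converge to a single-valued graph (and not to a multi-valued or degenerate set), uniformly up to the boundary $\Gamma_0$. Interior oscillation decay via the Harnack inequality handles compact subsets of $S^+$, but near $\partial S^+$ one needs a two-sided barrier carefully calibrated to $\Gamma$'s $C^{1,\alpha}$ flatness in order to trap the rescaled heights in an equicontinuous family, and to rule out loss of mass near the boundary despite the $\beta_M$ contribution to the first variation. Once this compactness is in place, the limit-equation and Schauder steps are relatively routine.
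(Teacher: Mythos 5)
Your setup (contradiction/compactness, normal rescaling by $\eps_k^{-1}$, Hausdorff convergence of the rescaled supports to a graph via the boundary barrier and the Harnack-type oscillation decay) matches the paper's Lemmas \ref{lemma:comp_hyperplane} and \ref{lemma:unif_conv}. The genuine gap is in your second and third steps, where you identify the limit "equation" and invoke regularity for it. Passing \Cref{cor:max_princ2} to the limit only gives that $u_\infty$ satisfies $\de_t u - \Mc^+(D^2u)\le 0$ in the viscosity sense; even granting the mirror inequality $\de_t u - \Mc^-(D^2u)\ge0$, this pair of extremal inequalities does not single out any equation — it only places $u_\infty$ in the parabolic Krylov--Safonov class $S(\tfrac12,2)$, whose members are merely $C^{\varsigma}$ for some small $\varsigma$ (solutions of linear uniformly parabolic equations with measurable coefficients all lie in this class and are in general not $C^1$). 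Krylov--Safonov does not provide $C^{2,\beta}$ or even $C^{1,\beta}$ estimates, so you cannot extract the affine approximation $|u_\infty-\ell|\le C_0\eta^{1+\beta}$ needed to close the improvement of flatness. The Pucci bound of \Cref{cor:max_princ2} is used in the paper only for the measure estimate inside the Harnack step (\Cref{lemma:measure}), not to identify the limit.

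The paper's \Cref{lemma:visc_sol'} instead shows directly that $u_\infty$ \emph{equals} the caloric extension $h$ of its own boundary data, by sliding: if $|u_\infty-h|+\omega|x'|^2$ had an interior maximum, one builds a test function on the unrescaled $\Sigma^j$ and contradicts the full maximum principle \Cref{Maxprinciple} (the infimum over $m$-planes orthogonal to the gradient, not the Pucci relaxation). The mechanism is that the normal rescaling makes the Hessian contribution $|S^\perp\nu|^2/\eps_j^2$ blow up unless the extremal plane $T$ is within $O(\omega)$ of $S$, and for such $T$ the trace $T\sprod D^2$ reproduces the Laplacian of $h$ — this is exactly where the heat equation emerges, and this step is one-sided, so no "lower maximum principle" is needed. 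Indeed, no such lower principle is available for the support of a Brakke flow (your function $u$ is a supremum of heights and only inherits the subsolution inequality from \Cref{cor:max_princ2}); the two-sidedness is recovered in the paper from the constancy theorem and the clearing-out lemma, not from a symmetric barrier argument. A secondary inaccuracy: the limit boundary datum is not $0$ but the $C^{1,\alpha}$ limit $g$ of the rescaled $S^\perp\gamma^j$ (only $g(0)=Dg(0)=0$), which is why \Cref{lem:reg_heat_eq} is stated for nontrivial Dirichlet data.
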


The proof of \Cref{thm:IOF} is based on a contradiction and compactness argument. If one assumes the conclusion does not hold, then it is possible to find a sequence of Brakke flows which are flatter and flatter and satisfy the other assumptions of \Cref{thm:IOF}, for which, however, no half-plane of the form \eqref{eq:half_plane} can be found so that the flatness improves at any smaller scale. However, for such flows, one shows that, after an appropriate rescaling, the space-time tracks must converge in the Hausdorff distance to the graph of a solution to the heat equation. It is then sufficient to use Schauder estimates for the heat equation with Dirichlet boundary condition to recover the conclusion.

The central point of the proof is to obtain the desired compactness.
This is achieved via the two following results.
The first one provides a control over the oscillations near $\Gamma$ of the space-time support of a Brakke flow satisfying the assumptions of \Cref{thm:IOF}.
\begin{proposition}[Boundary behaviour]\label{prop:barrierSTATEMENT}
	For every $E_0$ and $\alpha$, there exist small constants $c_1$ and $r_1$ with the following property. Let $\Mb$ and $\Gamma$ satisfy the assumptions of \Cref{thm:IOF}.  Then
	\begin{equation}\notag
	\Sigma\cap Q_{r_1}\subset\bigg\{(x-\gamma(x''))\cdot \e_{m}\ge -\eps^2+ c_1\frac{|S^\perp(x-\gamma(x''))|^2}{2\eps^2}\bigg\}.
	\end{equation}
\end{proposition}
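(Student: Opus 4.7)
The plan is a barrier argument based on the maximum principle (\Cref{Maxprinciple}), via a sliding paraboloid family. Setting $y(x) = x - \gamma(x'')$ with $\gamma$ replaced by a standard mollification at a scale $\rho$ to be chosen, I would consider the family
\[
\Pi_s := \Big\{x : y(x)\cdot \e_{m} = -s + \tfrac{c_1}{2\eps^2}\big|S^\perp y(x)\big|^2\Big\}
\]
parametrized by $s \in \RR$. For $s$ large, $\Sigma_\Mb \cap \bar Q_{r_1}$ lies entirely above $\Pi_s$, i.e.\ $F_s(x) := y_m(x) + s - \frac{c_1}{2\eps^2}|S^\perp y(x)|^2 \ge 0$. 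As $s$ decreases let $s_*$ be the smallest value at which $F_{s_*}$ still satisfies $F_{s_*}|_{\Sigma \cap \bar Q_{r_1}} \ge 0$ with equality at some point $(x_0, t_0)$; the proposition reduces to showing $s_* \le \eps^2$.

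At such a contact point $(x_0,t_0)$, the function $-F_{s_*}\big|_\Sigma$ attains a local maximum equal to zero; if the contact is interior (not on $\Gamma$ nor on $\partial_p Q_{r_1}$), \Cref{Maxprinciple} yields $\inf_{T \perp \nabla(-F_{s_*})} T \sprod D^2(-F_{s_*})(x_0,t_0) \le 0$. A direct calculation in the coordinates $(x'', x_m, x''')$ gives $\nabla(-F_{s_*}) = -\e_m + O(\eps^{\alpha}) + O(c_1/\eps)$, whose dominant component is $-\e_m$, together with
\[
D^2(-F_{s_*}) = \tfrac{c_1}{\eps^2}\Id_{S^\perp} + O(1) + O(\eps\,\rho^{\alpha-1}),
\]
the last term arising from the mollified Hessian of $\gamma$. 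The key geometric step is dimension counting: any $m$-plane $T$ perpendicular to $\nabla(-F_{s_*})$ cannot contain $\e_m$, so up to the small rotations above, $T \subseteq \Gamma_0 \oplus S^\perp$, and hence $\dim(T \cap S^\perp) \ge m + (d-m) - (d-1) = 1$. Writing an orthonormal frame $\tau_i = a_i \e_m + b_i + c_i$ of $T$ with $b_i \in \Gamma_0$, $c_i \in S^\perp$, this forces $\sum_i |c_i|^2 \ge 1 - o(1)$ and therefore $T \sprod D^2(-F_{s_*}) \ge c_1/(4\eps^2) - C$, strictly positive for $\eps$ small compared to a power of $c_1$. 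This contradicts the maximum-principle inequality.

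The main technical obstacle is to rule out a first contact on $\partial_p Q_{r_1}$. Contact on $\Gamma$ is trivially ruled out, since $F_{s_*} = s_* > 0$ on $\Gamma$. On the lateral boundary $\partial B_{r_1}$ and on $\{t = -\Lambda\}$, however, the bare flatness bounds give only $F_{s_*} \ge s_* - \eps - c_1/2$, which is not sufficient: the crude sliding argument would only yield $s_* \lesssim \eps + c_1/2$ rather than the sharp $s_* \le \eps^2$. To close this gap one has to upgrade the initial-time bound using \eqref{eq:double_dens_0} and the Huisken monotonicity formula (\Cref{prop:monoton}) to show that, at $t = -\Lambda$, $\Sigma_{-\Lambda}$ is so close to $S^+$ that $F_{s_*} > 0$ holds there whenever $s_* > \eps^2$; the lateral boundary is handled similarly, either by passing to a smaller cylinder $Q_{r_1/2}$ after a preliminary reduction or by adding an auxiliary penalty $K(|x|^2 - r_1^2/2)$ to the test function, rebalancing the Hessian estimate to preserve the sign. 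The $C^{1,\alpha}$ regularity of $\gamma$ is handled by choosing $\rho \sim \eps^{1/(1+\alpha)}$, so that $\|\gamma - \gamma_\rho\|_{C^0} \lesssim \eps^2$ and $\|D^2 \gamma_\rho\|_\infty \lesssim \eps^{2\alpha/(1+\alpha)}$, both negligible for the computation above.
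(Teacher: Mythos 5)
Your interior analysis at the contact point is essentially the paper's: a paraboloid barrier whose Hessian in the $S^\perp$ directions is of order $c_1/\eps^2$, combined with the observation that any $m$-plane orthogonal to the (nearly vertical) gradient must carry a definite $S^\perp$-component, so that $T\sprod D^2$ of the barrier is large and positive while the maximum principle forces it to be $\le \de_t$ of the barrier. Your treatment of the $C^{1,\alpha}$ boundary by mollification at scale $\rho\sim\eps^{1/(1+\alpha)}$ is a legitimate variant of the paper's device (a first-order Taylor polynomial of $\gamma_m$ at the base point minus a quadratic correction), and your penalty for the lateral boundary matches the paper's extra term $-(C+1)|x''-\bar x''|^2$.

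The gap is in the initial-time slice of the parabolic boundary, and your proposed fix there does not work. The hypotheses only give $\dist(\cdot,S^+)\le\eps$ at \emph{every} time, including the initial one; nothing in \eqref{eq:double_dens_0} or the monotonicity formula upgrades this to $O(\eps^2)$-closeness at $t=-\Lambda$. Indeed the support at the initial time can genuinely sit at height $x_m\approx-\eps$ near the edge (a slightly translated unit-density half-plane inside the $\eps$-slab is consistent with all the density hypotheses), so the claim that $F_{s_*}>0$ on $\Sigma_{-\Lambda}$ whenever $s_*>\eps^2$ is simply false, and your static (time-independent) family $F_s$ can have its first contact on $\{t=-\Lambda\}$, where \Cref{Maxprinciple} is unavailable. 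The statement being proved is a genuine parabolic improvement over a waiting time, which is why the conclusion holds only on the smaller cylinder $Q_{r_1}$. The paper's resolution is to make the barrier time-dependent: it adds a function $f(t)$ with $f(\mbox{initial time})=-4c$, so that the crude bound $g+h\le 2c+2\eps$ already gives negativity at the initial time, and $f$ then rises to $\ge-\omega/2$ over the waiting period with $f'\le 8c$. The slow rise is harmless because the maximum principle only needs $\de_tH=f'\le 8c$ to be dominated by the Hessian gain $\sim c/\eps$, which it is for $\eps$ small. To repair your argument you should replace the static family $\Pi_s$ by such a sliding-in-time barrier; the rest of your computation then goes through with minor adjustments.
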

Here, $x''$ denotes the point $(x_1,\dots,x_{m-1},0,\dots,0)\in\Gamma_0$.

With the above results at hand, we may prove that, if the Brakke flow is flat enough, then assumption \eqref{eq:double_dens_0} gives a Holder-type modulus of continuity in parabolic cylinders whose radii are controlled from below by some power of the flatness $\eps$.

\begin{proposition}[Decay of oscillations]\label{prop:holder_STATEMENT}
	For every $E_0$ and $\alpha$, there exist constants $\varsigma, C_2$ and $r_2$ with the following property. Let $\Mb$ and $\Gamma$ satisfy the assumptions of \Cref{thm:IOF} and let $(x,t),(y,s)\in\Sigma\cap Q_{r_2}$. If $\min\{x_{m},y_{m}\}\ge 2\eps$ and
	\begin{equation}\notag
	\rho:=\rho((x',t),(y',s))\ge C_2\eps^\varsigma,
	\end{equation}
	then
	\begin{equation}\notag
	|S^\perp(x-y)|\le C_2\eps \rho^\varsigma.
	\end{equation}
\end{proposition}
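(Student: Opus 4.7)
The plan is to apply the parabolic maximum principle \Cref{cor:max_princ2} to one-sided height envelopes of $\Sigma_\Mb$ in each normal direction, then iterate a Krylov-Safonov-type oscillation decay in the spirit of Wang's adaptation of Savin's viscosity method. For each unit vector $v\in S^\perp$, introduce
\begin{equation*}
u_v^+(x',t) = \sup\{x\cdot v:x\in\Sigma_\Mb(t),\ Sx=x'\},\qquad u_v^-(x',t) = \inf\{x\cdot v:x\in\Sigma_\Mb(t),\ Sx=x'\},
\end{equation*}
on $\{x'\in S^+ : x'_m\ge 2\eps\}\times[-r_2^2,0]$, with the convention $u_v^+=-\infty$, $u_v^-=+\infty$ where the projected slice is empty. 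The flatness hypothesis gives $|u_v^\pm|\le\eps$ where finite, while the restriction $x'_m\ge 2\eps$ separates us quantitatively from $\Gamma$, so the boundary plays no role in this region.

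Arguing essentially as in the proof of \Cref{cor:max_princ2}, but using the auxiliary function $H(x,t) = A|(S^\perp\cap v^\perp)x|^2 + x\cdot v - \phi(Sx,t)$ in place of $A|S^\perp x|^2 + x\cdot\e_d - \phi(Sx,t)$, one shows that for $\eps$ small $u_v^+$ is a viscosity subsolution of $\partial_t\phi-\Mc^+(D^2\phi)\le 0$ and $u_v^-$ a viscosity supersolution of $\partial_t\phi-\Mc^-(D^2\phi)\ge 0$, tested only against smooth functions with $|\nabla\phi|,|D^2\phi|\le\delta$, where $\delta$ is the universal constant of \Cref{cor:max_princ2}. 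Parabolic rescaling preserves these Pucci inequalities, but the smallness constraint means that on a cylinder of radius $r$ a touching paraboloid realizing oscillation $\sim\eps r^\varsigma$ has $|D^2\phi|\sim\eps r^{\varsigma-2}$, so the argument proceeds only as long as $r\gtrsim(\eps/\delta)^{1/(2-\varsigma)}$; for $\varsigma$ chosen small enough, this lower bound is dominated by $C_2\eps^\varsigma$, which explains the scale cut-off in the statement.

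Within this admissible scale range, the standard parabolic Krylov-Safonov oscillation decay for sub/supersolutions of Pucci equations (adapted by Wang to the restricted viscosity class of test functions with small derivatives) yields a universal $\theta\in(0,1)$ with
\begin{equation*}
\osc_{Q^m_{r/2}(X'_0)} u_v^\pm \le \theta\,\osc_{Q^m_r(X'_0)} u_v^\pm
\end{equation*}
for any center $X'_0$ with $x'_m\ge 4\eps$ and any radius $r\ge C_2\eps^\varsigma/2$. Iterating gives the H\"older estimate $\osc_{Q^m_r(X'_0)} u_v^\pm \le C\eps r^\varsigma$ down to scale $C_2\eps^\varsigma$. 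To conclude, given $(x,t),(y,s)\in\Sigma\cap Q_{r_2}$ as in the statement, pick $v\in S^\perp$ with $|v|=1$ such that $|S^\perp(x-y)|\le\sqrt{d-m}\,|(x-y)\cdot v|$, sandwich $x\cdot v$ and $y\cdot v$ between $u_v^\pm$ evaluated at $(x',t)$ and $(y',s)$, and apply the oscillation decay at parabolic scale $\rho$ centered at $(x',t)$.

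The main obstacle is verifying the restricted viscosity sub/supersolution property for $u_v^\pm$: the envelopes are only semicontinuous, and one must check that the quadratic penalty in the directions of $S^\perp\cap v^\perp$ forces the contact point of the auxiliary function $H$ on $\Sigma$ to project (under $S$) onto the point where $\phi$ touches $u_v^\pm$, so that \Cref{cor:max_princ2} applies cleanly at that point. Once this is established, the Krylov-Safonov iteration is the classical Savin-Wang machinery in the restricted viscosity class, and the lower bound $\rho\ge C_2\eps^\varsigma$ arises precisely from the admissibility constraint $|D^2\phi|\le\delta$ breaking down at smaller scales.
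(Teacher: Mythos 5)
Your argument has a genuine gap, and it is structural rather than technical: nowhere does the proof use the Gaussian density hypothesis \eqref{eq:double_dens_0} (or its propagated form from \Cref{prop:propagation}), and without that hypothesis the statement is false. Concretely, take $\Sigma_t$ to be two parallel static unit-density $m$-planes at distance $\eps$ in a direction $v\in S^\perp$. This is a Brakke flow with bounded maximal density ratio, it is $\eps$-flat, it satisfies the maximum principle and \Cref{cor:max_princ2}, and both envelopes $u_v^\pm$ are constant (hence perfect sub/supersolutions with zero oscillation); yet for $x$ on one plane and $y$ on the other one has $|S^\perp(x-y)|\geq\eps$, which violates the conclusion $|S^\perp(x-y)|\le C_2\eps\rho^{\varsigma}$ for small $\rho$. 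The breakdown in your scheme is the final sandwiching step: controlling $\osc u_v^+$ and $\osc u_v^-$ separately does not control the \emph{thickness} $u_v^+-u_v^-$ of $\Sigma$ over a single fiber, and it is precisely this quantity that can be of size $\eps$ when the flow has two sheets. A secondary but also real problem is the invocation of Krylov--Safonov: oscillation decay requires the \emph{same} function to be both a subsolution of $\de_t\phi-\Mc^+(D^2\phi)\le0$ and a supersolution of $\de_t\phi-\Mc^-(D^2\phi)\ge0$; $u_v^+$ carries only the subsolution information and $u_v^-$ only the supersolution information, so neither envelope individually satisfies the hypotheses of the oscillation-decay lemma.

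The paper's proof is built exactly around supplying the missing ingredient. It works only with sup-type envelopes (subsolutions), applies Wang's \emph{measure estimate} (\Cref{lemma:measure}) to each of two competing reference heights, and concludes that if two points of $\Sigma$ were separated by $2(1-\eta)\eps$ in $S^\perp$ then $\Sigma$ would have to cover two essentially disjoint sheets $A_1,A_2$ over most of a parabolic ball; integrating the backward heat kernel over $A_1\cup A_2$ then forces $\int\hk\,dM\ge\theta^2(2-\tfrac34\delta)$, contradicting the propagated density bound $\le2-\delta$. That density bound is the single-sheet information your argument is missing, and it cannot be recovered from the viscosity properties of the height envelopes alone. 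If you want to salvage your approach you would need to insert an analogue of \Cref{lemma:harnack_0}: a step that converts the Gaussian density bound into the estimate $\sup(u_v^+-u_v^-)\le(1-\eta)\eps$ at the smaller scale, after which an iteration in the spirit of \Cref{prop:holder_better} does close the argument.
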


The two above results are sufficient to prove, via an Arzelà-Ascoli-type argument, the convergence in the Hausdorff distance which we have described. 

Before proceeding, it is worth spending a few words on how the constants in \Cref{thm:IOF} will be chosen.
\begin{itemize}
	\item We fix $\Lambda$ once and for all in \Cref{prop:propagation}; it will be needed to prove that $\Mb$ has bounded maximal density ratio in a smaller parabolic cylinder, $Q_{r_3}$.
	\item Propositions \ref{prop:barrierSTATEMENT}, \ref{prop:holder_STATEMENT} and \ref{prop:propagation} hold true provided $\eps_0$ is small enough (depending on $E_0$). We will therefore always assume that this is the case. The final value of $\eps_0$ will not be determined explicitly, as \Cref{thm:IOF} is proved by compactness.
	\item The constants $r_1$ and $r_2$ chosen in Propositions \ref{prop:barrierSTATEMENT} and \ref{prop:holder_STATEMENT} are chosen smaller than $r_3$ (determined in \Cref{prop:propagation}) and they depend on $E_0$ and $\alpha$. These two constants will give upper bounds for $\eta$. We will then give a further upper bound for $\eta$ coming from the regularity properties of the heat equation.
	\item Lastly, the constants $C$ and $\beta$ depend only on $\alpha$ and on regularity properties for the heat equation.
\end{itemize}

We now briefly describe the rest of the present section.
The proof of \Cref{thm:IOF} is given in \Cref{subsec:iof_proof}. In \Cref{subsec:iof_prelim}, we state and prove some lemmas which will be useful in the following.
The proofs of \Cref{prop:barrierSTATEMENT} and \Cref{prop:holder_STATEMENT} are postponed to \Cref{subsec:bdry} and \Cref{subsec:osc_decay}, respectively.

\subsection{Preliminaries to the proof of \texorpdfstring{\Cref{thm:IOF}}{the improvement of flatness}}\label{subsec:iof_prelim}
Some remarks on the assumptions of \Cref{thm:IOF} will be needed for the proofs of \Cref{prop:barrierSTATEMENT}, \Cref{prop:holder_STATEMENT} and, ultimately, of \Cref{thm:IOF} itself.
We begin by showing that \eqref{eq:double_dens_0} propagates in the interior of the domain. 
\begin{proposition}[Propagation of small density]\label{prop:propagation}
	For every $E_0$ and $\alpha$, there is $r_3$ small with the following property. Let $\Mb$ and $\Gamma$ satisfy the assumptions of \Cref{thm:IOF}. Then, for every $(x,t)\in Q_{r_3}$ and for every $\tau\in(-r_3^2,0)$, it holds
	\begin{equation}\notag
	\int_{B_{r_3}(x)}\hk(\cdot-x,\tau)\,dM_t\le\frac{7}{8}+\frac{1}{2}\chi_{\Gamma^\compl}(x).
	\end{equation}
\end{proposition}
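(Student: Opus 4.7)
The plan is to apply the Huisken monotonicity formula (\Cref{prop:monoton}) with the backward heat kernel centered at the shifted space-time point $(x,t-\tau)$, running from the initial time $-\Lambda$ (where the smallness \eqref{eq:double_dens_0} is assumed) up to time $t$. I would fix a radius $R$ independent of $r_3$, say $R=1/3$, so that $B_{2R}(x)\subset B_1$ for every $x\in B_{r_3}$ with $r_3\le 1/3$. Since the cut-off $\phin$ in the definition of $\hk_R$ equals $1$ on $B_R(x)\supset B_{r_3}(x)$, we have
\begin{equation*}
\int_{B_{r_3}(x)}\hk(y-x,\tau)\,dM_t(y)\le\int\hk_R(y-x,\tau)\,dM_t(y),
\end{equation*}
and monotonicity gives $\int\hk_R(\cdot-x,\tau)\,dM_t\le I_0+I_\Gamma+I_{\mathrm{tail}}$, where $I_0:=\int\hk_R(\cdot-x,\sigma_0)\,dM_{-\Lambda}$ with $\sigma_0:=-\Lambda-t+\tau$ lying within $2r_3^2$ of $-\Lambda$, $I_\Gamma$ is the boundary term and $I_{\mathrm{tail}}$ is the Gaussian tail term.

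For $I_0$ I would compare the shifted kernel $\hk(\cdot-x,\sigma_0)$ with $\hk(\cdot,-\Lambda)$. Since $\sigma_0$ stays bounded away from $0$ once $\Lambda$ is fixed and $r_3\ll 1$, $\hk$ is smooth on a neighborhood of $\overline{B_2}\times\{-\Lambda\}$; a mean-value estimate together with $\phin\le 1$, the mass bound $M_{-\Lambda}(B_1)\le E_0$ and hypothesis \eqref{eq:double_dens_0} give
\begin{equation*}
I_0\le\int\hk(\cdot,-\Lambda)\,dM_{-\Lambda}+C(\Lambda)E_0 r_3\le\tfrac{3}{4}+C(\Lambda)E_0 r_3,
\end{equation*}
where $C(\Lambda)$ controls $\|\nabla\hk\|_\infty$ and $\|\de_t\hk\|_\infty$ on the relevant set, and blows up as $\Lambda\to 0$.

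For $I_\Gamma$, I use $\nu_{M_\sigma}(y)\perp T_y\Gamma$ to bound $|\nu_{M_\sigma}\cdot\nabla\hk_R|\le|T_y\Gamma^\perp\nabla\hk_R|$ and then apply \Cref{lemma:brutto} with base point $(x,t-\tau)$ and a small prescribed $\delta$ (say $\delta=1/32$), obtaining $I_\Gamma\le\tfrac{1}{2}\chi_{\Gamma^\compl}(x)+\tfrac{1}{32}$. This requires the integration window $[-\Lambda,t]$ to be contained in $[(t-\tau)-\tilde\Lambda R^2,\,t-\tau]$, where $\tilde\Lambda$ is the constant from \Cref{lemma:brutto} associated with $\delta$, which is ensured by taking $\Lambda\le\tilde\Lambda R^2/2$ and $r_3\le\sqrt{\tilde\Lambda R^2/4}$; it also requires $R\le c/\eps$, which holds provided $\eps_0$ is small enough that $\eps_0 R\le c$. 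For $I_{\mathrm{tail}}$, the uniform mass bound gives $I_{\mathrm{tail}}\le CE_0(\Lambda+2r_3^2)/R^{m+2}$, made small by choosing $\Lambda$ small.

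Summing:
\begin{equation*}
\tfrac{3}{4}+C(\Lambda)E_0 r_3+\tfrac{1}{2}\chi_{\Gamma^\compl}(x)+\tfrac{1}{32}+CE_0\Lambda/R^{m+2}\le\tfrac{7}{8}+\tfrac{1}{2}\chi_{\Gamma^\compl}(x)
\end{equation*}
provided $\Lambda$ is small enough to make the tail term and the window condition of \Cref{lemma:brutto} compatible, and then $r_3$ is chosen small enough (depending on $\Lambda$) to absorb $C(\Lambda)E_0 r_3$. The main obstacle is precisely this ordering of small constants: $\Lambda$ must be fixed first (small enough for both the tail estimate and the applicability of \Cref{lemma:brutto}'s time window), and only afterwards can $r_3$ be chosen small — the constant $C(\Lambda)$ is then large but finite, so the main-term error is still absorbed. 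This order of choices is exactly the one described in the bullet points after \Cref{thm:IOF}.
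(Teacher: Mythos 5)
Your proof is correct and follows essentially the same route as the paper: Huisken monotonicity centered at the shifted point $(x,t-\tau)$, the three-term decomposition with \Cref{lemma:brutto} controlling the boundary term, and the same ordering of constants ($\Lambda$ first, then $r_3$). The only difference is cosmetic — you absorb the kernel-shift error additively via the mass bound $M_{-\Lambda}(B_1)\le E_0$, whereas the paper turns it into a multiplicative factor $(1+2Lr/b)$ using a positive lower bound of $\hk(\cdot,-\Lambda)$ on $B_{1/2}$; both yield the same conclusion.
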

\begin{proof}	
	We fix positive constants $r_3\le\frac{1}{8}$, $\eps$, $\Lambda$ and $\delta$, all of which we will determine later; we always assume that $r_3$ is much smaller than $\Lambda$.
	For simplicity of notation, in this proof we set $r=r_3$.
	For $(x,t)\in Q_r$ and $\tau\in(-r^2,0)$, we let $t_0=t-\tau$. Then, by \Cref{prop:monoton}, it holds
	\begin{align}
	&\int_{B_r(x)}\hk(\cdot-x,t-t_0)\,dM_t\le\int\hk_{1/8}(\cdot-x,t-t_0)\,dM_t\notag\\
	&\qquad\le\int\hk_{1/8}(\cdot-x,-\Lambda-t_0)\,dM_{-\Lambda}\label{eq:boundamiquesta}\\
	&\qquad\quad+\int_{-\Lambda}^t\int\nu_M\cdot\nabla\hk_{1/8}(\cdot-x,s-t_0)\,d\Gamma\,d\tau+C E_0 (t+\Lambda)\notag.
	\end{align}
	
	By \Cref{lemma:brutto}, if $\eps$ and $\Lambda$ are small enough and $r$ is much smaller than $\Lambda$, then
	\begin{align*}
	&\int_{-\Lambda}^{t}\int\nu_M\cdot\nabla\hk_{1/8}(\cdot-x,s-t_0)\,d\Gamma\,d\tau\\
	&\qquad\le\int_{t_0-2\Lambda}^{t_0}\int|T_y\Gamma^\perp\nabla\hk_{1/8}(y-x,s-t_0)|\,d\Gamma(y)\,d\tau\\
	&\qquad\le\frac{1}{2}\chi_{\Gamma^\compl}(x)+\delta.
	\end{align*} 
	We then take $\Lambda$ even smaller so that $CE_0(t+\Lambda)\le CE_0\Lambda\le \delta$.
	
	So far, we have fixed $\eps$ and $\Lambda$ depending only on $E_0$ and $\delta$, and we have assumed that $r$ is much smaller than $\Lambda$. The last step is to choose $r$ even smaller in order to bound \eqref{eq:boundamiquesta} from above. To this end, we let $L$ be the Lipschitz constant of $\hk$ restricted to $\RR^d\times(-\infty,-\Lambda/2]$. Since $r$ is much smaller than $\Lambda$, then $-\Lambda-t_0\le-\Lambda/2$ and we can estimate, for every $y\in B_{1/4}(x)$,
	\begin{align*}
	\hk_{1/8}(y-x,-\Lambda-t_0)&\le\hk(y-x,-\Lambda-t_0)\\
	&\le \hk(y,-\Lambda) + L(|x|+|t_0|)\\
	&\le \hk(y,-\Lambda)+2Lr.
	\end{align*}
	Let now $b = b(\Lambda)>0$ be so small that $\hk(y,-\Lambda)\ge b$ if $|y|\le1/2$. In particular, assuming that $r\le1/4$, for every $y\in B_{1/4}(x)$, it holds
	\begin{equation}\notag
	\hk_{1/8}(y-x,-\Lambda-t_0)\le \bigg(1+\frac{2Lr}{b}\bigg)\hk(y,-\Lambda)
	\end{equation}
	The same bound holds, trivially, for any $y$ such that $|y-x|\ge1/4$. We now choose $r$ even smaller, if needed, so that $\frac{2Lr}{b}\le \delta$. Therefore we may bound
	\begin{equation}\notag
	\int\hk_{1/8}(\cdot-x,t_0+\Lambda)\,dM_{-\Lambda}
	\le (1+\delta)\int_{B_1}\hk(\cdot,-\Lambda)\,dM_{-\Lambda}\le\frac{3}{4}(1+\delta),
	\end{equation}
	which yields the desired conclusion, up to choosing $\delta$ small universal.
\end{proof}

\begin{corollary}[Bound on $\mdr(\Mb)$]\label{cor:MDR}
	Under the assumptions of \Cref{prop:propagation},
	there exist $E_1$ universal such that, for every $t\in[-r_3^2,0]$ and every $B_r(x)\subset B_{r_3}$, it holds
	\begin{equation}\label{eq:MDR_0}
	M_t(B_r(x))\le E_1 r^m.
	\end{equation}
	In particular, for every $(x,t)\in\Sigma_\Mb\cap Q_{r_3}$ and for every $r>0$ small enough, it holds
	\begin{equation}\label{eq:cout_equation}
	M_{t-c_1r^2}(B_r(x))\ge c_2 r^m
	\end{equation}
	for some $c_1,c_2$ small universal.
\end{corollary}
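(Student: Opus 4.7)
The corollary has two parts, and the second essentially reduces to the first via the clearing-out lemma already at our disposal. Concretely, once \eqref{eq:MDR_0} is in hand, $\Mb$ has bounded maximal density ratio on $B_{r_3} \times [-r_3^2, 0]$ with constant $E_1$, so \Cref{cout_lemma} applied at $(x,t) \in \Sigma_\Mb \cap Q_{r_3}$ with radius $r/4$ (instead of $r$) yields $M_{t - c_1' r^2/16}(B_r(x)) \ge c_2' (r/4)^m$ for all sufficiently small $r$, and relabelling constants gives \eqref{eq:cout_equation}. So the real work is proving \eqref{eq:MDR_0}.

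The plan for \eqref{eq:MDR_0} is to sandwich an integral of the backward heat kernel between a mass term and the propagation bound from \Cref{prop:propagation}. Fix $B_r(x) \subset B_{r_3}$ and $t \in [-r_3^2, 0]$, and test against $\Psi(\cdot - x, -r^2)$. On the one hand, for $y \in B_r(x)$ one has $|y-x|^2/(4 r^2) \le 1/4$, so
\begin{equation*}
\Psi(y - x, -r^2) \ge \frac{e^{-1/4}}{(4\pi)^{m/2}} \, r^{-m} =: c_m \, r^{-m},
\end{equation*}
which gives
\begin{equation*}
\int_{B_r(x)} \Psi(\cdot - x, -r^2) \, dM_t \ge c_m \, r^{-m} M_t(B_r(x)).
\end{equation*}
On the other hand, since $B_r(x) \subset B_{r_3}(x)$ and $-r^2 \in (-r_3^2, 0)$, one may apply \Cref{prop:propagation} (choosing $\tau = -r^2$) to obtain
\begin{equation*}
\int_{B_r(x)} \Psi(\cdot - x, -r^2) \, dM_t \le \int_{B_{r_3}(x)} \Psi(\cdot - x, -r^2) \, dM_t \le \frac{7}{8} + \frac{1}{2} \chi_{\Gamma^\compl}(x) \le \frac{11}{8}.
\end{equation*}
Combining the two inequalities yields $M_t(B_r(x)) \le (11/8) c_m^{-1} r^m$, so one can set $E_1 := (11/8) c_m^{-1}$, which depends only on $m$, hence is universal.

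I expect no real obstacle here beyond being careful with the scaling of the heat kernel and checking that the parameters line up; the bulk of the technical work has been absorbed into \Cref{prop:propagation} and \Cref{cout_lemma}. The only subtlety is that in the second part one must use the $r/4$ rescaling so that the conclusion is on $B_r(x)$ rather than $B_{4r}(x)$, but this is purely cosmetic and absorbed into the universal constants $c_1, c_2$.
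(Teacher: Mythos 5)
Your proof is correct and follows essentially the same route as the paper: the paper's one-line argument for \eqref{eq:MDR_0} is exactly the sandwich $M_t(B_r(x))\le Cr^m\int_{B_r(x)}\hk(\cdot-x,-r^2)\,dM_t\le 2Cr^m$ via the pointwise lower bound on the heat kernel and \Cref{prop:propagation}, and it likewise derives \eqref{eq:cout_equation} directly from \eqref{eq:MDR_0} and \Cref{cout_lemma}. Your explicit constant $c_m$ and the $r/4$ rescaling in the second part are just the details the paper leaves implicit.
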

\begin{proof}
	Let $x,t$ and $r$ as in the statement. Then
	\begin{equation}\notag
	M_t(B_r(x))\le C r^{m}\int_{B_r(x)}\hk(\cdot-x,-r^2)\,dM_t\le 2 C r^m.
	\end{equation}
	\eqref{eq:cout_equation} follows from \eqref{eq:MDR_0} and \Cref{cout_lemma}.
\end{proof}

\subsection{Proof of \texorpdfstring{\Cref{thm:IOF}}{the improvement of flatness}}\label{subsec:iof_proof}
As stated earlier, we are going to argue by contradiction and compactness. Namely, we fix $E_0$ and $\alpha$, we let $\Lambda$ be as specified in \Cref{prop:propagation} and we assume there exist $\eps_j\searrow0$ and two sequences $\{\Gamma^j\}$, $\{\Mb^j\}$ such that, for every $j$, $\Mb^j$ and $\Gamma^j$ satisfy the assumptions of \Cref{thm:IOF} with $\eps_0$ replaced by $\eps_j$.

In particular, we assume that $\Gamma^j\in\Flat_\alpha(\eps_j,B_1)$ and
\begin{equation}
\Sigma_{\Mb^j}\subset\{(z,\tau)\colon\dist(z,S^+)\le\eps_j\}.\label{eq:flatness_ripetuta}
\end{equation}

We also assume, for the sake of contradiction, that for no $j$ \eqref{eq:IOF_conclusion} is satisfied for any choice of $T^+$, $\eta$ and $\beta$.

In the following, we let $\gamma^j:\Gamma_0\cap B_1\to\Gamma_0^\perp$ be such that $\Gamma^j\cap B_1\subset\graph\gamma^j$, as in the definition of $\Flat_\alpha(\eps_j,B_1)$, and we let $\Sigma^j:=\Sigma_{\Mb^j}$.
We also fix $r_0 = \min\{r_1,r_2,r_3\}$, so that the conclusions of Propositions \ref{prop:barrierSTATEMENT}, \ref{prop:holder_STATEMENT}, \ref{prop:propagation} and of \Cref{cor:MDR} hold true in $Q_{r_0}$.

\begin{lemma}[Compactness and convergence to hyperplane]\label{lemma:comp_hyperplane}
	There exists a subsequence (not relabeled) such that, for almost every $t\in(-r_0^2,0]$,
	\begin{equation}\notag
	M^j_t\weakly\haus^{m}\rest S^+
	\end{equation}
	as Radon measures in $B_{r_0}$.
\end{lemma}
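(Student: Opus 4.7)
My plan is to extract a subsequential limit Brakke flow on $Q_{r_0}$ and identify it as the constant half-plane $(S^+\cap B_{r_0})$ via the constancy theorem \Cref{thm:constancy}. \Cref{cor:MDR} provides a uniform bound on the maximal density ratio of the $\Mb^j$ in $Q_{r_0}$, and together with the uniform mass bound coming from $E_0$ this allows me to invoke the compactness theory of integral Brakke flows with boundary \cite[Section 10]{whitebdry} to extract a subsequence (not relabeled) converging in the sense of Brakke flows to some $\Mb^\infty\in\BF(B_{r_0}\times\Tint{-r_0^2,0},\Gamma_0\cap B_{r_0})$; here I use that $[\Gamma^j]_{C^{1,\alpha}}\le\eps_j\to 0$ and $T_0\Gamma^j=\Gamma_0$ force $\Gamma^j\to\Gamma_0\cap B_{r_0}$ in $C^{1,\alpha}$. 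Such Brakke-flow convergence yields the a.e.\ weak convergence $M^j_t\weakly M^\infty_t$ on $B_{r_0}$, together with Hausdorff convergence of the space-time tracks on compact subsets of $Q_{r_0}$.

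Next I would verify the hypotheses of \Cref{thm:constancy} for $\Mb^\infty$ and $N=(S^+\cap B_{r_0})\times\Tint{-r_0^2,0}$ (after a trivial rescaling from $Q_{r_0}$ to $Q_1$). The set $N$ is a $C^{1,\beta}$-regular single sheet flow with boundary $\Gamma_0\cap B_{r_0}$ in the sense of \Cref{def:regular_flow} (the maps $f_t$ can simply be taken to be the identity). The flatness hypothesis \eqref{eq:flatness_ripetuta} passes to the Hausdorff limit of space-time tracks, giving $\Sigma_{\Mb^\infty}\subset N$. Since $(0,0)\in\Sigma_{\Mb^j}$ for every $j$, the lower mass estimate $M^j_{-c_1 r^2}(B_r)\ge c_2 r^m$ from \Cref{cor:MDR} survives the weak limit and shows $(0,0)\in\Sigma_{\Mb^\infty}$. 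The constancy theorem (non-empty boundary case) then identifies $M^\infty=\haus^{m,1}\rest N$, so $M^\infty_t=\haus^m\rest(S^+\cap B_{r_0})$ for a.e.\ $t\in\Tint{-r_0^2,0}$, which combined with $M^j_t\weakly M^\infty_t$ is exactly the claim.

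The main obstacle I anticipate is verifying cleanly that the limit flow really is a Brakke flow with boundary exactly $\Gamma_0\cap B_{r_0}$, rather than with empty boundary or with some extra singular boundary mass. This should follow from the weak convergence of the measures $\beta_{M^j_t}$, which are uniformly controlled by $\haus^{m-1}\rest\Gamma^j$ and whose limits cannot exceed $\haus^{m-1}\rest(\Gamma_0\cap B_{r_0})$. As a safety net, even if one were to lose the boundary condition in the limit, the $\Gamma=\emptyset$ case of \Cref{thm:constancy} together with the interior density bound (strictly less than $2$) coming from \Cref{prop:propagation} would force the multiplicity of the limit to be $1$ and yield the same identification.
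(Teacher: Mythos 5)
Your proposal is correct and follows essentially the same route as the paper: uniform density bounds from \Cref{cor:MDR} plus the compactness theorems of \cite[Section 10]{whitebdry} to extract a limit flow, the flatness hypothesis to confine its support to $S^+$, the clearing-out estimate \eqref{eq:cout_equation} to keep $(0,0)$ in the limit support, and \Cref{thm:constancy} to identify the limit as $\haus^m\rest S^+$. The extra care you take about the boundary of the limit flow is reasonable but not something the paper dwells on.
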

\begin{proof}
	By the Arzelà-Ascoli theorem,  $\gamma^j\to0$ in $C^1$ up to subsequences. By \Cref{cor:MDR}, we may apply the compactness theorems proven in \cite[Theorems 10.1 and 10.2]{whitebdry} and find a further subsequence (not relabeled) and $\Mb^\infty\in\BF(Q_{r_0},\Gamma_0)$ such that, for every $t\in(-r_0^2,0]$,
	\begin{equation}\notag
	M^j_t\weakly M^\infty_t.
	\end{equation}
	
	In particular, the weak convergence stated above and \eqref{eq:flatness_ripetuta} yield
	\begin{equation}\notag
	M^\infty_t((S^+)^\compl)=0
	\end{equation}
	for every $t\in(-r_0^2,0]$. 
	Therefore, by \Cref{def:bf}, for almost every $t$, there is an integer-valued function $\theta_t\in L^1_{loc}(S^+)$ so that
	\begin{equation}\notag
		M^\infty_t = \theta_t(\cdot)\haus^m\rest (S^+\cap B_{r_0}).
	\end{equation}
	By testing \eqref{eq:first_var} with vector fields $X\in C^1_c(B_{r_0}\setminus\Gamma_0;\RR^d)$ such that $S^\perp X=0$ everywhere, one deduces that for almost every $t$, $\theta_t(\cdot)$ is an integer-valued $W^{1,1}_{loc}$ function on $S^+$. Since $S^+\cap B_{r_0}$ is connected, $\theta_t(\cdot)$ must be constant for almost every $t$.
	Moreover, by \eqref{eq:cout_equation}, $(0,0)\in\Sigma_{\Mb^\infty}$, thus by \Cref{cout_lemma} it must be $\theta_t>0$ for every $t<0$. We conclude by remarking that, with the above remarks, for almost every $t$, $\beta_{M^\infty_t}=\theta_t\Hc^{m-1}\rest\Gamma_0$; then the assumption $M^\infty_t\in\Vc (B_{r_0},\Gamma_0)$ yields $\theta_t=1$.
\end{proof}

\newcommand{\tSigma}{\tilde\Sigma}
Before stating the next result, we define some objects that we will use in the rest of the subsection. First of all, let $F_\eps:\RR^d\to\RR^d$ be the map
\begin{equation}\notag
F_\eps(x)=\bigg(Sx,\frac{1}{\eps}S^\perp x\bigg);
\end{equation}
with a small abuse of notation, we use the same notation for the map $F_\eps\colon \RR^{d,1}\to\RR^{d,1}$ such that $F_\eps(x,t)=(F_\eps(x),t)$.
We now define
\begin{equation}\notag
\tSigma^j = F_{\eps_j}(\Sigma^j).
\end{equation}
Notice that, by \eqref{eq:flatness_ripetuta}, $\tSigma^j\subset \{(x,t)\colon|S^\perp x|\le1\}$ for every $j$.
For $j\in\NN$ and $(x',t)\in Q_{r_0}$, we define
\begin{equation}\notag
u^j(x',t) = \Big\{z\in \overline{B_1^{d-m}}\colon ((x',z),t)\in\tSigma^j\Big\};
\end{equation}
notice that such a set may well be empty or have more than one element. 
We also define $\tilde\gamma^j = F_{\eps_j}\circ\gamma^j$;
it is clear that
\begin{equation}\notag
	\tilde\gamma^j\cdot\e_{m}\to0\qquad\mbox{in }C^{1,\alpha}.
\end{equation}
Furthermore, since $||\tilde\gamma^j||_{C^{1,\alpha}(B_{r_0})}\le1$, by the Arzelà-Ascoli theorem and up to passing to a subsequence (which we do not relabel) we may find $g\colon B_{r_0}^{m-1}\to\overline{B_1^{d-m}}$ such that, for every $0<\varsigma<\alpha$, 
\begin{equation}\notag
S^\perp\tilde\gamma^j\to g\qquad\mbox{in }C^{1,\varsigma}
\end{equation}
and $\norm{g}_{C^{1,\varsigma}}\le 1$. 


In order to keep the notation light, in the following we denote by $E = \overline{B^m_{r_0}}\times\overline{B_1^{d-m}}\times[-r_0^2,0]\subset\RR^{d,1}$ and $E' = S(E) = \overline{Q^m_{r_0}}\subset\RR^{m,1}$.
We also let $E'_+ = E'\cap\{x_m\ge0\}$.
\begin{lemma}[Uniform convergence]\label{lemma:unif_conv}
	There exist a subsequence (not relabeled) and
	$u\colon E'_+\to\overline{B_1^{d-m}}$
	with the following properties:
	\begin{enumerate}[(i)]
		\item \label{concl:hausdorff} it holds
		\begin{equation}\label{eq:haus_conv1}
		d_H(\tSigma^j\cap E;\graph u)\to0
		\end{equation}
		as $j\to\infty$.
		\item \label{concl:bordo} For every $(x'',t)\in \overline{Q_{r_0}^{m-1}}$ it holds $u((x'',0),t)=g(x'')$. 
		\item \label{concl:holder} For every $X',Y'\in E'_+$,
		\begin{equation*}\label{eq:holder_cty_final}
		|u(X')-u(Y')|\le 2C_2\rho(X',Y')^\varsigma,
		\end{equation*}
		where $C_2$ and $\varsigma$ are as in \Cref{prop:holder_STATEMENT}.
	\end{enumerate}
\end{lemma}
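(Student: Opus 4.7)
The plan is a standard Arzel\`a--Ascoli-type argument in the Hausdorff topology, combined with the interior H\"older estimate (Proposition \ref{prop:holder_STATEMENT}) to secure single-valuedness and H\"older regularity of the limit, and with the barrier (Proposition \ref{prop:barrierSTATEMENT}) to pin down the boundary trace. Since $E$ is compact, Blaschke's selection theorem lets me extract a subsequence (not relabeled) along which the closed sets $K^j:=\tSigma^j\cap E$ converge in Hausdorff distance to a closed set $K\subset E$. The goal is to show $K=\graph u$ for some $u\colon E'_+\to\overline{B^{d-m}_1}$, which immediately yields (i).

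Existence of a value over each $(x',t)\in E'_+$ is obtained by combining Lemma \ref{lemma:comp_hyperplane} (for a.e.\ $t$, every point of $\overline{S^+}$ lies in $\liminf_j\supp M^j_t$) with the clearing-out bound \eqref{eq:cout_equation}, applied via a diagonal argument to cover both the non-generic $t$'s and the boundary $\{x'_m=0\}$: one produces $x_j\in\Sigma^j(t_j)$ with $(x_j,t_j)\to((x',0),t)$, rescales to $F_{\eps_j}(x_j)\in\overline{B^m_{r_0}}\times\overline{B^{d-m}_1}$, and extracts a convergent subsequence whose limit belongs to $K$ and has $S$-projection $x'$.

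For single-valuedness at interior points ($x'_m>0$) I would use a three-point trick. If $((x',z_1),t),((x',z_2),t)\in K$, pick small $\eta>0$ and, by the existence step, produce $((x'+\eta\e_1,z_3),t)\in K$. Taking approximating sequences in $\tSigma^j$ and unwinding $F_{\eps_j}$, for $j$ large the approximants lie at height $\ge x'_m/2>2\eps_j$ and at base-point parabolic distance $\ge\eta/2>C_2\eps_j^\varsigma$, so Proposition \ref{prop:holder_STATEMENT} yields $|z_i-z_3|\le 2^\varsigma C_2\eta^\varsigma$ for $i=1,2$; letting $\eta\searrow 0$ forces $z_1=z_2$. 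Both uniqueness at boundary points and property (ii) follow from Proposition \ref{prop:barrierSTATEMENT}, which in rescaled coordinates becomes
\begin{equation*}
\tilde x_m\ge \gamma^j_m(x'')-\eps_j^2+\tfrac{c_1}{2}\bigl|S^\perp\tilde x-S^\perp\tilde\gamma^j(x'')\bigr|^2;
\end{equation*}
passing to the limit using $\gamma^j_m\to 0$ and $S^\perp\tilde\gamma^j\to g$ gives $0\ge\tfrac{c_1}{2}|z-g(x'')|^2$ for every $((x'',0),z,t)\in K$, hence $z=g(x'')$.

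Finally, (iii) follows by passing Proposition \ref{prop:holder_STATEMENT} to the limit: for interior $X',Y'\in E'_+$ with $\rho(X',Y')>0$, approximating sequences are eventually admissible (the lower bound $\rho\ge C_2\eps_j^\varsigma$ is eventually met), and the rescaled estimate $|S^\perp(\tilde x_j-\tilde y_j)|\le C_2\rho_j^\varsigma$ passes to the limit with the factor $2^\varsigma$ (coming from $\rho_j\le 2\rho(X',Y')$) absorbed into $2C_2$; the boundary case reduces to the interior by continuity via (ii) and the H\"older regularity of $g$. The main obstacle is the single-valuedness: Proposition \ref{prop:holder_STATEMENT} is silent at scale $0$, so one cannot directly compare two points of $\tSigma^j$ projecting to the same $(x',t)$; the three-point construction that injects a positive auxiliary scale $\eta$ and then lets $\eta\searrow 0$ is precisely what circumvents this.
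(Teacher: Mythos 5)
Your proposal is correct and follows essentially the same route as the paper: Blaschke selection for the Hausdorff limit, non-emptiness of the fibers via Lemma \ref{lemma:comp_hyperplane}, the rescaled barrier of Proposition \ref{prop:barrierSTATEMENT} for the boundary trace, and Proposition \ref{prop:holder_STATEMENT} for the H\"older bound; your ``three-point trick'' for single-valuedness is exactly the paper's argument (its auxiliary point $W$ at base-distance $\approx\omega$ plays the role of your point at distance $\eta$). The only cosmetic difference is that the paper treats the mixed boundary--interior case of (iii) directly with the quantitative barrier estimate $|u(y',s)-g(y'')|\le C(y'_m)^{1/2}$ rather than by a continuity reduction, but that estimate is already contained in the rescaled barrier inequality you wrote down.
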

In \eqref{eq:haus_conv1}, by $\graph u$ we mean the set $\{(x',u(x',t),t)\colon(x',t)\in E'_+\}\subset E$.
\begin{proof}
	\textbf{Step 1: Hausdorff convergence. }By \Cref{lemma:comp_hyperplane}, $\tSigma^j\cap E\neq\emptyset$ eventually. Thus one may extract a subsequence (not relabeled) so that $\tSigma^j\cap E$ converges in the Hausdorff distance to some closed set $\tSigma\subset E$. Since, by assumption, $\tSigma^j\subset\{x_{m}\ge-\eps_j\}$, it must also be $\tSigma\subset\{x_{m}\ge0\}$. We define the set-valued function
	\begin{align}
	u(x',t) =  \big\{y\in\overline{B_1^{d-m}}\colon ((x',y),t)\in\tSigma\big\}\label{eq:u}
	\end{align}
	for $(x',t)\in E'_+$. 
	
	\textbf{Step 2: $u(x',t)\neq\emptyset$ for every $(x',t)\in E'_+$. }Assume, by contradiction, that there exists $(x',t)\in E'_+\setminus S(\tSigma)$ (recall the notation $S(x,t)=(Sx,t)=(x',t)$). Then, since $S(\tSigma)$ is closed, there exists an open neighborhood $U'$ of $(x',t)$ such that $U'\subset(S(\tSigma))^\compl$. If we let $U = S^{-1}(U')\subset\RR^{d,1}$, then by \Cref{lemma:comp_hyperplane} and Fatou's lemma
	\begin{align*}
	0<\haus^{m,1}(U\cap (S^+\times\RR))\le\liminf_jM^j(U),
	\end{align*}
	thus $M^j(U)>0$ eventually. In particular, by taking smaller and smaller neighborhoods, one can pick a subsequence $j_\ell\to\infty$ and a sequence $X_\ell\in\Sigma^{j_\ell}$ so that $S(X_\ell)\to(x',t)$.	
	By using the maps $F_{\eps_j}$ defined above, we rescale in the directions of $S^\perp$ and find that, up to subsequences, there exists $z\in \overline{B_1^{d-m}}$ such that
	\begin{equation}\notag
	\tSigma^{j_{\ell}}\ni F_{\eps_{j_\ell}}(X_\ell)\to((x',z),t).
	\end{equation}
	By Step 1, $((x',z),t)\in\tSigma$, which contradicts the fact that $u(x',t)=\emptyset$.
	
	\textbf{Step 3: $u((x'',0),t)=\{g(x'')\}$. }
	Let $(x'',t)\in \overline{Q_{r_0}^{m-1}}$.
	If $y\in u((x'',0),t)$, then by Step 1 there exists a sequence $(x_j,t_j)\in\tSigma^j$ such that $x_j\to((x'',0),y)$ and $t_j\to t$.
	In particular, by \Cref{prop:barrierSTATEMENT}, it holds
	\begin{align*}\notag
	|S^\perp(x_j-\tilde\gamma^j(x_j''))|
	&= \frac{1}{\eps_j}\big|S^\perp\big(F_{\eps_j}^{-1}(x_j)-\gamma^j(x_j'')\big)\big|\\
	&\le C|x_j\cdot\e_{m}+\eps_j+\eps_j^2|^{1/2} \\
	&\longrightarrow 0
	\end{align*}
	as $j\to\infty$.
	Since $S^\perp\tilde\gamma^j$ converges uniformly to $g$ and $S^\perp x_j\to y$, it must be
	\begin{equation}\notag
	u((x'',0),t)=\{g(x'')\}.
	\end{equation}
	
	\textbf{Step 4: $u(x',t)$ is a singleton and \Cref{concl:holder} holds true. } For $i=1,2$, let $X_i = (x_i,t_i)\in \tSigma$. Let also $\rho:=\rho(S(X_1),S(X_2))$ and, without loss of generality, assume $(x_2)_{m}\ge (x_1)_{m}$. 
	
	\textit{Case 1: $(x_1)_m=0$. }By Step 1 and \Cref{prop:barrierSTATEMENT}, we have
	\begin{equation}\notag
	|S^\perp x_2- g(x_2'')|\le C (x_2)_m^{1/2}\le C\rho^{1/2}.
	\end{equation}
	Moreover, $|S^\perp x_1-g(x_2'')|=|g(x_1'')-g(x_2'')|\le C\rho$.
	Thus
	\begin{equation}\notag
	|S^\perp x_2-S^\perp x_1|\le C\rho^{1/2}+C\rho\le C\rho^\varsigma.
	\end{equation}
	
	\textit{Case 2: $(x_1)_m>0$ and $\rho=0$. }In this case, we prove that $S^\perp(x_1)=S^\perp(x_2)$. Fix $\omega$ much smaller than $(x_1)_m$. By Steps 1 and 2, we may pick $j$ large enough and three points $Y_1, Y_2, W=(w,\tau)\in\tSigma^j$ such that $\rho(X_i,Y_i)\le\omega$ and $2\omega\le\rho(S(W),S(X_i))\le 4\omega$. Up to choosing $j$ larger, we may assume that $\omega\ge C\eps_j^\varsigma$ and $(y_i)_m\ge(x_i)_m-\omega\ge2\eps_j$. Therefore, by \Cref{prop:holder_STATEMENT}, since $\rho(S(W),S(Y_i))\ge\omega$, we estimate
	\begin{align*}
	&|S^\perp(x_1-x_2)|\\
	&\qquad\le |S^\perp(x_1-y_1)|+|S^\perp(x_2-y_2)|+|S^\perp(y_1-w)|+|S^\perp(y_2-w)|\\
	&\qquad\le2\omega+C\omega^\varsigma.
	\end{align*}
	Since $\omega>0$ is arbitrary, it holds $S^\perp(x_1)=S^\perp(x_2)$. In particular, $u(x',t)$ is a singleton for every $(x',t)\in E'_+$. With a small abuse of notation, from here onwards, we will denote by $u(x',t)\in \RR^{d-m}$ the only element of the set defined in \eqref{eq:u}.
	
	\textit{Case 3: $(x_1)_m>0$ and $\rho>0$. }By Steps 1 and 2, we may choose $j$ large enough and two points $Y_1,Y_2$ such that the following hold true:
	\begin{enumerate}
		\item $Y_1,Y_2\in\tSigma^j$;
		\item for $i=1,2$, $\rho(X_i,Y_i)<\rho/8$;
		\item $C\eps_j^\varsigma \le\rho/2$;
		\item for $i=1,2$, $(y_i)_m\ge2\eps_j$.
	\end{enumerate}
	Then, by \Cref{prop:holder_STATEMENT}, it holds
	\begin{align*}
	|u(SX_1)-u(SX_2)|&\le |u(SX_1)-S^\perp(y_1)|+|u(SX_2)-S^\perp(y_2)|+|S^\perp(y_1-y_2)|\\
	&\le 2\frac{\rho}{8} + C\rho^\varsigma\le 2C\rho^\varsigma,
	\end{align*}
	as desired.
	
\end{proof}

The rest of the proof consists in proving that $u$ defined in \Cref{lemma:unif_conv} solves the heat equation in the interior of $E'_+$. To this end, we recall some facts about the heat equation. 
First, recall that $E'_+ = \overline{Q_{r_0}^m}\cap \{x_m\ge0\}$ and let us introduce the sets
\begin{gather*}
	\Int_p E'_+ = E'_+\setminus \de_pE'_+,\\
	(E'_+)_r = \{x'\in\RR^m\colon |x'|\le r_0-r\mbox{ and }x'_m\ge r\}\times[-r_0^2+r^2,0].
\end{gather*}
Notice that $\Int_p E'_+ = \bigcup_{r>0}(E'_+)_r$.

\begin{lemma}[Interior regularity for the heat equation]\label{lemma:heat_prop1}
	Let $g\in C(\de_pE'_+)$. Then there exists $h\in C^\infty(\Int_p E'_+)\cap C(E'_+)$ such that
	\begin{equation}\notag
	\begin{cases}
	\de_th-\Delta h = 0\qquad&\mbox{in }\Int_p E'_+\\
	h = g&\mbox{on }\de_pE'_+.
	\end{cases}
	\end{equation}
	Moreover, for every $r>0$ there exists $C>0$ such that, for every $(x',t)\in (E'_+)_r$, it holds
	\begin{equation}\notag
	\max\{|h(x',t)|,|\nabla h(x',t)|,|D^2h(x',t)|,|\de_th(x',t)|\}\le C\norm{g}_{L^\infty(\de_pE'_+)}.
	\end{equation}
\end{lemma}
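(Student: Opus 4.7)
The statement is essentially classical parabolic theory on a Lipschitz cylinder, so the plan is to invoke standard tools rather than reprove them from scratch; the only slightly nontrivial point is continuity up to the boundary at the \textquotedblleft corner\textquotedblright{} $\{|x'|=r_0,\, x'_m=0\}\times[-r_0^2,0]$.

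First I would dispose of existence. The base of the cylinder, $\{x'\in\RR^m\colon |x'|\le r_0,\, x'_m\ge 0\}$, is a Lipschitz domain; accordingly the parabolic boundary $\de_p E'_+$ is a compact set on which every point admits an interior barrier for the heat operator (for example, a truncated fundamental solution centered slightly outside an exterior cone, which one can always place at a Lipschitz boundary point). Therefore Perron's method for the heat equation produces a function $h$ that solves $\de_th-\Delta h=0$ in $\Int_p E'_+$ and attains the boundary datum $g$ continuously on $\de_p E'_+$; interior smoothness $h\in C^\infty(\Int_p E'_+)$ is automatic by hypoellipticity of the heat operator. An equivalent construction, if one prefers to avoid Perron, is to exhaust $E'_+$ by smooth subdomains $E_k\subset E'_+$ whose parabolic boundaries converge Hausdorff-wise to $\de_pE'_+$, solve the Dirichlet problem on each $E_k$ with boundary datum obtained by extending $g$ continuously to all of $\RR^{m,1}$, and pass to the limit using the barrier-based equicontinuity estimates and the parabolic weak maximum principle.

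Second, the $L^\infty$ bound $\norm{h}_{L^\infty(E'_+)}\le\norm{g}_{L^\infty(\de_p E'_+)}$ is immediate from the parabolic weak maximum principle applied to $\pm h$.

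Third, for the interior estimates at $(x',t)\in (E'_+)_r$, the key observation is that the parabolic cylinder $Q_r^m(x',t)$ is contained in $\Int_p E'_+$, by the very definition of $(E'_+)_r$. Classical interior estimates for caloric functions (see, e.g., the interior Schauder bounds in Krylov's book) then give, for every integer $k\ge 0$,
\begin{equation*}
r^k\norm{D^k h}_{L^\infty(Q_{r/2}^m(x',t))}+r^{2k}\norm{\de_t^k h}_{L^\infty(Q_{r/2}^m(x',t))}\le C_k\norm{h}_{L^\infty(Q_r^m(x',t))}\le C_k\norm{g}_{L^\infty(\de_p E'_+)},
\end{equation*}
where the last inequality uses Step~2. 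Specializing to $k=0,1,2$ and using $\de_t h=\Delta h$ to convert the time derivative into second space derivatives yields exactly the four bounds stated.

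The only part that is not purely bookkeeping is justifying continuity up to the \enquote{edge} $\{|x'|=r_0,\,x'_m=0\}\times[-r_0^2,0]$, which is where the boundary of $E'_+$ fails to be $C^1$. This is where the Lipschitz exterior-cone barrier is needed; since the obstacle is standard and well documented, I would only sketch the barrier construction and cite the corresponding general result.
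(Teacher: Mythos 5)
Your plan is correct. The paper in fact states this lemma without any proof, treating it as classical parabolic theory, and your route (Perron's method with exterior-cone barriers on the Lipschitz base, the weak maximum principle for the $L^\infty$ bound, and interior caloric estimates on the cylinders $Q_r^m(x',t)\subset\Int_p E'_+$ guaranteed by the definition of $(E'_+)_r$, together with $\de_t h=\Delta h$ to control the time derivative) is exactly the standard argument the paper implicitly relies on; all steps, including the barrier at the edge $\{|x'|=r_0,\ x'_m=0\}$, are sound.
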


We now proceed with the proof of \Cref{thm:IOF}.

\begin{lemma}\label{lemma:visc_sol'}
	Let $u$ be as in \Cref{lemma:unif_conv}. Then $u\in C^\infty(\Int_p E'_+;\RR^{d-m})\cap C(E'_+;\RR^{d-m})$ and
	\begin{equation}\notag
	\de_tu-\Delta u = 0
	\end{equation}
	in $\Int_pE'_+$. 
\end{lemma}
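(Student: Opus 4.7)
By Lemma~\ref{lemma:unif_conv} we already have $u \in C(E'_+;\RR^{d-m})$ with a H\"older modulus, so only interior smoothness and the PDE remain. Both will follow once I show that for every unit vector $e \in S^\perp$ the scalar $u \cdot e$ is a viscosity solution of $\de_t w - \Delta w = 0$ in $\Int_p E'_+$: classical parabolic Schauder theory combined with Lemma~\ref{lemma:heat_prop1} will then identify $u$ with the smooth solution of the heat equation taking the continuous boundary values $u|_{\de_p E'_+}$. Replacing $e$ by $-e$ reduces the task to the subsolution property.

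I argue by contradiction. Assume a smooth $\phi : \Int_p E'_+ \to \RR$ touches $u \cdot e$ strictly from above at $(x_0', t_0)$, but $\de_t \phi(x_0', t_0) - \Delta \phi(x_0', t_0) \ge 4\delta > 0$; a paraboloid perturbation propagates the strict inequality to a parabolic neighborhood. Using the Hausdorff convergence $\tSigma^j \to \graph u$ and the single-valuedness of $u$ (Lemma~\ref{lemma:unif_conv}), I would construct, for $j$ large, test functions on $\RR^{d,1}$ of the form
\begin{equation*}
H^j(x,t) = \tfrac{1}{4}\bigl|Px - Px^\ast_j\bigr|^2 + (x - x^\ast_j)\cdot e - \eps_j\bigl(\phi(Sx,t) - \phi(x_0', t_0)\bigr),
\end{equation*}
where $P$ is the orthogonal projection of $\RR^d$ onto $S^\perp \cap e^\perp$ and $x^\ast_j \in \Sigma^j$ is chosen so that $(Sx^\ast_j, t^\ast_j) \to (x_0', t_0)$ and $S^\perp x^\ast_j / \eps_j \to u(x_0', t_0)$. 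The inequality $\phi \ge u \cdot e$ together with the flatness bound $|S^\perp x| \le \eps_j$ on $\Sigma^j$ ensures $H^j(x,t) \le o(\eps_j)$ on $\Sigma^j$ near $x^\ast_j$, with $H^j(x^\ast_j, t^\ast_j) = 0$; after a small strict perturbation this guarantees $H^j|_{\Sigma^j \cap\{t\le t^\ast_j\}}$ attains a local maximum at some interior point $(\hat x_j, \hat t_j)$ with $(S\hat x_j, \hat t_j) \to (x_0', t_0)$ and $\hat x_j \notin \Gamma^j$.

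Applying Proposition~\ref{Maxprinciple} to $H^j$ at $(\hat x_j, \hat t_j)$ yields $\de_t H^j - \inf_{T \perp \nabla H^j} T : D^2 H^j \ge 0$. A direct calculation gives $\nabla H^j(\hat x_j, \hat t_j) = e + O(\eps_j)$, forcing any admissible $m$-plane to satisfy $|T - T_0| = O(\eps_j)$ for some $T_0 \subset S \oplus \Span\{e\}$ with $T_0 \perp e$; the only such choice is $T_0 = S$. With this, the $\tfrac{1}{4}|P\cdot|^2$ term contributes only $O(\eps_j)$ to $T : D^2 H^j$ (since $S : P = 0$), while the $-\eps_j \phi$ term contributes $-\eps_j \Delta\phi + O(\eps_j^3)$ (the error coming from $|T - S|^2 |D^2(\eps_j \phi)|$). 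Dividing by $\eps_j$ and sending $j \to \infty$ produces $\de_t\phi(x_0', t_0) \le \Delta\phi(x_0', t_0)$, contradicting $\de_t\phi - \Delta\phi \ge 4\delta$.

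The main obstacle is the careful bookkeeping required to verify that the infimum over admissible $m$-planes in Proposition~\ref{Maxprinciple} reduces to $S : D^2 H^j = -\eps_j\Delta\phi + o(\eps_j)$ in the limit, rather than to the cruder Pucci operator of Corollary~\ref{cor:max_princ2}. The key simplification is that both $\nabla(\eps_j\phi)$ and $D^2(\eps_j\phi)$ are of order $\eps_j$, which simultaneously (i) confines $T$ to an $O(\eps_j)$-neighborhood of $S$ via the gradient constraint, and (ii) renders the resulting geometric error terms of order $\eps_j^2$, hence negligible after the normalization by $\eps_j$. A secondary subtlety is ensuring that the penalty $\tfrac{1}{4}|P\cdot|^2$ on the transverse directions does not spoil this computation, which is precisely what the choice $T_0 \subset S \oplus \Span\{e\}$ guarantees.
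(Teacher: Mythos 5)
Your overall strategy (reduce to a sub/supersolution property for each component $u\cdot e$, transfer a touching test function $\phi$ to a function $H^j$ on $\RR^{d,1}$ whose restriction to $\Sigma^j$ has an interior maximum, and invoke \Cref{Maxprinciple}) is the same mechanism the paper uses, though the paper compares $u$ directly with the solution $h$ of the heat equation with boundary data $u|_{\de_pE'_+}$ and penalizes with $\omega|Sx|^2$, writing $H=G_1+G_2$ with $G_2=|f-f(X_1)|^2$. The difference that matters is how each argument pins down the admissible planes $T$ in $\inf_{T\perp\nabla H}T\sprod D^2H$, and this is where your proposal has a genuine gap.

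Your claim (i) — that the gradient constraint $T\perp\nabla H^j$ with $\nabla H^j=e+O(\eps_j)$ "confines $T$ to an $O(\eps_j)$-neighborhood of $S$" — is false. Orthogonality to $e$ only controls the component of $T$ along $e$; the $m$-planes orthogonal to $e$ form a large family, most of which are at distance $O(1)$ from $S$ because they may contain directions from $\mathrm{range}(P)=S^\perp\cap e^\perp$ (take e.g.\ $T=\Span\{\e_1,\dots,\e_{m-1},\e_{m+1}\}$ when $e=\e_d$). This is precisely why \Cref{cor:max_princ2} only yields the Pucci operator $\Mc^+$ and not the Laplacian. Your subsequent step "the $\tfrac14|P\cdot|^2$ term contributes only $O(\eps_j)$ since $S\colon P=0$" presupposes the very closeness $T\approx S$ you have not established, so the reduction of the infimum to $S\sprod D^2H^j=-\eps_j\Delta\phi+o(\eps_j)$ is circular as written. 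The correct mechanism is a dichotomy, and the ingredient for it is already in your $H^j$: since $T\sprod D^2H^j=\tfrac12\,T\sprod P-\eps_j\sum_i\iprod{D^2\phi\,S\eta_i}{S\eta_i}$ and $\de_tH^j=O(\eps_j)$, the max-principle inequality $\de_tH^j-T\sprod D^2H^j\ge0$ itself forces $T\sprod P\le C\eps_j$ (otherwise the left-hand side is $\le C\eps_j-\tfrac12 T\sprod P<0$); combined with the gradient constraint this gives $\sum_i|S^\perp\eta_i|^2\le C\eps_j$, hence $|T-S|\le C\eps_j^{1/2}$, and the error in replacing $T$ by $S$ in the $\eps_jD^2\phi$ term is then $O(|T-S|\cdot\eps_j)=O(\eps_j^{3/2})=o(\eps_j)$, which suffices after dividing by $\eps_j$. (Note also your error estimate "$|T-S|^2|D^2(\eps_j\phi)|=O(\eps_j^3)$" is wrong on both counts — the discrepancy is first order in $|T-S|$, and $|T-S|$ is only $O(\eps_j^{1/2})$ — but this does not break the argument.) The paper's proof avoids the issue differently: the quadratic term $G_2$ has $D^2G_2\gtrsim\eps_j^{-2}$ in the $S^\perp$ directions, so any $T$ with $|T-S|\ge c\omega$ makes $T\sprod D^2H$ enormous, which is a more robust (and explicit) version of the same dichotomy. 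With the dichotomy supplied, your proof closes; without it, the key step is unjustified.
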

\begin{proof}	
	We take as a model the proof of \cite[Lemma 2.4]{savin2017viscosity}. We show that $u$ is equal to the solution  $h:E'_+\to \RR^{d-m}$ to the boundary value problem
	\begin{equation}\notag
	\begin{cases}
	\de_th-\Delta h = 0\qquad&\mbox{in }\Int_p E'_+\\
	h = u&\mbox{on }\de_pE'_+.
	\end{cases}
	\end{equation}
	whose existence is guaranteed by Lemma \ref{lemma:heat_prop1}. If not, there exist $r,\omega$ small and positive so that the function
	\begin{equation}\notag
	E'_+\ni (x',t)\mapsto |u(x',t)-h(x',t)|^2+\omega|x'|^2
	\end{equation}
	achieves its maximum at $(x_0',t_0)\in (E'_+)_{2r}$. Since $\tSigma^j$ converges in the Hausdorff distance to $\graph u$, for some large $j$ we may find $X_1=(x_1,t_1)\in\Sigma^j$ such that $(x'_1,t_1)\in(E'_+)_r$ and the restriction to $\Sigma^j$ of
	\begin{equation}\notag
	H(x,t) := \bigg|\frac{S^\perp x}{\eps_j}-h(Sx,t)\bigg|^2+\omega|Sx|^2
	\end{equation}
	achieves its maximum at $X_1$.
	
	We claim that, if $\eps_j$ is small enough, depending on $r$ and $\omega$, then for every $m$-dimensional subspace $T$, it holds $T:D^2H(X_1)>\de_t H(X_1)$. This would contradict \Cref{Maxprinciple}, thus concluding the proof.
	To prove the claim, we define $f(x,t) = \frac{1}{\eps_j}S^\perp x-h(Sx,t)$ and, with some straightforward computations, we write
	\begin{equation}\notag
	H(x,t) = G_1(x,t) + G_2(x,t),
	\end{equation}
	where 
	\begin{gather*}
	G_1(x,t) = |f(X_1)|^2 + 2f(X_1)\cdot(f(x,t)-f(X_1)) + \omega|Sx|^2,\\
	G_2(x,t) = |f(x,t)-f(X_1)|^2.
	\end{gather*}
	Notice that, by \Cref{lemma:heat_prop1}, there exists $C$ depending on $r$ such that
	\begin{equation}\notag
	|D^2G_1(X_1)|\le C(\omega + |f(X_1)||D^2h(SX_1)|)\le C.
	\end{equation}
	Then, just as in \cite{savin2017viscosity}, it is easy to show that, if $|T-S|\le c\omega$, then
	\begin{equation}\notag
	T:D^2G_1(X_1)>\de_tH(X_1)
	\end{equation}
	and $D^2G_2(X_1)\ge0$, thus in this case $T:D^2H(X_1)>\de_tH(X_1)$.
	On the other hand, if $|T-S|\ge c\omega$, then there exists a unit-vector $\nu\in T$ such that $S^\perp \nu\ge c\omega$. In particular, since $D^2G_2(X_1) = 2\nabla f(X_1)\nabla f(X_1)^\trasp$, it holds
	\begin{equation}\notag
	T:D^2G_2(X_1) \ge - |S\nu|^2|\nabla h(SX_1)|^2 + \frac{1}{\eps_j^2}|S^\perp\nu|^2\ge\frac{c\omega^2}{\eps_j^2}.
	\end{equation}
	We now conclude by remarking that $\de_tH(X_1) = 2f(X_1)\cdot\de_th(SX_1)\le C$ and $T:D^2G_1(X_1)\ge-|D^2G_1(X_1)|\ge-C$, thus
	\begin{equation}\notag
	T:D^2H(X_1)\ge-C+\frac{c\omega^2}{\eps_j^2}> \de_tH(X_1),
	\end{equation}
	provided $\eps_j$ is chosen small enough depending on $\omega$ and $C$ (which, in turns, is a large constant depending on $r$).
\end{proof}

Once proven that $u$ is a solution to the heat equation, it is sufficient to apply the following classical estimate:
\begin{lemma}[Boundary regularity for the heat equation]\label{lem:reg_heat_eq}
	For every $\alpha\in(0,1)$, there exist positive constants $C$ and $\beta$ with the following property. Let $u\in C^2(\Int_p E'_+)\cap C(E'_+)$ be such that
	\begin{equation}\notag
	\de_tu-\Delta u=0\qquad\mbox{in }\Int_p E'_+.
	\end{equation}
	Assume, moreover, that for all $t$, $u(\cdot,t)|_{\{x_m=0\}}=g\in C^{1,\alpha}(B_{r_0}\cap\{x_{m}=0\})$, that $|g(0)|=|Dg(0)|=0$ and that $|u|\le 1$ everywhere. Then there exists a linear operator $L:\RR^m\to \RR^{d-m}$ with $|L|\le C$ such that, for every $\eta\in(0,1/4)$, 
	\begin{equation*}\label{e:reg_heat_eq}
	|u(x',t)-L(x')|\le C\eta^{1+\beta}
	\end{equation*}
	in $(B_\eta^m\cap\{x_{m}\ge0\})\times\Tint{-\eta^2,0}$.
\end{lemma}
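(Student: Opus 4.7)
The plan is to derive the estimate from classical parabolic Schauder theory at the flat lateral boundary $\{x_m=0\}$, followed by a Taylor expansion at $(0,0)$. Since the boundary portion is flat, the operator is the heat equation, and the boundary datum $g$ is time-independent, the result is entirely standard; the content is essentially bookkeeping of the parabolic H\"older exponents.

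First, the classical boundary regularity estimate for the heat equation with Dirichlet data on a flat hyperplane gives
\[
\|u\|_{C^{1,\alpha}(Q^m_{r_0/2}\cap\{x_m\ge 0\})}\le C\bigl(\|u\|_{L^\infty}+\|g\|_{C^{1,\alpha}(B_{r_0}\cap\{x_m=0\})}\bigr),
\]
with $C=C(\alpha,r_0)$. Under the hypotheses of the lemma the right-hand side is bounded: we have $|u|\le 1$, and $\|g\|_{C^{1,\alpha}}$ is universally controlled in the application of the lemma thanks to the compactness argument of \Cref{lemma:unif_conv}. In particular $\nabla u(0,0)$ is defined and bounded by $C$.

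Next, the hypothesis $g(0)=Dg(0)=0$ together with the time-independence of $g$ forces $u(0,0)=g(0)=0$ and $\de_{x_i}u(0,0)=\de_{x_i}g(0)=0$ for $i=1,\dots,m-1$, so the linear operator
\[
L(x'):=\nabla u(0,0)\cdot x'=\bigl(\de_{x_m}u(0,0)\bigr)\,x'_m
\]
satisfies $|L|\le C$. The pointwise estimate then follows from a parabolic Taylor expansion: writing
\[
u(x',t)-L(x')=\bigl(u(x',t)-u(x',0)\bigr)+\bigl(u(x',0)-u(0,0)-\nabla u(0,0)\cdot x'\bigr),
\]
the temporal $(1+\alpha)/2$-H\"older seminorm of $u$ (part of the parabolic $C^{1,\alpha}$ norm recalled in \Cref{sec:hold_reg}) bounds the first term by $C|t|^{(1+\alpha)/2}$, while integrating the spatial H\"older bound on $\nabla u$ along the segment from $0$ to $x'$ at time $t=0$ bounds the second term by $C|x'|^{1+\alpha}$. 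Both are dominated by $C\,\rho\bigl((x',t),(0,0)\bigr)^{1+\alpha}\le C\eta^{1+\alpha}$, so setting $\beta:=\alpha$ completes the proof.

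There is no significant obstacle: the proof reduces to invoking classical boundary Schauder theory and a routine Taylor expansion. The only care required is in quoting the exact form of the boundary Schauder estimate (standard, e.g.\ in Lieberman's or Krylov's treatment of parabolic equations) and in verifying that the dependence of the constant on $\|g\|_{C^{1,\alpha}}$ is harmless, which it is because this norm is universally bounded in the single application of the lemma.
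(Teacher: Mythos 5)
Your argument is correct, but it takes a different route from the paper, whose entire proof of \Cref{lem:reg_heat_eq} is a citation of \cite[Theorem 2.1]{wangII} — a \emph{pointwise} boundary $C^{1,\beta}$ expansion for parabolic equations with punctually $C^{1,\alpha}$ Dirichlet data, which is verbatim the statement of the lemma. You instead derive the pointwise expansion from a \emph{uniform} boundary Schauder estimate plus a parabolic Taylor expansion at $(0,0)$; this is a legitimate alternative and arguably more self-contained, since the Taylor step and the identification $\de_{x_i}u(0,0)=\de_{x_i}g(0)=0$ for $i<m$ (hence $L(x')=(\de_{x_m}u(0,0))x'_m$, consistent with the remark following the lemma) are routine. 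Two points deserve more care than your write-up gives them. First, the estimate $\norm{u}_{C^{1,\alpha}}\le C(\norm{u}_{L^\infty}+\norm{g}_{C^{1,\alpha}})$ up to a flat boundary with only $C^{1,\alpha}$ Dirichlet data is \emph{not} the textbook Schauder estimate (which requires $C^{2,\alpha}$ data); it is the ``intermediate'' Schauder theory (Lieberman for parabolic equations, or the potential-theoretic estimates for the half-space heat kernel, or indeed Wang's viscosity approach), so the citation should be made precisely — for the constant-coefficient heat equation with flat boundary and time-independent data the estimate does hold with no loss of exponent, which is why your choice $\beta=\alpha$ is admissible. Second, the lemma as stated imposes no bound on $\norm{g}_{C^{1,\alpha}}$, so your constant $C$ genuinely depends on it; you correctly observe that in the sole application (via \Cref{lemma:unif_conv}, where the limiting boundary datum satisfies $\norm{g}_{C^{1,\varsigma}}\le 1$) this dependence is harmless, but strictly speaking that hypothesis should be added to the statement or the dependence recorded. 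Neither point is a gap in your reasoning; the proof stands.
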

\begin{proof}
	See \cite[Theorem 2.1]{wangII}.
\end{proof}
\begin{remark}
	From the fact that $g\in C^{1,\alpha}$ and that $Dg(0)=0$, it follows that $L(x')=0$ if $x_{m}=0$.
\end{remark}

\begin{proof}[Conclusion of the proof of \Cref{thm:IOF}]	
	By \Cref{lemma:visc_sol'} and \Cref{lem:reg_heat_eq}, there exists $L:\RR^m\to \RR^{d-m}$ linear such that $L(x')=0$ if $x'_m=0$, $|L|\le C$ and, for every $\eta$ small, it holds
	\begin{equation}\notag
	\tSigma\cap (B^m_{2\eta}\times B_1^{d-m}\times(-4\eta^2,0])\subset\{(x,t):|S^\perp x-L(Sx)|\le C\eta^{1+\beta}\}.
	\end{equation}
	We fix $\eta$ small, to be specified later and we choose $j$ sufficiently large so that the Hausdorff distance between $\tSigma$ and $\tSigma^j$ is smaller than $\eta^{1+\beta}$. We now let $T=\{x\in\RR^d\colon S^\perp x = \eps_j L(Sx)\}$. Then it holds
	\begin{equation}\notag
	\Sigma^j\cap Q_\eta\subset \{|T^\perp x|\le C'\eps_j\eta^{1+\beta}\}.
	\end{equation}
	
	Moreover, by \Cref{prop:barrierSTATEMENT} and the fact that $|\gamma^j_m(x'')|\le \eps_j |x''|^{1+\alpha}$, it holds
	\begin{equation}\notag
	\Sigma^j\cap Q_\eta \subset\{x_m\ge-\eps_j\eta^{1+\alpha}-\eps_j^2\},
	\end{equation}
	provided $\eta\le r_2$. We choose $j$ large enough so that $\eps_j^2\le \eta^{1+\beta}$. Since $\beta$ can be chosen smaller than $\alpha$, we have
	\begin{equation}\notag
	\Sigma^j\cap Q_\eta\subset\{x_m\ge-2\eps_j\eta^{1+\beta}\}\cap\{|T^\perp x|\le C\eps_j\eta^{1+\beta}\}.
	\end{equation}
	Up to choosing $j$ larger, the above inclusion yields
	\begin{equation}\notag
	\Sigma^j\cap Q_\eta\subset\{\dist(\cdot,T^+)\le 2C\eps_j\eta^{1+\beta}\}.
	\end{equation}
	We conclude the proof by choosing $\beta'>\beta$ and $\eta$ so small that $2C\eta^{1+\beta}\le \eta^{1+\beta'}$ and we recover \eqref{eq:IOF_conclusion} (with $\beta'$ instead of $\beta$). This contradicts the assumption made at the beginning of the present subsection, thus concluding the proof.
\end{proof}

\section{Boundary behavior}\label{subsec:bdry} 
We now prove \Cref{prop:barrierSTATEMENT}. The setting is the following. Let $E_0$ and $\alpha$ be given and let $r_3$ be the constant given in \Cref{prop:propagation}. Assume $\Mb$ and $\Gamma$ satisfy the assumptions of \Cref{thm:IOF}.
Then $\mdr(\Mb,Q_{r_3})<\infty$, therefore \Cref{prop:barrierSTATEMENT} follows from the following, more general, statement:
\begin{proposition}[Boundary behavior at scale $R$]\label{prop:barrier}
	There exist $c$ and $\eps_1$ depending only on $\alpha$ with the following property. Let $0<\delta<\eps\le\eps_1$, $\Gamma\in\Flat_\alpha(\delta,B_R)$ and $\Mb\in\BF(Q_R,\Gamma)$ be such that
	\begin{equation}\notag
	\Sigma\cap Q_R\subset\{(x,t)\colon \dist(x,S^+)\le\eps R\}
	\end{equation}
	and
	\begin{equation}\label{eq:MDR_barrier}
	\mdr(\Mb, Q_R)<\infty.
	\end{equation}
	Then
	\begin{equation}\notag
	\Sigma\cap Q_{R/2}\subset\Bigg\{(x,t)\colon x_m\ge \gamma_m(x'')-R\delta^2+c R\frac{|S^\perp(x-\gamma(x''))|^2}{2(\eps R)^2}   \Bigg\}.
	\end{equation}
\end{proposition}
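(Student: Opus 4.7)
I would prove \Cref{prop:barrier} by contradiction, sliding a paraboloidal barrier into first contact with $\Sigma$ and invoking the maximum principle from \Cref{Maxprinciple}. Define, for $a>0$,
\[
 \Phi_a(x):=\gamma_m(x'')-a+\frac{c}{2\eps^2R}\bigl|S^\perp(x-\gamma(x''))\bigr|^2,\qquad v_a:=\Phi_a-x_m,
\]
and verify directly that $v_a\equiv -a$ on $\Gamma$. The goal is $v_{R\delta^2}\le 0$ on $\Sigma\cap Q_{R/2}$. If this fails at some $(x_0,t_0)\in\Sigma\cap Q_{R/2}$, introduce a small space-time penalty $P(x,t):=\mu(|x|^2-t)\ge 0$ with $\mu\ll\delta^2/R$ chosen so that $(v_{R\delta^2}-P)(x_0,t_0)>0$ while the flatness hypothesis $\dist(\Sigma,S^+)\le\eps R$ forces $v_a-P<0$ on $\Sigma\cap\de_pQ_R$ for every $a\ge R\delta^2$. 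Slide $a$ upward to a first value $a_*$ at which $v_{a_*}-P\le 0$ on $\Sigma\cap Q_R$: this yields an interior contact point $(x_*,t_*)$, off $\Gamma$ (since $v_{a_*}-P\le -a_*<0$ on $\Gamma\cap Q_R$), at which $\nabla(v_{a_*}-P)\ne 0$ (its $\e_m$-component being $-1+O(\mu R)$).

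\textbf{Regularity and Pucci computation.} Because $\gamma$ is only $C^{1,\alpha}$, $v_a$ is not $C^2$ and \Cref{Maxprinciple} does not apply directly; I would remedy this by pre-mollifying $\gamma$ at scale $\rho:=R\delta^{1/(1+\alpha)}$. Standard estimates yield $\|\gamma-\gamma_\rho\|_\infty\lesssim\delta R^{-\alpha}\rho^{1+\alpha}\le R\delta^2$ (absorbable into the $R\delta^2$ offset) and $|D^2\gamma_\rho|\lesssim\delta R^{-\alpha}\rho^{\alpha-1}\lesssim R^{-1}\delta^{2\alpha/(1+\alpha)}$, negligible compared with the paraboloid Hessian $c/(\eps^2R)$ once $\delta<\eps\le\eps_1(\alpha)$. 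At the contact point, $\de_tv_{a_*}=O(\mu)$ and $D^2v_{a_*}=\tfrac{c}{\eps^2R}S^\perp+\text{lower order}$. Writing $w:=\nabla v_{a_*}$, its $S$-component is $-\e_m+O(\delta)$ and its $S^\perp$-component is of order $c/\eps$, so the linear-algebra identity
\[
 \min_{T\perp w,\ \dim T=m}T:S^\perp=\frac{|Sw|^2}{|w|^2}
\]
gives a lower bound of order $\min\{1,\eps^2/c^2\}$. Consequently $\inf_{T\perp w}T:D^2v_{a_*}(x_*)\gtrsim\min\{c/(\eps^2R),1/(cR)\}>0$, which contradicts $\de_tv_{a_*}-\inf_TT:D^2v_{a_*}\ge 0$ from \Cref{Maxprinciple}, since $\de_tv_{a_*}=O(\mu)$ is negligible.

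\textbf{Main obstacle.} The genuine difficulty is the parameter balance: $\rho$ must be small enough to absorb the $C^0$ mollification error into $R\delta^2$, but large enough that $|D^2\gamma_\rho|\ll c/(\eps^2R)$, and the penalty $\mu$ must not disturb any of these inequalities or the non-vanishing of $\nabla(v_{a_*}-P)$. The hypothesis $0<\delta<\eps\le\eps_1(\alpha)$ provides exactly the headroom required; the constants are fixed in the order $c=c(\alpha)$ first, then $\eps_1=\eps_1(\alpha)$ sufficiently small. The remaining verifications (continuous dependence of the sliding on $a$, existence of the contact point, non-vanishing of $\nabla v_{a_*}$, and the explicit Pucci computation) are routine once this balance is set.
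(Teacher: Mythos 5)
Your overall strategy (a sliding barrier contradicting \Cref{Maxprinciple} at a first interior contact point) is the same as the paper's, and the final Pucci/linear-algebra step is essentially right. But there is a genuine gap in the localization. Your penalty $P(x,t)=\mu(|x|^2-t)$ with $\mu\ll\delta^2/R$ cannot force $v_a-P<0$ on $\Sigma\cap\de_pQ_R$. The flatness hypothesis only gives $|S^\perp(x-\gamma(x''))|\le\eps R+\delta R$ and $x_m\ge-\eps R$ on all of $\Sigma\cap Q_R$, including its parabolic boundary, so there the paraboloid term can be as large as $\tfrac{c}{2\eps^2R}(2\eps R)^2=2cR$ and hence $v_{R\delta^2}$ can be as large as $\approx 2cR$, while $P\le 2\mu R^2\ll\delta^2R\ll cR$. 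Conversely, any penalty of this centered-at-the-origin form that does dominate on $\de_pQ_R$ needs $\mu\gtrsim c/R$, and then $P(x_0,t_0)$ can be of order $cR$, which swamps the (possibly arbitrarily small) positivity margin at the bad point. The localization must instead be anchored at the bad point $(\bar x,\bar t)\in Q_{R/2}$ and exploit that this point is well inside $Q_R$: the paper uses a term $-(C+1)|y''-\bar x''|^2$ (which vanishes at $\bar x''$ but is $\gtrsim R^2$ on the lateral boundary, where $|y''-\bar x''|\ge R/4$) together with a time profile $f$ with $f(-R^2)=-4cR$ but $f\ge-\omega/2$ for $t\ge-R^2/4$. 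As written, your first contact in $a$ may occur on $\de_pQ_R$ and the argument does not close.

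A second, more quantitative issue: mollifying $\gamma$ at scale $\rho=R\delta^{1/(1+\alpha)}$ makes the $C^0$ error $\approx R\delta^2$, which is fine where $\gamma_m$ appears linearly, but inside the paraboloid the error is amplified by $\tfrac{c}{\eps^2R}|S^\perp(x-\gamma)|\le 2c/\eps$, producing an error $\approx (c/\eps)R\delta^2$ that is not absorbable into the $R\delta^2$ offset in the regime $\eps\ll c$ that your Pucci step requires. This is repairable by shrinking $\rho$ further (one checks the Hessian of $\gamma_\rho$ stays negligible), but the paper's device is cleaner and avoids mollification altogether: center the paraboloid at the \emph{fixed} point $S^\perp\gamma(\bar x'')$, so that it is a genuine quadratic in $S^\perp x$, and replace $\gamma_m(y'')$ by the quadratic minorant $\gamma_m(\bar x'')+\nabla\gamma_m(\bar x'')\cdot(y''-\bar x'')-\delta^2-C|y''-\bar x''|^2$, which lies below $\gamma_m$ by the $C^{1,\alpha}$ bound and Young's inequality and simultaneously supplies the spatial localization mentioned above.
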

\begin{remark}
	The role of \eqref{eq:MDR_barrier} is to guarantee that the maximum principle (\Cref{Maxprinciple}) holds true.
\end{remark}
\begin{proof}
	By a simple rescaling argument, it is sufficient to prove the result in the case $R=1$.
	We fix $c$ small and $\eps_1\le c$, to be specified later. By contradiction, assume there exist $0<\delta\le\eps\le\eps_1$, $\Gamma$ and $\Mb$ as above, and a point $(\bar x, \bar t)\in\Sigma\cap Q_{1/2}$ such that
	\begin{equation}\notag
	0<\omega:=\frac{c}{2\eps^2}|S^\perp(\bar x-\gamma(\bar x''))|^2-\delta^2+\gamma_m(\bar x'')-\bar x_m.
	\end{equation}
	We show that, if this is the case, then we may build a family of surfaces sliding in the direction of $\e_{m}$ that touch $\Sigma$ at some point where the conclusion of \Cref{Maxprinciple} fails.
	
	In order to do so, we first define the functions $g:\RR^{d-m}\to\RR$ and $h:\RR^m\to\RR$ as
	\begin{gather*}
	g(z) = c\frac{|z-S^\perp\gamma(\bar x'')|^2}{2\eps^2}\\
	h(y)=P(y'')-|y''-\bar x''|^2-y_m,
	\end{gather*}
	where
	\begin{equation*}
		P(y'') = \gamma_m(\bar x'')+\nabla\gamma_m(\bar x'')\cdot(y''-\bar x'')-\delta^2-C|y''-\bar x''|^2,
	\end{equation*}
	and $C$ is a constant depending only on $\alpha$ chosen so that
	\begin{equation}\label{eq:sviluppo}
	P(x'')\le\gamma_m(x'')
	\end{equation}
	(to show that such $C$ depending only on $\alpha$ exists, use the fact that $\gamma\in C^{1,\alpha}(B_R)$ and Young's inequality).
	Then, choose a smooth function $f\colon\RR\to\RR$ such that $f(-1)=-4c$, $f|_{t\ge-1/4}\ge-\frac{\omega}{2}$, $f<0$ everywhere and $f'(t)\le 8c$ everywhere.
	We now set
	\begin{equation}\notag
	H(x,t)=g(S^\perp x)+h(Sx)+f(t).
	\end{equation}
	This way, the zero-level set of $H$ is a surface sliding in the $\e_m$-direction.
	Notice that
	\begin{equation}\label{eq:sitocca!}
	H(\bar x,\bar t)
	=\omega+f(\bar t)>0.
	\end{equation}
	We now show that, if $(x,t)\in\Sigma\cap((\Gamma\times\RR)\cup\de_p Q_{1})$, then $H(x,t)\le0$.	
	\begin{enumerate}
		\item \label{case:-1} If $x\in\Sigma_{-1}$, then
		\begin{itemize}
			\item $g(S^\perp x)\le c(\eps+\delta)^2/(2\eps^2)=2c$, since $|S^\perp x|\le\eps$ and $|\gamma(x'')|\le\delta\le\eps$;
			\item by \eqref{eq:sviluppo}, $h(Sx)\le\gamma_m(x'')-x_m\le 2\eps$.
		\end{itemize}
		The two above facts, along with the assumption $f(-1) = -4c$, yield
		\begin{equation}\notag
		H(x,-1)\le 2c + 2\eps -4c\le0
		\end{equation}
		provided $\eps\le c$.
		
		\item If $x\in\de {B_{1}}\cap \Sigma_t$, then
		$|S^\perp x|\le\eps$ and $x_m\ge-\eps$, thus
		\begin{equation}\notag
		|x''|\ge\sqrt{1-\eps^2-x_m^2}\ge\frac{3}{4}-x_m,
		\end{equation}
		provided $\eps$ is small enough. In particular, $|x''-\bar x''|\ge\frac{1}{4}-x_m$. Hence:
		\begin{itemize}
			\item since $\norm{\gamma}_{C^{1,\alpha}(B_1)}\le\delta$, we have
			\begin{equation*}
			h(Sx)\le 2\delta -(C+1)|x''-\bar x''|^2-x_m\le 2\delta - (C+1)\bigg(\frac{1}{4}-x_m\bigg)^2-x_m;
			\end{equation*}
			\item as in \Cref{case:-1}, $g(S^\perp x)\le 2c$;
			\item $f(t)\le0$;
		\end{itemize}
		Therefore
		\begin{align*}
		H(x,t)&\le 2c+2\delta - (C+1)\bigg(\frac{1}{4}-x_m\bigg)^2-x_m\le2c+2\delta-\frac{C}{4(1+C)}\le0
		\end{align*}
		provided $C\ge1$ and $c,\delta$ are small enough.
		
		\item Lastly, for every $x\in\Gamma$ and $t\in(-1,0)$, under the assumptions $\delta\le\eps$ and $c\le1$, it holds 
		\begin{align*}\notag
		g(S^\perp x)&=\frac{c}{2\eps^2}|S^\perp(\gamma(x'')-\gamma(\bar x''))|^2\\
		&\le \frac{c}{2\eps^2}\norm{\nabla \gamma}_\infty^2|x''-\bar x''|^2\\
		&\le |x''-\bar x''|^2.
		\end{align*}
		Since $f\le0$ and $h(Sx)\le\gamma_m(x'')-|x''-\bar x''|^2-x_m$, we have
		\begin{equation}\notag
		H(x,t)\le \gamma_m(x'')-x_m=0.
		\end{equation}
	\end{enumerate}
	
	Points 1-3 above and \eqref{eq:sitocca!} show that there must exist $Y = (y,s)\in Q_1\cap\Sigma$ with $y\notin\Gamma$ such that $H|_{\{t\le s\}}$ has a local maximum at $(y,s)$.
	
	We now show that one can choose $c$ even smaller, if needed, so that the existence of such a point would contradict the maximum principle. Indeed, since $|S^\perp y|\le\eps$, if $c$ is small enough then $|\nabla h(S y)|^2/|\nabla g(S^\perp y)|^2\ge\eps$, thus
	\begin{equation}\notag
	|S^\perp\nabla H(Y)| = |\nabla g(S^\perp y)| \le(1-\eps)|\nabla H(Y)|.
	\end{equation}
	Therefore, if $T$ is a $m$-dimensional subspace of $\RR^d$ such that $T\perp \nabla H(Y)$, then
	\begin{equation}\notag
	T\sprod S^\perp \ge \eps
	\end{equation}
	and
	\begin{equation}\notag
	T\sprod D^2 H(Y)
	=T\sprod \begin{pmatrix}
	D^2h&0\\
	0&D^2g
	\end{pmatrix}
	\ge -|D^2h(Sy)| + \eps|D^2g(S^\perp y)|.
	\end{equation}
	Now, simple computations show that, up to multiplications by constants depending only on $m$ and $\alpha$, $|D^2h(Sy)|\le 1$ and $|D^2g(S^\perp y)|\ge\frac{c}{\eps^2}$.
	Therefore, if $\eps$ is much smaller than $c$, then $T\sprod D^2H(y)\ge\frac{c}{2\eps}$. However, by \Cref{Maxprinciple}, it holds
	\begin{equation}\notag
	\inf_{T\perp\nabla H(Y)}T\sprod D^2H(Y)\le\de_tH(Y)=f'(s)\le 8c,
	\end{equation}
	which is a contradiction.
\end{proof}

\section{Decay of oscillations: proof of \texorpdfstring{\Cref{prop:holder_STATEMENT}}{the decay of oscillations}}\label{subsec:osc_decay}
\newcommand{\backpball}{\mathcal{B}}
\newcommand{\backparab}{\mathcal{P}}
In the present section, we prove \Cref{prop:holder_STATEMENT}. 

We begin by giving the following definition:
\begin{definition}
	Let $u:\RR^{m,1}\to[-\infty,1]$ be an upper-semicontinuous function. Assume that, whenever a smooth function $\phi:\RR^{m,1}\to\RR$ touches $u$ from above at some $(x_0',t_0)\in U\times I$ (according to the terminology set in \Cref{subsec:MAX}) and $|\nabla\phi(x_0',t_0)|,|D^2\phi(x_0',t_0)|$ are smaller than some fixed universal constant $\delta_0$, then
	\begin{equation}\label{eq:model}
		\de_t\phi-\Mc^+(D^2\phi)\le0
	\end{equation}
	at $(x'_0,t_0)$ (see \Cref{subsec:MAX} for the definition of $\Mc^+$). Then $u$ is said to be a {viscosity subsolution to \eqref{eq:model}} in $U\times I$.
\end{definition}
The reader should notice that the classical definition of viscosity solution is slightly different than ours, in that the test function $\phi$ usually has no restrictions on the magnitude of $|\nabla\phi|$ and $|D^2\phi|$ at the touching point.

The proof of \Cref{prop:holder_STATEMENT} is achieved in three steps:
\begin{enumerate}
	\item First of all, one sees that the support of a $\Mb$ behaves, in some sense, like the graph of a viscosity subsolution to \eqref{eq:model}, as in the definition above; this was proved in \Cref{cor:max_princ2}.
	\item By exploiting the results in \cite{wangsmall}, one shows that, if a $\Sigma$ has a point far enough from $S$, then the mass of $\Mb$ near that point cannot be too small.
	\item If $\Sigma$ does not have the decay of oscillations stated in \Cref{prop:holder_STATEMENT}, then by the previous step the mass of $\Mb$ in some parabolic cylinder must be large; this contradicts the small density assumption \eqref{eq:double_dens_0}.
\end{enumerate}



Before proceeding, we introduce some notations that we are going to use in the present subsection.
Given $\theta\in(0,1)$, we define the set
\begin{equation}\notag
\backparab_1^{\theta}=\bigg\{(x',t)\in \RR^{m,1}\colon|x'|^2<\frac{-t}{\theta^2}<1\bigg\}.
\end{equation}
One should compare these sets with those which, in \cite{wangsmall}, are called \enquote{parabolic balls}. Our definition slightly differs from theirs; notice that with our choice $\backparab_1^\theta\subset B^m_1\times(-\theta^2,0)$.

\begin{lemma}[Measure estimate, \cite{wangsmall}]\label{lemma:measure}
	For every $\theta>0$ and $\mu\in(0,1)$, there exist small constants $\eta', r$ with the following property. Let $u\colon \RR^{m,1}\to[-\infty,1]$ be a
	viscosity subsolution to \eqref{eq:model}
	in $B_1^m\times(-\theta^2,0)$ and assume that
	\begin{equation}\notag
	u(Y_0)\ge1-\eta'
	\end{equation}
	for some $Y_0\in B^m_{r}\times(-\theta^2r^2,0)$. Then
	\begin{equation}\label{eq:thisiswang}
	\Lc^{m,1}(\{u\ge1-\mu\}\cap\backparab_1^{\theta})\ge(1-\mu)\Lc^{m,1}(\backparab_1^{\theta}).
	\end{equation}
\end{lemma}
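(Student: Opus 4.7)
The plan is to regard $v := 1-u$ as a non-negative viscosity supersolution, in the restricted small-derivative sense, of a uniformly parabolic Pucci equation, and to deduce \eqref{eq:thisiswang} from a parabolic weak-Harnack / $L^\varepsilon$-type measure estimate, exactly in the framework set up in \cite{wangsmall}.

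First I would translate the hypothesis. Set $v = 1-u$, so $v\ge 0$ on $B_1^m\times(-\theta^2,0)$ and $v(Y_0)\le \eta'$. If $\psi$ is smooth and touches $v$ from below at $X_0$ with $|\nabla\psi(X_0)|,|D^2\psi(X_0)|\le\delta_0$, then $\phi=-\psi+\text{const}$ touches $u$ from above at $X_0$ with the same smallness bounds. Writing the subsolution inequality \eqref{eq:model} for $\phi$ and using $\mathcal{M}^+(-A)=-\mathcal{M}^-(A)$, one obtains
\begin{equation*}
\partial_t\psi(X_0)-\mathcal{M}^-(D^2\psi(X_0))\ge 0.
\end{equation*}
Thus $v$ is a viscosity supersolution of the uniformly parabolic Pucci equation $\partial_t w-\mathcal{M}^-(D^2 w)=0$, provided we only test with $\psi\in C^2$ whose first and second derivatives are bounded by $\delta_0$.

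The core of the argument is a Krylov--Safonov/ABP-type measure estimate adapted to this restricted class of test functions. For a parameter $s\in(0,1]$ and a point $Y\in \overline{\mathcal{P}_1^\theta}$, one constructs a smooth barrier $\Phi_{Y,s}$ which is a classical subsolution of $\partial_t\Phi=\mathcal{M}^+(D^2\Phi)$ on $\mathcal{P}_1^\theta$, vanishes on the parabolic boundary, satisfies $\Phi_{Y,s}(Y)=s$, and whose $C^1$- and $C^2$-norms can be made as small as we wish by taking $s$ small. Sliding $\Phi_{Y,s}$ upward from below and looking at the first contact time with $v$ produces, thanks to the supersolution property above, a contact point at which $v$ is controlled by $s\Phi_{Y,s}$; combining this with a Vitali/Calderón--Zygmund covering of $\mathcal{P}_1^\theta$ (as carried out in \cite[\S\,3]{wangsmall}) one obtains a quantitative decay of the distribution function of $v$, in the form of an $L^\varepsilon$-estimate
\begin{equation*}
\mathcal{L}^{m,1}\bigl(\{v>\mu\}\cap\mathcal{P}_1^\theta\bigr)\le C(\theta,\mu)\,v(Y_0)^\varepsilon,
\end{equation*}
valid for all $Y_0$ in a suitable sub-cylinder $B_r^m\times(-\theta^2 r^2,0)$.

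Once this estimate is available, picking $\eta'$ so small that $C(\theta,\mu)(\eta')^\varepsilon\le\mu\,\mathcal{L}^{m,1}(\mathcal{P}_1^\theta)$ yields \eqref{eq:thisiswang}. The main obstacle is to make sure that every test function used in the ABP/covering step has $|\nabla\psi|,|D^2\psi|\le\delta_0$; this forces the barriers $\Phi_{Y,s}$ to be chosen shallow, which in turn forces $\eta'$ and the inner radius $r$ to be small. Checking that the quantitative parabolic covering still closes with these constraints is the technical heart of the argument, and is precisely the content of \cite{wangsmall}, from which the lemma follows by applying the result there to $v$.
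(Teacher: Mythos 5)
Your proposal is correct and follows essentially the same route as the paper: both reduce the statement to the measure estimate of \cite[Lemma 4.3]{wangsmall}, with the key observation that the restricted test-function class (only $\phi$ with small $|\nabla\phi|$ and $|D^2\phi|$ at the contact point) is compatible with the shallow barriers used there. The one point the paper addresses that you pass over is that $u$ is merely upper-semicontinuous and may take the value $-\infty$, whereas \cite{wangsmall} assumes continuity; the paper fixes this by passing to the sup-convolution $u_\delta$ and letting $\delta\searrow0$, though your formulation via first contact from below with the lower-semicontinuous $v=1-u$ absorbs this issue equally well.
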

\begin{proof}
	This result corresponds, essentially, to \cite[Lemma 4.3]{wangsmall}.
	Apart from some trivial adjustment of constants, there are two caveats:
	\begin{itemize}
		\item The results in \cite{wangsmall} are stated with the classical definition of viscosity solutions, where no bound on the test function at the touching point is required. However, it is easy to see that the results are valid for our definition of viscosity solution, as well.
		
		\item In our setting, we allow $u$ to be merely upper-semicontinuous and, possibly, take infinite values, while in \cite{wangsmall} $u$ is required to be continuous. This minor point can be easily overcome by looking at the sup-convolution of $u$:
		\begin{equation}\notag
			u_\delta(x,t) = \sup\bigg\{u(y,s)-\frac{1}{\delta}(|x-y|^2+(t-s)^2)\bigg\},	
		\end{equation}
		which conserves the property of being a viscosity subsolution to \eqref{eq:model} and for which \eqref{eq:thisiswang} holds true, by \cite[Lemma 4.3]{wangsmall}. Letting $\delta\searrow0$ gives the desired conclusion.
	\end{itemize}
\end{proof}


Before stating the next result, we fix some further notations. For any closed set $\Sigma\subset\RR^{d,1}$ and any $\Omega\subset\RR^{d,1}$, we let
\begin{equation}\notag
\osc (\Sigma, \Omega) = \inf\bigg\{h>0\colon \mbox{there is $y\in\RR^d$ such that }\Sigma\cap\Omega\subset\{x\colon |S^\perp(x-y)|\le h\} \bigg\}.
\end{equation}
We also let
\begin{equation}\notag
C_r = \big\{x\in\RR^d\colon |Sx|<r\big\}.
\end{equation}

\begin{lemma}[Harnack inequality]\label{lemma:harnack_0}
	For every $\delta \in(0,1)$, there exist small constants $\eps_2,\theta,r,\eta$ with the following property.
	Let $\eps\le\eps_2$ and $\Mb\in\BF(C_1\times\Tint{-\theta^2,0})$ be such that:
	\begin{gather}
	\Sigma\subset\{|S^\perp x|\le\eps\},\label{eq:piattezza000}\\
	\int_{C_1}\hk(\cdot,t)\,dM_t\le 2-\delta\qquad\mbox{for all }t\in(-\theta^2,0),\label{e:densityass1}
	\end{gather}
	and
	\begin{equation}\label{eq:hp_unif_bound_harnack1}
	\mdr(\Mb, C_1\times\Tint{-\theta^2,0})<\infty.
	\end{equation}
	Then
	\begin{equation}\label{eq:harn_0_concl}
	\osc(\Sigma,C_r\times\Tint{-\theta^2r^2,0})\le(1-\eta)\eps.
	\end{equation}
\end{lemma}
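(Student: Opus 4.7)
The plan is to argue by contradiction, combining the measure-theoretic estimate from \cite{wangsmall} (recalled in \Cref{lemma:measure}) with the Federer-type mass lower bound \Cref{lemma:federer}. The heuristic is: if the oscillation did not improve, then $\Sigma$ would have to contain two well-separated ``sheets'' inside the slab $\{|S^\perp x|\le\eps\}$ on a large portion of the parabolic ball $\backparab_1^\theta$, which would force the Gaussian density at the origin at some intermediate time $t_*\in(-\theta^2,-\theta^2/2)$ to exceed $2-\delta$, contradicting \eqref{e:densityass1}.

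First, for each unit vector $e\in S^\perp$, I would introduce the signed envelopes
\[
w_e^\pm(x',t) = \pm\sup\ptg{\pm e\cdot x : (x,t)\in\Sigma,\ Sx=x'},
\]
with $\sup\emptyset:=-\infty$, and observe that $u_e^\pm:=w_e^\pm/\eps$ take values in $[-\infty,1]$ by \eqref{eq:piattezza000}. A direct adaptation of \Cref{cor:max_princ2} (the proof goes through verbatim with $|S^\perp x|$ replaced by the signed projection $\pm e\cdot x$) shows that $u_e^\pm$ are upper-semicontinuous viscosity subsolutions of $\de_t\phi-\Mc^+(D^2\phi)\le 0$ in the sense of \Cref{subsec:MAX}; the underlying maximum principle \Cref{Maxprinciple} applies thanks to \eqref{eq:hp_unif_bound_harnack1}.

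Next I would fix constants in the following order: $\theta\in(0,1)$ small (to be determined last), then $\mu\le c_0(\delta,m)\,\theta^m$ with $c_0(\delta,m)$ small, then take $(\eta',r)$ produced by \Cref{lemma:measure} for these $\mu,\theta$, set $\eta:=\eta'/C_*$ for a universal $C_*$ coming from Jung's inequality, and finally pick $\eps_2$ so small that $\eps_2^2\le\delta\theta^2/8$. Suppose for contradiction that $\osc(\Sigma, C_r\times(-\theta^2 r^2,0])>(1-\eta)\eps$. Since the $S^\perp$-projection of $\Sigma\cap(C_r\times(-\theta^2 r^2,0])$ lies in the closed $\eps$-ball of $S^\perp$ but has Chebyshev radius $>(1-\eta)\eps$, a Jung-type argument produces a unit vector $e\in S^\perp$ and two points $X_1,X_2$ in this set with $u_e^+(SX_1,t_1)\ge 1-\eta'$ and $u_e^-(SX_2,t_2)\ge 1-\eta'$ (in codimension $d-m=1$ this is immediate with $C_*=2$; in higher codimension $e$ is chosen along a direction of maximal diameter, possibly after translating the center). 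Applying \Cref{lemma:measure} separately to $u_e^+$ and $u_e^-$ and intersecting, the set
\[
A := \ptg{u_e^+ \ge 1-\mu} \cap \ptg{u_e^- \ge 1-\mu} \cap \backparab_1^\theta
\]
satisfies $\Lc^{m,1}(A)\ge (1-2\mu)\Lc^{m,1}(\backparab_1^\theta)$, and for every $(x',t)\in A$ the slice $\Sigma_t$ contains, by definition and by closedness of $\Sigma$, two distinct points projecting to $x'$, one in $\{e\cdot x\ge(1-\mu)\eps\}$ and one in $\{e\cdot x\le-(1-\mu)\eps\}$.

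Finally, \Cref{lemma:federer} forces $\Sigma_t$ to agree $\haus^m$-a.e.\ with the rectifiable set $\supp M_t$ for a.e.\ $t$, so by the area formula for the 1-Lipschitz projection $\pi_S$ and $|S^\perp x|\le\eps$ I would obtain
\[
\int_{\Sigma_t}\hk(x,t)\,d\haus^m(x) \ge 2 e^{-\eps^2/(4|t|)}\int_{A_t}\hk(y,t)\,dy,\qquad A_t:=\ptg{x':(x',t)\in A}.
\]
Disintegrating \Cref{lemma:federer} in time gives $M_t\ge \haus^m\rest\Sigma_t$ for a.e.\ $t$, so the same bound holds with $\int\hk(\cdot,t)\,dM_t$ on the left. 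Choosing, by Fubini, $t_*\in(-\theta^2,-\theta^2/2)$ with $\Lc^m((\backparab_1^\theta)_{t_*}\setminus A_{t_*})\le C_m\mu$ and a universal $K=K(\delta,m)$ with $\int_{B^m_{K\sqrt{|t_*|}}}\hk(y,t_*)\,dy\ge 1-\delta/16$, the bounds $\mu\le c_0(\delta,m)\theta^m$ and $\|\hk(\cdot,t_*)\|_\infty\le (2\pi\theta^2)^{-m/2}$ yield $\int_{A_{t_*}}\hk(y,t_*)\,dy\ge 1-\delta/8$, while $\eps_2^2\le\delta\theta^2/8$ gives $e^{-\eps^2/(4|t_*|)}\ge 1-\delta/8$. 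Combining,
\[
\int_{C_1}\hk(\cdot,t_*)\,dM_{t_*} \ge 2(1-\delta/8)^2 > 2-\delta,
\]
contradicting \eqref{e:densityass1}. The main obstacle will be the quantitative bookkeeping in this last step: the three small losses (the Gaussian correction $e^{-\eps^2/(4|t_*|)}$ due to slab thickness $\eps$, the tail of $\hk$ outside $B^m_{K\sqrt{|t_*|}}$, and the $\mu$-sized defect of $A_{t_*}$) must each be kept strictly below $\delta/8$, which forces the specific order in which $\theta,\mu,\eta,\eps_2$ are fixed.
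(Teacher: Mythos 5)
Your overall architecture is the same as the paper's: argue by contradiction, apply Wang's measure estimate (\Cref{lemma:measure}) twice to produce two ``sheets'' of $\Sigma$ over most of the parabolic ball, convert this into a mass lower bound via \Cref{lemma:federer} and the coarea/area formula, and contradict \eqref{e:densityass1} through the Gaussian density (you do this at a single well-chosen time $t_*$, the paper with the time-integrated quantity; that difference is harmless). The one substantive deviation is your choice of envelope functions: you use \emph{signed} envelopes $\frac{1}{\eps}\sup\{\pm e\cdot x\}$ in a direction $e$ produced by a Jung-type recentering, whereas the paper uses the two \emph{modulus} envelopes $u_i=\frac{1}{2\eps}\sup\{|z-S^\perp y_i|\}$ centered at the two far-apart points $y_1,y_2$, together with the crossing observation that $u_2\ge1-\mu$ forces the corresponding point of $\Sigma_t$ to lie in the $\eps/2$-tube around $y_1$ (and vice versa), yielding two disjoint tubes $A_1,A_2$ and hence multiplicity two.

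That deviation hides a genuine gap when $d-m\ge2$: \Cref{cor:max_princ2} does \emph{not} adapt ``verbatim'' to a signed projection. Its proof rests on the auxiliary function $H(x,t)=\frac14|S'x|^2+x\cdot\e_d-\phi(Sx,t)$, and the convex term $\frac14|S'x|^2$ is exactly what controls the case of tangent planes strongly tilted into $S^\perp$ (the case $|T-S|\ge c_1$): without it, a plane $T$ with a substantial component in $S^\perp\cap e^\perp$ and $T\perp\nabla H$ gives $T\colon D^2H$ close to $0$, which is \emph{not} $\ge-\Mc^+(D^2\phi)$ when, say, $D^2\phi$ is negative definite. The term $\frac14|S'x|^2$ can be kept below $\phi-x\cdot\e_d$ (so that $H\le0$ on $\Sigma$) only because the envelope is a modulus, via $|S'x|^2\le\phi^2-(x\cdot\e_d)^2\le4(\phi-x\cdot\e_d)$; for a signed envelope the touching inequality $e\cdot x\le\eps\,\phi(Sx,t)$ gives no information on the components of $x$ along $S^\perp\cap e^\perp$, so adding the convex term destroys $H\le0$ on $\Sigma$, while omitting it leaves the tilted-plane case of \Cref{Maxprinciple} uncontrolled. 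In codimension one this issue vanishes ($T\perp\nabla H$ then forces $|T-S|\le C\eps\delta$) and your argument is correct and, if anything, cleaner; in general codimension you should replace $u_e^{\pm}$ by the paper's shifted moduli $u_1,u_2$, which also removes the need for Jung's inequality and the recentering. Your remaining quantitative bookkeeping (the order $\theta,\mu,(\eta',r),\eta,\eps_2$ and the three $\delta/8$-losses) is sound and matches the paper's.
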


The proof of the above result involves some technical estimates.
It is therefore convenient to give an overview of the strategy. If \eqref{eq:harn_0_concl} does not hold, then one finds two points $Y_1$ and $Y_2$ in $\Sigma$ that are far enough in $S^\perp$. By applying \Cref{lemma:measure} twice, we find that in $C_1\times(-\theta^2,0)$ the mass of $\Mb$ must be almost that of two $m$-dimensional disks. This contradicts \eqref{e:densityass1}, which encodes the fact that the mass of $\Mb$ must not exceed by too much that of a single disk.

\begin{proof}[Proof of \Cref{lemma:harnack_0}]	
	Let $\delta\in(0,1)$ be given. Fix $\theta$ and $\mu$, which we will specify later, and let $r$ and $\eta'$ be chosen accordingly as in \Cref{lemma:measure}. Moreover, fix $\eps$ much smaller than $\mu$ and $\eta\le\eta'$, to be specified later. Assume, by contradiction, that there exist $\Mb\in\BF(C_1\times\Tint{-\theta^2,0})$ that satisfies the assumptions of the present result with the choices made above, and two points $Y_1=(y_1,s_1),Y_2=(y_2,s_2)\in \Sigma\cap(C_r\times\Tint{-\theta^2r^2,0})$ with $|S^\perp y_1-S^\perp y_2|\ge 2(1-\eta)\eps$.
	For every $(x',t)\in B_1^m\times\Tint{-\theta^2,0}$ and for $i=1,2$, let
	\begin{equation}\notag
	u_i(x',t) = \frac{1}{2\eps}\sup\Big\{|z-S^\perp y_i|\colon z\in S^\perp \mbox{ and }(x',z)\in\Sigma_t\Big\}.
	\end{equation}
	Notice that $u_1$ and $u_2$ are upper-semicontinuous and, for every $(x',t)$, either $u_1(x',t),u_2(x',t)\in[0,1]$ or $u_1=u_2=-\infty$.
	By \Cref{cor:max_princ2} and \eqref{eq:hp_unif_bound_harnack1}, both $u_1$ and $u_2$ are viscosity subsolutions to \eqref{eq:model}.
	Moreover,
	\begin{equation}\notag
	u_1(Sy_2,s_2)\ge\frac{1}{2\eps}	\big|S^\perp y_2-S^\perp y_1\big|\ge 1-\eta\ge1-\eta',
	\end{equation}
	hence, by \Cref{lemma:measure},
	\begin{equation*}\label{eq:stima_misura1}
	\Lc^{m,1}(\{u_1\ge 1-\mu\}\cap\backparab_1^\theta)\ge(1-\mu)\Lc^{m,1}(\backparab_1^\theta).
	\end{equation*}
	With the same argument, one also obtains
	\begin{equation}\label{eq:stima_misura2}
		\Lc^{m,1}(\{u_2\ge 1-\mu\}\cap\backparab_1^\theta)\ge(1-\mu)\Lc^{m,1}(\backparab_1^\theta).
	\end{equation}
	
	We now want to estimate
	\begin{equation*}
		\int_{C_1\times(-\theta^2,0)}\hk\,dM.
	\end{equation*}
	We first define, for $i=1,2$, the sets
	\begin{equation}\notag
	A_i = \{(x,t)\in\RR^{d,1}\colon (Sx,t)\in\backparab_1^{\theta}, |S^\perp (x-y_i)|\le\eps/2 \mbox{ and }t\le-2\eps^2/\delta\}.
	\end{equation}
	Notice that $A_1\cap A_2=\emptyset$ and, by \eqref{eq:piattezza000}, for $M$-a.e. $(x,t)\in A_i$, it holds
	\begin{align*}
	\hk(x,t)&=\exp\bigg(\frac{|S^\perp x|^2}{4t}\bigg)\hk'(Sx,t)\\
	&\ge\exp\bigg(-\frac{\eps^2}{8\eps^2/\delta}\bigg)\hk'(Sx,t) \\
	&\ge\bigg(1-\frac{\delta}{8}\bigg)\hk'(Sx,t),
	\end{align*}
	where $\hk'(x',t)\colon=\hk((x',0),t)$.
	
	Therefore we have
	\begin{align}
		&\int_{C_1\times(-\theta^2,0)}\hk\,dM\notag\\
		&\qquad\ge\int_{A_1}\hk\,dM + \int_{A_2}\hk\,dM\notag\\
		&\qquad\ge\bigg(1-\frac{\delta}{8}\bigg)
		\Bigg(\int_{A_1}\hk'(Sx,t)\,dM(x,t)+
		\int_{A_2}\hk'(Sx,t)\,dM(x,t)\Bigg).\label{eq:inizio_stima}
	\end{align}
	
	Moreover, by \Cref{lemma:federer} and by the coarea formula,
	\begin{align}
	\int_{A_i}\hk'(Sx,t)\,dM(x,t)
	\ge\int_{S(A_i\cap\Sigma)}\hk'(x',t)\,d\Lc^{m,1}(x',t).\label{eq:last_side}
	\end{align}
	
	We may assume that $\mu$ and $\eta$ are smaller that some universal constant so that,
	if $z\in\RR^{d-m}$ with $|z|\le\eps$ is such that $\frac{1}{2\eps}|z-S^\perp y_2|\ge1-\mu$, then
	\begin{equation*}
		|z-S^\perp y_1|\le \frac{\eps}{2}.
	\end{equation*}
	In particular, we have
	\begin{gather*}
	S(A_1\cap\Sigma)\supset\{u_2\ge1-\mu\}\cap\backparab_1^\theta\cap\{t\le-2\eps^2/\delta\}
	\end{gather*}
	which, together with \eqref{eq:stima_misura2}, yields that $S(A_1\cap\Sigma)$ covers a large portion of $\backparab_1^\theta$: namely
	\begin{align*}\notag
	\Lc^{m,1}\bigg(S(A_1\cap\Sigma)\bigg)
	&\ge \Lc^{m,1}\bigg(\backparab_1^\theta\cap\{u_2\ge1-\mu\}\cap\{t\le-2\eps^2/\delta\}\bigg)\\
	&\ge \Lc^{m,1}\bigg(\backparab_1^\theta\cap\{u_2\ge1-\mu\}\bigg) - \frac{2\eps^2}{\delta}\\
	&\ge (1-2\mu)\Lc^{m,1}\big(\backparab_1^\theta\big),
	\end{align*}
	provided $\eps^2\le c\delta\mu$ for some $c$ small universal.
	
	We are now ready to choose $\mu$, depending on $\delta$, so that the above inequality and the fact that $\hk\in L^1(\Lc^{m,1}\rest\backparab_1^\theta)$ yield
	\begin{align}
	&\int_{S(A_1\cap\Sigma)}\hk'\,d\Lc^{m,1}\notag\\
	&\qquad\ge\int_{\backparab_1^\theta}\hk'\,d\Lc^{m,1}-\frac{\delta\theta^2}{8}\notag\\
	&\qquad = \theta^2\int_{B_1^m}\hk'(\cdot,-\theta^2)\,d\Lc^m-\frac{\delta\theta^2}{8}.\label{eq:eps,mu_small}
	\end{align}
	Finally, we also choose $\theta$ small such that
	\begin{equation}\label{eq:theta_small}
	\int_{B_1^m}\hk'(\cdot,-\theta^2)\,d\Lc^m\ge\int_{\RR^m}\hk'(\cdot,-\theta^2)\,d\Lc^m-\frac{\delta}{8} = 1-\frac{\delta}{8}.
	\end{equation}
	By \eqref{eq:eps,mu_small} and \eqref{eq:theta_small}, it holds
	\begin{equation}\label{eq:almost_there}
		\int_{S(A_1\cap\Sigma)}\hk'\,d\Lc^{m,1}\ge\theta^2\bigg(1-\frac{\delta}{4}\bigg).
	\end{equation}
	The same argument can be repeated for $A_2$, thus giving
	\begin{equation}\label{eq:almost_there2}
		\int_{S(A_2\cap\Sigma)}\hk'\,d\Lc^{m,1}\ge\theta^2\bigg(1-\frac{\delta}{4}\bigg).
	\end{equation}

	We conclude the proof by combining \eqref{eq:inizio_stima}, \eqref{eq:last_side}, and \eqref{eq:almost_there}, \eqref{eq:almost_there2}, obtaining
	\begin{align*}
	&\int_{C_1\times(-\theta^2,0)}\hk\,dM\\
	&\qquad\ge 2\theta^2\bigg(1-\frac{\delta}{8}\bigg)\bigg(1-\frac{\delta}{4}\bigg)\\
	&\qquad\ge \theta^2\bigg(2-\frac{3}{4}\delta\bigg)
	\end{align*}
	which contradicts \eqref{e:densityass1}. This concludes the proof.
\end{proof}

%
%

A simple rescaling argument allows one to iterate Lemma \ref{lemma:harnack_0} and obtain the following

\begin{proposition}\label{prop:holder_better}
	For every $\delta\in(0,1)$ there exist $C$ (large) and $\varsigma$ (small) with the following property. Let $\Mb\in\BF(C_R\times(-R^2,R^2))$ be such that
	\begin{equation}\label{eq:holder_better_1}
	\mdr(\Mb, C_R\times(-R^2,R^2))<\infty
	\end{equation}
	and assume that, for every $(x,t)\in C_{R/2}\times(-R^2/4,R^2/4))$ and every $s\in(t-R^2/4,t)$, it holds
	\begin{equation}\label{eq:holder_better_2}
	\int_{C_{R/2}(x)}\hk(\cdot-x,s-t)\,dM_s\le 2-\delta.
	\end{equation}
	If $\eps=\osc(\Sigma,C_R\times(-R^2,R^2))$, then for any couple $(x,t),(y,s)\in C_{R/2}\times(-R^2/4,R^2/4)\cap\Sigma$ such that $\rho=\rho(X',Y')\ge CR^{1-\varsigma}\eps^\varsigma$, it holds
	\begin{equation*}\label{eq:osc_da ottenere}
	|S^\perp(x-y)|\le C \eps\bigg(\frac{\rho}{R}\bigg)^\varsigma.
	\end{equation*}
\end{proposition}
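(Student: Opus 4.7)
The plan is to prove the proposition by iterating Lemma \ref{lemma:harnack_0} on a geometric sequence of parabolic scales around a reference point, yielding a Hölder-type decay of the oscillation. First, a parabolic rescaling by $1/R$ reduces the problem to $R=1$: the density assumption \eqref{eq:holder_better_2} is scale-invariant thanks to the homogeneity $\hk(y/R,t) = R^m \hk(y, R^2 t)$ of the backward heat kernel together with the push-forward formula for Brakke flows, while the threshold becomes $\rho \geq C\eps^\varsigma$ and the conclusion becomes $|S^\perp(x-y)| \leq C\eps\rho^\varsigma$.

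Fix the constants $\theta, r, \eta, \eps_2$ supplied by Lemma \ref{lemma:harnack_0} (applied, say, with parameter $\delta/2$) and set $\varsigma = \log(1-\eta)/\log r \in (0,1)$, so that $(1-\eta) = r^\varsigma$. For any reference point $X_0 = (x_0, t_0) \in \Sigma \cap (C_{1/2} \times (-1/4, 1/4))$, I would iterate Lemma \ref{lemma:harnack_0} on shrinking parabolic cylinders centered at $X_0$: at step $k$, after rescaling to unit size and translating in $S^\perp$ so that the current flat slab is centered at the origin (the cylinders $C_{\cdot}(x_0)$ are invariant under such translations since $Sy=0$ for $y \in S^\perp$), the Lemma gives
\[
\osc\bigl(\Sigma,\, C_{r^k}(x_0) \times (t_0 - \theta^2 r^{2k}, t_0)\bigr) \leq (1-\eta)^k \eps = \eps\, r^{k\varsigma}.
\]
The step is admissible provided the rescaled flatness $((1-\eta)/r)^k \eps$ stays below $\eps_2$; since $(1-\eta)/r > 1$, this holds up to a maximum step count $k_{\max}$ with $r^{k_{\max}} = (\eps/\eps_2)^{1/(1-\varsigma)}$, the smallest attainable scale.

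To deduce the Hölder bound, given two points $(x,t), (y,s) \in \Sigma$ at $S$-parabolic distance $\rho$ (WLOG $s \leq t$), I take $X_0 = (x,t)$ and pick the largest integer $k$ with $\theta r^k \geq 2\rho$, so that $(y,s)$ lies in the cylinder at step $k$: indeed $|S(x-y)| \leq \rho \leq r^k$ and $t-s \leq \rho^2 \leq \theta^2 r^{2k}/4$. Then $|S^\perp(x-y)| \leq 2 \eps r^{k\varsigma} \lesssim \eps \rho^\varsigma$. The step $k$ is admissible iff $r^k \geq (\eps/\eps_2)^{1/(1-\varsigma)}$, and the elementary inequality $\eps^{1/(1-\varsigma)} \leq \eps_2^{1/(1-\varsigma)} \eps^\varsigma$ (valid for $\eps \leq 1$, a harmless assumption since otherwise the threshold $C\eps^\varsigma$ exceeds $1/2$ and the statement is vacuous for points in $C_{1/2}$) shows that the hypothesis $\rho \geq C\eps^\varsigma$ suffices for $C$ large enough in terms of $\varsigma, \theta, \eps_2$.

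The main bookkeeping obstacle is the $S^\perp$-translation performed at each step to re-center the slab: the optimal slab center drifts by $O(\eps \, r^{k\varsigma})$ between consecutive steps in the original coordinates, so the total accumulated shift is dominated by the geometric series $\sum_k \eps \, r^{k\varsigma} \lesssim \eps/\eta$, which stays much smaller than the current scale $r^k$ throughout the iteration. These translations do perturb the density assumption \eqref{eq:holder_better_2}, since $\hk$ is not $S^\perp$-translation invariant; however, the perturbation of the density integral is $O(\eps/\theta^{m+1})$ by the Lipschitz regularity of $\hk$ on the relevant time interval, and can therefore be absorbed by running Lemma \ref{lemma:harnack_0} with $\delta/2$ in place of $\delta$ and taking $\eps_2$ small enough compared to $\theta^{m+1}\delta$.
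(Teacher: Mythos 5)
Your proposal is correct and follows essentially the same argument as the paper: iterate Lemma \ref{lemma:harnack_0} on dyadic parabolic cylinders centered at one of the two points down to the admissible scale $r^k \gtrsim (\eps/\eps_2)^{1/(1-\varsigma)}$, then convert the geometric decay of the oscillation into the Hölder bound. Your extra care about recentering the slab in $S^\perp$ is sound but not strictly needed, since hypothesis \eqref{eq:holder_better_2} is already stated for kernels centered at arbitrary points of the slab $C_{R/2}$ (which is unbounded in the $S^\perp$ directions), so the density bound transfers directly to each rescaled, translated flow.
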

\begin{proof}
	We prove the result for $R=1$, as the general case follows by replacing $\Mb$ with $\Dc_{R}\Mb$.
	
	Let $\eps_2, \theta, r, \eta$ be the constants given in \Cref{lemma:harnack_0} in correspondence to $\delta$.
	Without loss of generality, we may assume that $\eps\le\eps_2$, otherwise the result follows by choosing $C$ large enough.
	Consider the rescaled flows $\Mb^k = \Dc_{r^k}(\Mb-X)$. By induction, the assumptions of \Cref{lemma:harnack_0} are in place for every integer $k$ such that
	\begin{equation}\label{eq:interi}
	\bigg(\frac{1-\eta}{r}\bigg)^k\eps\le\eps_2.
	\end{equation}
	Therefore, scaling back to the original flow, we see that for those $k$:
	\begin{equation}\notag
	\osc\big(\Sigma_{\Mb},C_{r^k}(x)\times\Tint{t-\theta^2r^{2k},t}\big)\le (1-\eta)^k\eps.
	\end{equation}
	
	Let now $X=(x,t)$ and $Y=(y,s)$ be two points in $C_{1/2}\times(-1/4,1/4)\cap\Sigma$ and let $\rho= \rho((x',t),(y',s))$.
	Without loss of generality, we may assume that $t\ge s$. 
	Furthermore, by taking $C\ge2/\theta$, we may clearly reduce ourselves to the case $\rho\le\theta/2$.
	By choosing $\varsigma$ small enough and $C$ larger than the choice made above, if necessary, we infer from $\rho\ge C\eps^\varsigma$ that there exists $k\in\NN$ satisfying \eqref{eq:interi} such that $r^{k+1}\le2\rho/\theta\le r^k$. Thus
	\begin{equation}\notag
	Y\in C_{2\rho}(x)\times\Tint{t-4\rho^2,t}\subset C_{r^k}(x)\times\Tint{t-\theta^2r^{2k},t}
	\end{equation}
	hence it must be $|S^\perp(x-y)|\le 2(1-\eta)^k\eps$. We conclude the proof by taking $C$ larger and $\varsigma$ smaller, if needed, so that $2(1-\eta)^k\le C\rho^\varsigma$.
\end{proof}

We finally prove \Cref{prop:holder_STATEMENT}.
\begin{proof}[Proof of \Cref{prop:holder_STATEMENT}]
	Let $r_2 = \frac{1}{2}\min\{r_1,r_3\}$, where $r_1$ and $r_3$ are given in Propositions \ref{prop:barrierSTATEMENT} and \ref{prop:propagation}, respectively. Let also $X=(x,t), Y=(y,s)$ be two points in $\Sigma\cap Q_{r_2}$. Without loss of generality, we assume that $R:=x_{m}\ge y_{m}\ge 2\eps$. Let $\rho=\rho((x',t),(y',s))$; finally, let $\varsigma$ be the constant determined in \Cref{prop:holder_better} corresponding to $\delta=\frac{1}{2}$. We shall distinguish two cases.
	
	If $\rho\le\frac{R}{8}$, then we may find $t'\in\Tint{-r_2^2,0}$ such that $X,Y\in C_{R/4}(x)\times(t'-R^2/16,t'+R^2/16)$ and $C_{R/4}(x)\subset\Gamma^\compl$. Since $R\le1$, the assumption $\rho\ge C\eps^\varsigma$ yields $\rho\ge CR^{1-\varsigma}\eps^\varsigma$. By \Cref{prop:propagation} and \Cref{cor:MDR}, \eqref{eq:holder_better_1} and \eqref{eq:holder_better_2} hold true, thus \Cref{prop:holder_better} applies and we obtain
	\begin{equation}\notag
	|S^\perp(x-y)|\le C\bigg(\frac{\rho}{R}\bigg)^\varsigma\osc(\Sigma,\mathcal{U}(X)),
	\end{equation}
	where $\mathcal{U}(X) \colon= C_{R/4}(x)\times(t'-R^2/16,t'+R^2/16)$. By \Cref{prop:barrierSTATEMENT}, we may estimate
	\begin{align*}
	\osc(\Sigma,\mathcal{U}(X))\le 2 C\eps\sqrt{2R+\eps+\eps^2}+CR\norm{\nabla\gamma}_{\infty}\le C\eps R^{1/2}, 
	\end{align*}
	since $\eps\le R/2$ and $\norm{\nabla\gamma}_\infty\le C\eps$. Thus
	\begin{equation}\notag
	|S^\perp(x-y)|\le C\eps R^{1/2-\varsigma}\rho^\varsigma\le C\eps\rho^\varsigma,
	\end{equation}
	since $\varsigma$ can be chosen smaller than $1/2$.
	
	On the other hand, if $\rho\ge\frac{R}{8}$, then it is sufficient to use \Cref{prop:barrierSTATEMENT} twice and the fact that $\norm{\nabla\gamma}_\infty\le C\eps$ to estimate:
	\begin{gather*}
	|S^\perp(x-\gamma(x''))|\le C\eps (R+\eps+\eps^2)^{1/2}\le C\eps\rho^{1/2},\\
	|S^\perp(y-\gamma(y''))|\le C\eps (2R+\eps+\eps^2)^{1/2}\le C\eps\rho^{1/2},\\
	|S^\perp(\gamma(y'')-\gamma(x''))|\le C\eps \rho.
	\end{gather*}
	We therefore conclude
	\begin{equation}\notag
	|S^\perp(x-y)|\le C\eps\rho^{1/2}\le C\eps\rho^\varsigma,
	\end{equation}
	which is the desired result. 
\end{proof}

\section{\texorpdfstring{$C^{1,\beta}$}{H\"{o}lder} regularity}\label{sec:final}

In the present section, we prove the following $\eps$-regularity theorem:

\begin{theorem}[$C^{1,\beta}$ regularity]\label{thm:final}
	For every $E_0$ and $\alpha$, there are small constants $\eps_3,\Lambda,\eta$ and $\beta$ with the following property. Let $\eps\le\eps_3$, $\Gamma\in\Flat_\alpha(\eps,B_1)$, $\Mb\in\BF(B_1\times[-\Lambda,0],\Gamma)$ be such that $(0,0)\in\Sigma_{\Mb}$,
	\begin{gather*}
	\Sigma_\Mb\subset\{(z,\tau)\colon \dist(z,S^+)\le\eps\},\\
	\sup_{t\in[-\Lambda,0]}M_t(B_1)\le E_0
	\end{gather*}
	and
	\begin{equation*}
	\int_{B_1}\hk(\cdot,-\Lambda)\,dM_{-\Lambda}\le\frac{3}{4}.
	\end{equation*}	
	Then there is $u\in C^{1,\beta}(Q^m_\eta,\RR^{d-m})$ with $||u||_{C^{1,\beta}}\le C\eps$ such that $\Sigma_\Mb\cap Q^d_\eta=\graph u$ and, for all $t\in(-\eta^2/4,0]$ it holds $\de\Sigma_t\cap B_\eta \subset \Gamma$.
\end{theorem}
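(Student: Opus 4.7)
The plan is to iterate the improvement of flatness (\Cref{thm:IOF}) at the origin and at every nearby point of the space-time track, obtaining a Cauchy sequence of approximating half-planes whose rate of convergence encodes the $C^{1,\beta}$ modulus. In the first step, I would fix the parameters $\eta,\beta,\Lambda,\eps_0$ coming from \Cref{thm:IOF} (for the given $E_0$ and $\alpha$) and verify that the hypotheses of \Cref{thm:IOF} propagate to the rescaled flow $\Mb^{(1)}=\Dc_\eta\Mb$, with the new half-plane $T_1^+$ produced by \Cref{thm:IOF} playing the role of $S^+$. The flatness assumption holds by the conclusion \eqref{eq:IOF_conclusion}; the condition $0\in\Sigma_{\Mb^{(1)}}$ is preserved under dilation; the mass bound transfers via the bound on the maximal density ratio (\Cref{cor:MDR}); the small density assumption \eqref{eq:double_dens_0} propagates through Huisken's formula \Cref{prop:monoton} and \Cref{lemma:brutto}; finally, the rescaled boundary $\eta^{-1}\Gamma$ still belongs to $\Flat_\alpha(\eps_0,B_1)$ because rescaling reduces the $C^{1,\alpha}$-seminorm by $\eta^{-\alpha}$, and this is the crucial place where the $C^{1,\alpha}$ regularity of $\Gamma$ (not just $C^1$) is used.

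Iterating this at the origin produces a sequence of half-planes $\{T_k^+\}_{k\in\NN}$ of the form $T_k^+=\{x+w\zeta_k:x\in\Gamma_0, w>0\}$ with $|\zeta_k-\zeta_{k-1}|\le C\eps_0\eta^{k\beta}$, so that $\zeta_k$ is Cauchy and converges to some $\zeta_\infty$ satisfying $|\zeta_k-\zeta_\infty|\le C\eps_0\eta^{k\beta}/(1-\eta^\beta)$. Choosing $k$ such that $\eta^{k+1}\le r<\eta^k$, one obtains
\begin{equation*}
\Sigma_\Mb\cap Q_r\subset\ptg{(x,t):\dist(x,T^+_\infty)\le C\eps_0 r^{1+\beta}},
\end{equation*}
which is exactly the pointwise $C^{1,\beta}$ tangent half-plane at $(0,0)$.

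The next step is to perform the same iteration at every point $X_0=(x_0,t_0)\in\Sigma_\Mb$ lying in a small neighborhood of the origin. Here the main technical care is that the base point may lie either on $\Gamma$ or in its complement. In the first case, one translates and rescales, replacing $S^+$ by the most recent approximating half-plane at $X_0$ and $\Gamma$ by $\Gamma-x_0$; the $\Flat_\alpha$ condition is preserved after a rotation because $\Gamma\in C^{1,\alpha}$. In the second case, the point $X_0$ has a neighborhood disjoint from $\Gamma$, and one can appeal to the classical interior $\eps$-regularity theorem for the Brakke flow (or equivalently, run the same Savin-type scheme without boundary contributions, which is simpler). In both cases, the hypotheses of \Cref{thm:IOF} are satisfied for some slightly smaller $\eps$ once we are close enough to $0$, by the flatness already established there. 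This yields a half-plane $T^+(X_0)$ at each point, with the assignment $X_0\mapsto T^+(X_0)$ being H\"older continuous of exponent $\beta$ in the parabolic metric. By standard arguments (see, e.g., \Cref{sec:hold_reg}), this is equivalent to saying that $\Sigma_\Mb\cap Q_\eta$ is a $C^{1,\beta}$ surface (with boundary $\Gamma$) evolving $C^{1,\beta}$-smoothly in the parabolic sense, and writing this surface as a graph produces a $C^{1,\beta}$-regular single sheet flow in the sense of \Cref{def:regular_flow}.

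Finally, to upgrade from a statement about $\Sigma_\Mb$ to a statement about the measure $\Mb$ (that is, to identify the density as $1$), I would invoke the Constancy Theorem (\Cref{thm:constancy}): once $\Sigma_\Mb$ is contained in a $C^{1,\beta}$-regular single sheet flow $N$ with boundary $\Gamma$ and $\Mb$ has bounded maximal density ratio, Theorem~3.7 gives $M=\haus^{m,1}\rest N$, which rewritten slice-wise is $\norm{V_t}\rest B_\eta=\haus^m\rest N_t$. The main obstacle in the whole argument is the careful propagation of the four hypotheses of \Cref{thm:IOF} through the iteration (especially the small-density assumption \eqref{eq:double_dens_0} at non-origin base points, and the $\Flat_\alpha$ structure of the translated boundary), since a naive rescaling would destroy the sub-unit bound on $\int\Psi\,dM$; this is handled exactly via the monotonicity estimate encoded in \Cref{prop:propagation}, which is why $\Lambda$ must be kept small and fixed from the very start of the iteration.
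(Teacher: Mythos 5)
Your overall strategy coincides with the paper's: iterate \Cref{thm:IOF} at every point of $\Sigma_\Mb\cap Q_\eta$ (this is exactly \Cref{prop:iof_iter}), deduce H\"older decay of the tangent (half-)planes, represent $\Sigma_\Mb$ as a $C^{1,\beta}$ graph over $S$, and invoke the constancy theorem to identify the measure. The verification that the hypotheses of \Cref{thm:IOF} propagate under rescaling, and the Cauchy-sequence argument for the approximating planes, are also as in the paper.

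There is, however, a genuine gap in your final step. Knowing that $\Sigma_\Mb\cap Q_\eta$ is contained in the graph of a $C^{1,\beta}$ function $u$ over $S$ does \emph{not} yet put you in a position to apply \Cref{thm:constancy} with boundary $\Gamma$: for that you need $\Sigma_\Mb$ to be contained in a single sheet flow \emph{with boundary $\Gamma$} in the sense of \Cref{def:regular_flow}, i.e.\ you must first know that $\Sigma_\Mb$ lies only over the half $\overline{E_+}=\{x_m\ge\gamma_m(x'')\}$ of the graph. A priori the flow could be the full graph of $u$ on both sides of $\Gamma$: a boundaryless flow that happens to contain $\Gamma$ is not excluded by \Cref{def:vfd_w_b}, since only the inequality $\beta_M\le\haus^{m-1}\rest\Gamma$ is imposed there. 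The paper excludes the component over $E_-=\{x_m<\gamma_m(x'')\}$ by applying the barrier estimate (\Cref{prop:barrier}) at scale $R=\eta$, then rules out the degenerate alternative $(\Sigma\setminus\Gamma)\cap E_+=\emptyset$ via the clearing-out lemma (\Cref{cout_lemma}), and finally checks that $\graph u$ actually attaches to $\Gamma$ (otherwise $\beta_{M_t}$ would not charge $\Gamma$, contradicting $M_t\in\Vc_m(B_\eta,\Gamma)$). None of these three points appears in your proposal, and without the first one the application of the constancy theorem is not justified.
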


Before proving the above result, we record the following consequence of \Cref{thm:IOF}
\begin{proposition}[Iteration of the improvement of flatness]\label{prop:iof_iter}
	Under the assumptions of \Cref{thm:IOF}, for every $X=(x,t)\in\Sigma\cap Q_\eta$:
	\begin{itemize}
		\item If $x\in\Gamma$, then there exists a $m$-dimensional half plane $T^+_X$ such that $\de T^+_X=T_x\Gamma$ and 
		\begin{equation}\notag
		\Sigma\cap Q_{\eta^k}(X)\subset\{\dist(\cdot,x+T^+_{X})\le 2\eps\eta^{k(1+\beta)}\}
		\end{equation}
		for every $k\in\Nb$;
		\item If $x\notin\Gamma$, then there exists a $m$-dimensional plane $T_X$ such that 
		\begin{equation}\notag
		\Sigma\cap Q_{\eta^k}(X)\subset\{\dist(\cdot,x+T_{X})\le 2\eps\eta^{k(1+\beta)}\}
		\end{equation}
		for every $k\in\Nb$.
	\end{itemize}
\end{proposition}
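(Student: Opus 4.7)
The plan is to iterate \Cref{thm:IOF} along a dyadic sequence of parabolic cylinders centered at $X$. At a boundary point $x\in\Gamma$, one argues by induction on $k$, constructing half-planes $T^+_k$ with $\de T^+_k = T_x\Gamma$ such that
\[
\Sigma\cap Q_{\eta^k}(X) \subset \{y : \dist(y,x+T^+_k) \le \eps\eta^{k(1+\beta)}\} \quad\text{and}\quad |T^+_k - T^+_{k-1}|\le C\eps\eta^{(k-1)\beta}.
\]
For the inductive step, I would translate $X\mapsto(0,0)$, rotate coordinates so that $T^+_k\mapsto S^+$ and $T_x\Gamma\mapsto\Gamma_0$, and apply the parabolic rescaling $\Dc_{\eta^k}$. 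The rescaled flow $\tilde\Mb^k$ then lives in $B_1\times[-\Lambda,0]$ with flatness $\eps\eta^{k\beta}$ with respect to $S^+$, and the rescaled boundary $\tilde\Gamma^k$ has $C^{1,\alpha}$-seminorm at most $\eps\eta^{k\alpha}$, so (upon choosing $\beta\le\alpha$ in \Cref{thm:IOF}) $\tilde\Gamma^k\in\Flat_\alpha(\eps\eta^{k\beta},B_1)$. The mass bound and maximal density ratio bound are preserved under parabolic dilation; one then applies \Cref{thm:IOF} to $\tilde\Mb^k$ to obtain a new half-plane $\tilde T^+$ with $|\tilde T^+-S^+|\le C\eps\eta^{k\beta}$, and $T^+_{k+1}$ is obtained by undoing the rescaling and rotation.

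The sequence $\{T^+_k\}$ is Cauchy (with rate $C\eps\eta^{k\beta}$) in the space of half-planes through $x$ with boundary $T_x\Gamma$, so it converges to the desired $T^+_X$; the inclusion in the statement follows by passing to the limit. For an interior point $x\notin\Gamma$, I would pick $k_0$ so that $B_{\eta^{k_0}}(x)\cap\Gamma=\emptyset$ and apply the boundary iteration above for $k\le k_0$; for $k\ge k_0$, the rescaled flow no longer sees $\Gamma$ in $B_1$, so one applies the boundary-free version of \Cref{thm:IOF} (which is exactly the same argument with $\Gamma=\emptyset$ throughout Section \ref{sec:IOF}) to produce full $m$-planes $T_k\to T_X$.

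The main obstacle is verifying the small density hypothesis \eqref{eq:double_dens_0} at the rescaled scale at each step, since \Cref{prop:propagation} by itself only gives the weaker bound $7/8$ at boundary points. The point is that, by the inductive flatness estimate, $\tilde\Mb^k$ is extremely close (in Hausdorff distance) to the half-plane $S^+$, whose Huisken density at a boundary point equals $1/2$. Taking $\eps_0$ smaller if necessary (depending on $\eta,\beta,\alpha$) and combining the monotonicity formula \Cref{prop:monoton} with the quantitative boundary estimate \Cref{lemma:brutto} yields
\[
\int_{B_1}\hk(\cdot,-\Lambda)\,d\tilde M^k_{-\Lambda}\le \tfrac{1}{2}+o_{\eps_0}(1)\le\tfrac{3}{4},
\]
which closes the induction.
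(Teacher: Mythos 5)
Your overall scheme --- recenter at $X$, rotate so that $T^+_k\mapsto S^+$ and $T_x\Gamma\mapsto\Gamma_0$, rescale by $\Dc_{\eta^k}$, apply \Cref{thm:IOF}, and observe that the tilts $|T^+_k-T^+_{k-1}|$ form a geometric series so that the half-planes converge --- is exactly the iteration the paper intends, and your verification of the hypotheses on the boundary ($\tilde\Gamma^k\in\Flat_\alpha(\eps\eta^{k\beta},B_1)$ once $\beta\le\alpha$), on the flatness, and on the mass is fine. You also correctly single out the delicate point: the Gaussian density hypothesis \eqref{eq:double_dens_0} is not literally inherited by the rescaled flows, since \Cref{prop:propagation} only yields $7/8$ at boundary points.

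Your proposed resolution of that point, however, does not work. Hausdorff closeness of the support to a half-plane gives no \emph{upper} bound on the Gaussian density ratio: a flow whose support lies in an $\eps\eta^{k\beta}$-slab around $S^+$ may still carry twice the mass of $S^+$ (higher multiplicity, or a second nearby sheet), in which case the ratio at the relevant scale is close to $1$, not to $1/2$. Huisken's monotonicity formula only transfers upper bounds from larger scales down to smaller ones, so it cannot improve the propagated constant $7/8$ back to $3/4$ at scale $\eta^k$; and \Cref{lemma:brutto} controls the boundary error term in the monotonicity formula, not the density itself. Obtaining the sharp mass upper bound $(1+o(1))\haus^m\rest S^+$ from flatness alone is essentially the sheeting statement one is trying to prove, so the argument would be circular. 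The correct way to close the induction is to note that the proof of \Cref{thm:IOF} uses \eqref{eq:double_dens_0} only through \Cref{prop:propagation} (which feeds \Cref{cor:MDR}, \Cref{prop:barrierSTATEMENT}, \Cref{prop:holder_STATEMENT} and the compactness), and that the conclusion of \Cref{prop:propagation} holds for all centers in $Q_{r_3}$ and all scales $\tau\in(-r_3^2,0)$ simultaneously; it is therefore inherited verbatim by every rescaled flow $\Dc_{\eta^k}(\Mb-X)$. Since what is actually required downstream is only that these propagated ratios stay below $2-\delta$ for the Harnack inequality (multiplicity one of the limit in \Cref{lemma:comp_hyperplane} comes for free from \Cref{thm:constancy} when the boundary is nonempty), the constants $7/8$ and $7/8+1/2=11/8$ leave enough room, and no re-derivation of the $3/4$ bound is needed. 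A secondary point to address: when $x\notin\Gamma$, the cylinders $Q_{\eta^k}(X)$ for small $k$ may still meet $\Gamma$ even though $x\notin\Gamma$, so neither version of \Cref{thm:IOF} applies verbatim at those scales; the standard remedy is to run the boundary iteration centered at the nearest point of $\Gamma$ down to scale comparable to $\dist(x,\Gamma)$ and only then switch to the interior iteration centered at $x$.
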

\begin{proof}
	This result is a straightforward consequence of an iteration of \Cref{thm:IOF}. Namely, given $X\in\Sigma\cap Q_\eta$ with $x\in\Gamma$, we may find a sequence of half-planes $T^+_k$ such that
	\begin{equation*}
		\Sigma\cap Q_{\eta^k}\subset\{\dist(\cdot, T_k^+)\le \eta^{k(1+\beta)}\eps\}.
	\end{equation*}
	Moreover, $|T_k^+-T^+_{k-1}|\le C\eps\frac{\eta^{k(1+\beta)}}{\eta^k}$ for some $C$ depending only on $E_0$ and $\alpha$.
	Therefore, $\{T_k^+\}$ converges to some half plane $T_X^+$ for which the conclusion of the proposition holds true. 
	
	For the case $x\notin\Gamma$, one may see \cite{Tonegawabook} or replicate the techniques of the previous sections.
 \end{proof}

\begin{remark}\label{rk:decay_0}
	Given $x\in\Gamma$ and $T_{(x,t)}^+$ as in \Cref{prop:iof_iter}, throughout the rest of the present section, we let $T_{(x,t)}$ be the $m$-dimensional plane obtained by reflecting $T_{(x,t)}^+$ across $T_x\Gamma$.	
	We remark the following conclusion of \Cref{thm:IOF}: there is $C$ depending only on $E_0$ and $\alpha$ such that, for every $X\in\Sigma\cap Q_\eta$, it holds
	\begin{equation*}
		|T_{X}-S|\le C \eps.
	\end{equation*}
\end{remark}

We are now ready to prove \Cref{thm:final}.
\begin{proof}[Proof of \Cref{thm:final}]
	Up to a rotation, we may assume, without loss of generality, that $T_{(0,0)}$ defined in \Cref{prop:iof_iter} coincides with the plane $S$ that satisfies the assumptions of the present result.
		
%

	\textbf{Step 1: $\Sigma$ is the graph of a $C^{1,\beta}$ function over $S(\Sigma)$. }Let $X\in\Sigma\cap Q_\eta^d$ and, for simplicity of notation, let $T = T_{X}$ as defined in \Cref{rk:decay_0}; recall that $|T-S|\le C\eps$.
	For any other point $Y\in\Sigma\cap Q_\eta^d$, we may write
	\begin{align*}
	|S^\perp(x-y)|&\le |T^\perp(x-y)|+|S^\perp-T^\perp||x-y|\\
	&\le C\eps\rho(X,Y)^{1+\beta} + C\eps\rho(X,Y)\\
	&\le 2C\eps\rho(X,Y).
	\end{align*}
	If $\eps_3$ is smaller than some universal constant, we conclude
	\begin{equation*}
	|S^\perp(x-y)|\le 3 C\eps\rho(SX,SY).
	\end{equation*}
	The above inequality, together with \Cref{prop:iof_iter}, yields
	\begin{equation}\label{eq:almeno_lip}
	|T^\perp(x-y)|\le C\eps\rho(SX,SY)^{1+\beta}.
	\end{equation}
	
	Now, by using the identities $S+S^\perp= T+T^\perp=\Id$, it may be checked by direct computations that
	\begin{equation}\label{eq:direct_computations}
	(\Id-S^\perp T)S^\perp(x-y)-S^\perp T S(x-y)= S^\perp T^\perp (x-y).
	\end{equation}
	Since $|S-T|\le C\eps$, $(\Id-S^\perp T)$ is invertible and $|(\Id-S^\perp T)^{-1}|\le2$ provided $\eps$ is small enough. In particular, by letting $L=(\Id-S^\perp T)^{-1}S^\perp$, we have $|L|\le 2$. Then \eqref{eq:almeno_lip} and \eqref{eq:direct_computations} above give
	\begin{equation}\label{eq:infatti_holder}
	\big|S^\perp x-S^\perp y - L T(Sx-Sy)    \big|= |L T^\perp(x-y)|\le 2|T^\perp(x-y)|\le C\eps\rho(SX,SY)^{1+\beta}.
	\end{equation}
	
	This proves that, indeed, there is $u\in C^{1,\beta}(S(\Sigma\cap Q_\eta);\RR^{d-m})$ with $||u||_{C^{1,\beta}}\le C\eps$ such that $\Sigma\cap Q_\eta = \graph u$.
	
	\textbf{Step 2: Absence of holes. }
	We now \enquote{split} the parabolic cylinder $Q_{\eta/2}$ into two components, on two opposite sides of $\Gamma$. To this end, we define the following sets:
	\begin{equation*}
		E'_+ := \{x'\in B_{\eta/2}^m\colon x'_m>\gamma_m(x'') \},\qquad E'_-:= \{x'\in B_{\eta/2}^m\colon x'_m<\gamma_m(x'') \},
	\end{equation*}
	their parabolic counterparts
	\newcommand{\Ec}{\mathcal{E}}
	\begin{equation*}
		\Ec'_\pm := E'_\pm\times(-\eta^2/4,0]\subset\RR^{m,1}
	\end{equation*}
	and, lastly, 
	\begin{equation*}
		\Ec_{\pm} = \{(x,t)\in\RR^{d,1}\colon (x',t)\in E_{\pm}\}.
	\end{equation*}
	Arguing for the positive side (as the argument applies for the other case) we claim that, if $X_1=(x_1,t_1)\in\Sigma\cap\Ec_+$, then
	\begin{equation}\label{eq:no_holes}
		\Ec'_+\cap\{t\le t_1\}\subset S(\Sigma\cap Q_\eta).
	\end{equation} 
	To prove this, assume by contradiction that there is $(x_0',t_0)\in \Ec'_+\setminus S(\Sigma)$ with $t_0<t_1$. Since $\gamma\in C^{1,\alpha}$, it is easy to see that there exist a smooth curve $p:[t_0,t_1]\to E'_+$ and $\rho>0$ with the following properties:
	\begin{equation*}
	\begin{cases}
		p(t_0)=x'_0\mbox{ and } p(t_1)=x'_1;\\
		Q^m_\rho(p(t),t)\subset \Ec'_+;\\
		Q^m_\rho(p(t_0),t_0)\subset S(\Sigma)^\compl.
	\end{cases}
	\end{equation*}
	The fact that $\Sigma$ is closed and \Cref{cout_lemma} yield the existence of a time $\bar t\in(t_0,t_1)$ such that $Q^m_\rho(p(t),t)\subset S(\Sigma)^\compl$ for every $t< \bar t$ and a point $(y_0,s_0)\in\Sigma$ such that $(y_0',s_0)\in\de B_\rho^m(p(\bar t))\times[\bar t-\rho^2,\bar t]$.
	
	Let us now consider a sequence $r_j\searrow0$ and define the dilations
	\begin{equation}\notag
		\Mb^j = \Dc_{r_j}(\Mb-(y_0,s_0)).
	\end{equation}
	Since $\Mb$ has bounded maximal density ratio, the compactness theorems in \cite[Section 10]{whitebdry} yield that, up to passing to a subsequence, $\Mb^j$ converges to a limit Brakke flow $\Mb^\infty$. 
	
	Then \eqref{eq:infatti_holder}
	implies that there exists a $m$-dimensional half plane $T^+$ such that
	\begin{equation*}
		\Sigma_{\Mb^\infty}\subset T^+\times(-\infty,0].
	\end{equation*}
	Moreover, since $(y_0,s_0)\in\Sigma_\Mb$, we have $(0,0)\in\Sigma_{\Mb^\infty}$.
	
	We finally show that this violates the maximum principle. Up to a change of coordinates, say $T^+=\{x_{m+1}=\dots=x_d=0\mbox{ and }x_m>0\}$ and let
	\begin{equation}\notag
		f(x,t)= \frac{|T^\perp x|^2}{2}-\frac{|x''|^2}{2m}+\frac{|x_m|^2}{2}-x_m+\frac{1}{2m}t,
	\end{equation}
	where $x'' = (x_1,\dots,x_{m-1})$.
	Then $f|_{\Sigma_{\Mb^\infty}\cap \{t\le0\}}$ has a local maximum at $(0,0)$. However, it holds $\de_tf(0,0) = \frac{1}{2m}$ and
	\begin{equation*}
		\inf_{\substack{T\in\Gr(m,d)\\T\perp\nabla f(0,0)}} T\sprod D^2f(0,0) = 1-(m-1)\frac{1}{m} = \frac{1}{m},
	\end{equation*}
	which contradicts \Cref{Maxprinciple}, thus proving \eqref{eq:no_holes}.

	\textbf{Step 3: Conclusion. }Since, by assumption, $(0,0)\in\Sigma_\Mb$, by \Cref{cout_lemma} it must be $M_t(B_r)>0$ for every $t<0$ and $r>0$. However, if there were $t_1<0$ and $r>0$ such that
	\begin{equation*}
		M_{t_1}(B_r\cap\{x_m<\gamma_m(x'')\})>0,
	\end{equation*}
	by the previous step it should be $\Ec'_-\cap\{t\le t_1\}\subset S(\Sigma\cap Q_\eta)$. However, this would contradict \Cref{prop:barrier} with $R=\eta$, provided $\eps_3$ is chosen small enough. Therefore we have that, for every $t<0$ sufficiently close to $0$, it must be
	\begin{equation*}
		M_t(B_r\cap\{x_m>\gamma_m(x'')\})>0
	\end{equation*}
	and, by the previous step,
	$\Ec'_+\subset S(\Sigma\cap Q_\eta)$ which, together with \eqref{eq:infatti_holder} amounts to saying that there exists $u:\Ec'_+\to\RR^{d-m}$ such that
	\begin{equation*}
		\Sigma\cap (\Ec_+\cup\Ec_-) = \graph u = \{(x,t)\colon Sx\in\Ec'_+ \mbox{ and }S^\perp x = u(Sx,t)\}
	\end{equation*}
	and $||u||_{C^{1,\beta}}\le C\eps$. 
	
	We only have to prove that $\de\Sigma_t \cap B_{\eta/2} \subset \Gamma$ for every $t\in (-\eta^2/4,0]$. If there were $x\in \de\Sigma_t \cap B_{\eta/2}\setminus \Gamma$, then by the fact that $u\in C^{1,\beta}$ there would be some blow-up of $\Sigma$ around $(x,t)$ that is contained in a $m$-dimensional half-plane for all times. Arguing as in the previous step, one finds a contradiction to \Cref{Maxprinciple}.
\end{proof}

\appendix
\section{Proof of \texorpdfstring{\Cref{lemma:brutto}}{Lemma \ref{lemma:brutto}}}\label{sec:proofofLemmabrutto}
Up to rescaling and translating, it is sufficient to prove that there exist $A$ and $\Lambda$ small and positive, depending only on $\delta$ and $\alpha$, such that, if $\Gamma$ is a $(m-1)$-dimensional properly embedded submanifold of $B_2$ with $[\Gamma]_{C^{1,\alpha}(B_2)}\le A$, then
\begin{equation}\label{stimaquesto}
\int_{-\Lambda}^0\int|T_y\Gamma^\perp\nabla\hk_{1}(y,t)|\,d\Gamma(y)\,dt\le\frac{1}{2}\chi_{\Gamma^\compl}(0)+\delta.
\end{equation}
For brevity, we denote by $\Gamma_y$ the space $T_y\Gamma$.
Throughout the proof, $C$ will denote constants (possibly changing from one expression to another) depending only on $m,d,\alpha$.

\textbf{Case 1: $0\in\Gamma$. }
We start by remarking that
\begin{align*}
&\int_{-\Lambda}^0\int|\Gamma_y^\perp\nabla\hk_{1}(\cdot,t)|\,d\Gamma\,dt\\
&\qquad\le\int_{-\Lambda}^0\int_{B_2}|\Gamma_y^\perp\nabla\hk(\cdot,t)|\,d\Gamma(y)\,dt+C\Gamma(B_2)\int_{-\Lambda}^0\frac{e^{1/(4t)}}{(-t)^{m/2}}\,dt.
\end{align*}
If $A$ is smaller than some universal constant, then $\Gamma(B_2)\le C$, thus we may take $\Lambda$ small depending on $\delta$ so that the last term in the above inequality is smaller than $\delta/2$.
Therefore we reduce ourselves to proving that, if $A$ is small, then
\begin{align*}
I_1:=\int_{-\Lambda}^0\int_{B_2}|\Gamma_y^\perp\nabla\hk(\cdot,t)|\,d\Gamma\,dt
\le\frac{\delta}{2}.
\end{align*}
Since $[\Gamma]_{C^{1,\alpha}(B_2)}\le A$ is small, for every $(y,t)\in\Gamma\times(-\infty,0)$:
\begin{equation}\notag
|\Gamma_y^\perp\nabla\hk(y,t)|\le C\frac{e^{|y|^2/t}}{(-t)^{1+m/2}}|\Gamma_y^\perp y|\le CA\frac{e^{|y|^2/t}}{(-t)^{1+m/2}}|y|^{1+\alpha}.
\end{equation}
We then use the fact that, if $A$ is smaller than some universal constant, then $\Gamma\cap B_2$ is the graph over $\Gamma_0$ of some function $\gamma:\RR^{m-1}\to\RR^{d-m+1}$ such that $\norm{\gamma}_{C^{1,\alpha}}\le CA$. In particular, by using the area formula and the fact that the $||\nabla\gamma||_{L^\infty(B_2)}\le 1$ for $A$ small enough, we obtain
\begin{align*}
\int_{B_2}|\Gamma_y^\perp\nabla\hk(y,t)|\,d\Gamma(y)\le CA\frac{1}{(-t)^{1+m/2}}\int_{\RR^{m-1}}|y|^{1+\alpha}e^{|y|^2/t}\,d\Lc^{m-1}(y) = CA (-t)^{\frac{\alpha}{2}-1}
\end{align*}
for some $C$ depending only on $m$ and $\alpha$.
Therefore, assuming $\Lambda\le1$,
\begin{equation}\notag
I_1\le CA\int_{-1}^0(-t)^{\frac{\alpha}{2}-1}\,dt\le CA.
\end{equation}
We conclude the proof in the case $0\in\Gamma$ by choosing $A\le\frac{\delta}{2C}$.

\textbf{Case 2: $0\notin\Gamma$. }Let $E_\Gamma$ be the $m$-dimensional Hausdorff measure restricted to the exterior cone
\begin{equation}\notag
C_\Gamma:= \{\lambda y\colon\lambda\ge1\mbox{ and }y\in\Gamma\}
\end{equation}
with multiplicity, as defined in \cite[Section 7]{whitebdry}. With similar computations to those in the proof of \cite[Theorem 7.1]{whitebdry}, we may show that
\begin{align}
&\int_{-\Lambda}^{0}\int|\Gamma_y^\perp\nabla\hk_{1}(y,t)|\,d\Gamma(y)\,dt\notag\\
&\qquad\le -\lim_{\tau\nearrow0}\int\hk_1(\cdot,\tau)\,dE_\Gamma + \int\hk_1(\cdot,-\Lambda)\,dE_\Gamma + C\Lambda E_\Gamma(B_2)\notag\\
&\qquad = \int\hk_1(\cdot,-\Lambda)\,dE_\Gamma + C\Lambda E_\Gamma(B_2),\label{eq:caso_fuori}
\end{align}
Where the last equality comes from the fact that $0\notin\Gamma$.

In order to prove \eqref{stimaquesto}, we argue by contradiction: assume there is a sequence $\{\Gamma^j\}$ with $0\notin\Gamma^j$ such that $\norm{\Gamma^j}_{C^{1,\alpha}(B_2)}\le \frac{1}{j}$ for which the left-hand side of \eqref{eq:caso_fuori} is greater than $\frac{1}{2}+\delta$. One may show that, up to extracting a subsequence, $E_{\Gamma^j}$ converges weakly to $\haus^{m}\rest S^+$, where $S^+$ is some $m$-dimensional half plane such that $0\notin \Int(S^+)$. Therefore
\begin{equation}\notag
\limsup_{j\to\infty}\Bigg\{\int\hk_1(\cdot,-\Lambda)E_{\Gamma^j}+C\Lambda E_{\Gamma^j}(B_2)\Bigg\}
\le \int_{S^+}\hk_1(\cdot,-\Lambda)\,d\haus^m+C\Lambda\haus^{m}(\overline{B_2}\cap S^+).
\end{equation}
Since $0\notin\Int(S^+)$, the integral in the right-hand side of the above inequality is smaller than $\frac{1}{2}$ for every choice of $\Lambda$. On the other hand, $\Lambda$ may be chosen so small that
\begin{equation*}
	C\Lambda\haus^{m}(\overline{B_2}\cap S^+)\le\frac{\delta}{2},
\end{equation*}
which contradicts the assumption made above, thus concluding the proof.

\section{Proof of \texorpdfstring{\Cref{lemma:federer}}{Lemma \ref{lemma:federer}}}\label{app:proof_of_federer}
We refer the reader to \cite[Lemma 9.4]{kasai_tonegawa} for a detailed proof of \Cref{lemma:federer}. Since some minor modifications are needed, in this section we sketch the outline of the proof.

Let $U\subset\RR^d$ be open, $I\subset\RR$ be a non-empty interval, $\Gamma$ be a $(m-1)$-dimensional $C^{1,\alpha}$ submanifold of $U$ and let $\Mb\in\BF(U\times I, \Gamma)$. We assume that $\Mb$ satisfies a bound of the form
\begin{equation}\label{eq:bound_ultimo}
\mdr(\Mb,U\times I)\le E_1<\infty
\end{equation}
and we let $\Sigma=\Sigma_\Mb$ be its space-time support.

Before proceeding, by virtue of \Cref{cout_lemma}, we fix small constants $c_1$, $c_2$ and $R_0$, depending on $E_1$ and $\Gamma$, such that for every $(x,t)\in \Sigma$ and every $R\le R_0$ such that $B_R(x)\times(t-c_1R^2,t)\compact U\times I$, it holds
\begin{equation}\notag
	M_{t-c_1R^2}(B_{R/2}(x))\ge c_2 R^m.
\end{equation}

By \Cref{def:bf}, for almost every $t\in I$ there exist a $m$-dimensional rectifiable set $E\subset U$ and a positive, integer valued function $\theta_t:E_t\to\Nb$ such that $M_{t}=\theta_t(\cdot)\haus^m\rest E_t$
We choose a time $t$ as above, with the additional condition that $s\mapsto M_s(\phi)$ is continuous at $t$ for every $\phi\in C_c(U)$. By \cite[Proposition 3.3]{Tonegawabook}, almost every $t\in I$ satisfies the latter condition.

We claim that, for every such $t$ and for every $B_{3r}(x_0)\compact U$, it holds
\begin{equation}\label{eq:tonegawa_tesi}
\haus^m((\Sigma_t\setminus E_{t})\cap B_r(x_0))=0;
\end{equation}
this clearly implies \eqref{federer_tesi}.

In order to prove \eqref{eq:tonegawa_tesi}, we argue by contradiction. Assume that there is $(x_0,t_0)\in U\times I$ and $r>0$ such that $B_{3r}\compact U$ and $\haus^m(A\cap B_r(x_0))>0$. Without loss of generality, we may take $x_0=0$, $t=0$ and set $A:=\Sigma_0\setminus E_0$.

Let
\begin{equation}\notag
	A_k := \big\{x\in A\cap B_r\colon M_0(B_R(x))\le c_2{R^m}/{2}\mbox{ for all }R\in(0,{r}/{k})\big\}.
\end{equation}
Since, for $\haus^m$-a.e. $x\in E_0^\compl$, it holds
\begin{equation*}
	\lim_{R\searrow 0}\frac{M_0(B_R(x))}{R^m}=0,
\end{equation*}
we have
\begin{equation*}
	0<\haus^m(A\cap B_r)=\haus^m(\bigcup_{k\in\Nb}A_k).
\end{equation*}
Therefore we may find $k\in\Nb$ such that $b_0:=\haus^m(A_k)>0$.

By standard measure-theoretic arguments, it is not hard to show that there exists $c$ small universal such that, for every $R$ small enough, we may find $N\in\Nb$ and a finite collection of points $\{x_j\}_{j=1}^N\subset A_k$ such that $\{B_R(x_j)\}$ are mutually disjoint and
\begin{equation}\label{eq:stima_strana}
NR^m\ge cb_0.
\end{equation}

By definition of $A$, since $x_j\in A_k$, we have
\begin{equation}\label{eq:poca_massa_in0}
M_0\bigg(\bigcup_{j=1}^NB_R(x_j)\bigg)\le N c_2\frac{R^m}{2}.
\end{equation}
On the other hand, by \Cref{cout_lemma} and the fact that $x_j\in \Sigma_0$, we have
\begin{equation}\label{eq:tanta_massa_prima}
M_{-c_1R^2}\bigg(\bigcup_{j=1}^NB_{R/2}(x_j)\bigg)\ge N c_2 R^m.
\end{equation}

We now fix a cut-off function $\phi\in C^\infty_c(B_1)$ such that $\phi\in[0,1]$ everywhere, $\phi|_{B_{1/2}}\equiv1$ and $|\nabla\phi|\le 4$. Then, given $R$ small, we let $\phi_0(x)=\phi(x/(2r))$, $\phi_j(x)= \phi((x-x_j)/R)$ and
\begin{equation}\notag
\tilde\phi = \phi_0-\sum_{j=1}^N\phi_j.
\end{equation}
Then clearly $\tilde\phi\in[0,1]$ everywhere and $|\nabla\tilde\phi|\le C/R$. Notice, moreover, that
\begin{equation}\notag
\sum_{j=1}^N\chi_{B_{R/2}(x_j)}\le\sum_{j=1}^N\phi_j\le\sum_{j=1}^N\chi_{B_R(x_j)}
\end{equation}
For brevity, set $s=-c_1R^2$. By \eqref{eq:poca_massa_in0} and \eqref{eq:tanta_massa_prima}, we have
\begin{align}
M_0(\phi_0)-M_s(\phi_0)&= \big(M_0(\tilde\phi)-M_s(\tilde\phi) \big)+ \bigg(M_0\big(\sum\phi_j\big)-M_s\big(\sum\phi_j\big)\bigg)\notag\\
&\le\big(M_0(\tilde\phi)-M_s(\tilde\phi) \big) + (Nc_2R^m/2-Nc_2R^m)\notag\\
&\le \big(M_0(\tilde\phi)-M_s(\tilde\phi) \big) - c_3 b_0\label{eq:prosegui0}
\end{align}
for some $c_3$ small, where \eqref{eq:stima_strana} was used in the last inequality.

We now estimate, by using \Cref{def:bf},
\begin{align}
M_0(\tilde\phi)-M_s(\tilde\phi)&\le \int_{s}^0\int H\cdot\nabla\tilde\phi\,dM_t\,dt\notag\\
&\le \bigg(\int_{s}^0\int_{B_{2r}} |H|^2\bigg)^{1/2}\bigg(\int_{s}^0\int|\nabla\tilde\phi|^2\bigg)^{1/2}.\label{eq:prosegui}
\end{align}
By \eqref{eq:bound_ultimo} and the fact that $s=-c_1R^2$, we have, for some $C$ large,
\begin{equation}\notag
\int_{s}^0\int|\nabla\tilde\phi|^2\,dM_t\,dt
\le(-s) ||\nabla\tilde\phi||^2_\infty M_t(B_{2r})\le C E_1 r^m,
\end{equation}
therefore \eqref{eq:prosegui0} and \eqref{eq:prosegui} yield
\begin{equation}\notag
\left(\int_{-c_2R^2}^0\int_{B_{2r}} |H|^2\,dM_t\,dt\right)^\frac{1}{2}\ge\left(\frac{1}{CE_1r^m}\right)^\frac{1}{2}\bigg(M_0(\phi_0)-M_{-c_1R^2}(\phi_0) +c_3 b_0\bigg).
\end{equation}
By assumption, $t\mapsto M_t(\phi_0)$ is continuous at $0$. Thus we may choose $R$ so small that the right-hand side of the above inequality is larger than $c\frac{b_0}{(E_1r^m)^{1/2}}$ for some $c$ small enough. 

Finally, we consider the function $\hat\phi= \phi(x/(3r))$. By \Cref{def:bf} and the Cauchy-Schwarz inequality, we have
\begin{align*}
M_0(\hat\phi)-M_{-c_1R^2}(\hat\phi)
&\le \int_{-c_1R^2}^{0}\int_{B_{3r}}\big(-\hat\phi|H|^2+\nabla\hat\phi\cdot H\big)\,dM_t\,dt\\
&\le -\int_{-c_1R^2}^{0}\int_{B_{3r}}\frac{1}{2}\hat\phi|H|^2\,dM_t\,dt+\int_{-c_1R^2}^{0}\int_{B_{3r}}\frac{|\nabla\hat\phi|^2}{2\hat\phi}\,dM_t\,dt\\
&\le -\frac{1}{2}\int_{-c_1R^2}^{0}\int_{B_{2r}}|H|^2\,dM_t\,dt + CE_1r^mR^2\\
&\le -c\frac{b_0^2}{E_1r^m}
\end{align*}
provided $R$ is chosen small enough. This contradicts the continuity of $t\mapsto M_t(\hat\phi)$ at $0$, thus concluding the proof.

\printbibliography
\end{document}